\newtheorem{theorem}{Theorem}
\newtheorem{lemma}{Lemma}
\def\wt#1{\widetilde{#1}}
\def\ol#1{\overline{#1}}
\def\red#1{\textcolor{red}{#1}}
\def\blue#1{\textcolor{blue}{#1}}
\def\mb#1{\mathbf{#1}}
\newcommand{\mbs}[1]{\bm{\mathsf{ #1}}}
\def\fnto#1{\overline{\mbs{#1}}}
\def\fntb#1{\mbs{#1}}
\def\fnts#1{\boldsymbol{#1}}
\def\paren#1{\left({#1}\right)}
\def\DEF{:=}
\newcommand{\mat}[2] [ccccccccc]{ \left( \begin{array}{#1} #2\\ \end{array}\right) }
\newcommand{\eval}[2]{\biggr|_{#1}^{#2}}
\newcommand{\MAT}[9]{
\left(\begin{array}{|cc|c|cc|c}
	 \multicolumn{1}{c}{#1} \\ \cline{1-3}
	 #2 & \multicolumn{1}{c}{#3} &  \\ 
	 #4 & \multicolumn{1}{c}{#5} & #1 \\ \cline{1-5}
	  \multicolumn{1}{c}{}&& \multicolumn{1}{c}{#6}& #7 & \\
	  \multicolumn{1}{c}{}&& \multicolumn{1}{c}{#8}& #9 & #1 \\ \cline{3-5}
	  \multicolumn{1}{c}{}&\multicolumn{1}{c}{}&\multicolumn{1}{c}{} & & \multicolumn{1}{c}{} & \ddots
\end{array}\right)
}
\newcommand{\MATT}[9]{
\left(\begin{array}{c|cc|c|cc|}
	\multicolumn{1}{c}{\ddots} \\  \cline{2-4}
	& #1 & \multicolumn{1}{c}{#2} & #3 \\ 
	&      & \multicolumn{1}{c}{#4} & #5 \\ \cline{2-6}
	\multicolumn{1}{c}{}&&& \multicolumn{1}{c}{#1} &#6 &#7\\ 
	\multicolumn{1}{c}{}&&& \multicolumn{1}{c}{}&#8 &#9 \\ \cline{4-6}
	\multicolumn{1}{c}{}&&\multicolumn{1}{c}{}& \multicolumn{1}{c}{}&\multicolumn{1}{c}{} &\multicolumn{1}{c}{#1} 
\end{array}\right)}
\journal{Journal of XXXX}
\begin{document}

\begin{frontmatter}

\title{Discrete adjoint computations for relaxation Runge-Kutta methods}

\author{Mario J. Bencomo\corref{mycorrespondingauthor}}
\cortext[mycorrespondingauthor]{Corresponding author}
\ead{mjb6@rice.edu}

\author{Jesse Chan}
\address{Rice University, Houston TX}

\begin{abstract}

Relaxation Runge-Kutta methods reproduce a fully discrete dissipation (or conservation) of entropy for entropy stable semi-discretizations of nonlinear conservation laws. In this paper, we derive the discrete adjoint of relaxation Runge-Kutta schemes, which are applicable to discretize-then-optimize approaches for optimal control problems. Furthermore, we prove that the derived discrete relaxation Runge-Kutta adjoint preserves time-symmetry when applied to linear skew-symmetric systems of ODEs. Numerical experiments verify these theoretical results while demonstrating the importance of appropriately treating the relaxation parameter when computing the discrete adjoint.


\end{abstract}

\begin{keyword}
	relaxation Runge-Kutta method \sep 
	discrete adjoint\sep 
	time-symmetry\sep
	entropy conservation\sep
	entropy stability
\end{keyword}

\end{frontmatter}


\section{Introduction}

The \emph{relaxation Runge-Kutta} method was first introduced by \cite{Ketcheson_2019,Ranocha_2020a} for stability of time discretizations of ordinary differential equations (ODEs) with respect to a given inner-product norm, or in more general a convex entropy functional.
Recently, this relaxation approach has been generalized to multistep time integrators, \cite{Ranocha_2020b}.
We are interested in the application of these relaxation methods to optimal control problems, in hopes of leveraging stability properties especially for entropy stable semi-discretizations of nonlinear partial differential equations (PDEs).

In this paper we present novel adjoint computations and properties of the relaxation Runge-Kutta method.
The discrete linearization and adjoint of the relaxation Runge-Kutta method is derived by using a matrix representation, similar to \cite{Rothauge_2016}, and by using implicit differentiation in order to address the dependency of the relaxation parameter on the solution at current time steps.
Numerical experiments presented here highlight the importance of proper linearization.
We also prove, and demonstrate numerically, time-symmetry properties of the relaxation Runge-Kutta method when applied to linear skew-symmetric ODE systems, for general explicit and diagonally implicit Runge-Kutta methods.


Adjoint computations, by which we mean computations associated with the adjoint state method, are an efficient way to compute gradients of objective functionals in PDE-constrained optimization problems; see \cite{Gunzburger_2002,Antil_2018} for an overview of optimal control problems and the adjoint state method.
There are two main approaches associated with adjoint computations: one can either derive the adjoint state equations for the continuous problem and then discretize (referred to as the \textit{optimize-then-discretize} approach) or alternatively discretize the state equations first and then compute the adjoint (referred to as the \textit{discretize-then-optimize}).
The advent of \textit{automatic/algorithmic differentiation} (AD) has accelerated the advancement and utility of PDE-constrained optimization by essentially automating the adjoint state method and computation of sensitivities in a discretize-then-optimize approach, \cite{Griewank_2008,Griewank_2003}.
However, regardless of the convenience of AD, one must exercise caution since a discretize-then-optimize approach may produce a discretization that is inconsistent with the continuous optimization problem, e.g., \cite{Sirkes_1997}.

Issues with the discretize-then-optimize approach are dependent on the choice of discretization of the state equations, and much research has gone into analyzing this approach for different numerical schemes. 
Previous work on the discrete adjoint of Runge-Kutta methods showed that a discretize-then-optimize approach is indeed consistent, and moreover, that the adjoint of a Runge-Kutta method is yet another Runge-Kutta scheme of same order, \cite{Hager_2000,Walther_2007,Sandu_2006}.
In \cite{Serna_2016}, the author examines the links between symplectic Runge-Kutta methods and applications into the computation of sensitivities and adjoints.
See also \cite{Griewank_1993} for a more general paper on the discrete differentiation and convergence of iterative solvers.

The relaxation Runge-Kutta method can be viewed as an adaptive time step method, making the step-size dependent on the solution at previous time steps.
Previous work related to discrete adjoints of generic adaptive time stepping methods argues that taking the variable step-size into account in the linearization produces ``non-physical'' effects in the sensitivity and adjoint computations, \cite{Eberhard_1999,Alexe_2009}.
The authors also argue that the resulting discretize-then-optimize approach is inconsistent.
In this paper, however, the opposite is true, and a proper linearization of relaxation Runge-Kutta methods is not only consistent but also necessary for accuracy.


The paper is outlined as follows: 
In section~\ref{ssec:RK}, we present some notation, along with the standard Runge-Kutta method, as well as its discrete linearization and adjoint.
Next, in section~\ref{ssec:RRK}, we discuss the relaxation Runge-Kutta method, derive its discrete linearization and adjoint, and discuss its time-symmetry property.
In section~\ref{sec:Results}, the numerical experiments and results section, we demonstrate the importance of proper linearization for relaxation Runge-Kutta methods.
We also verify the time-symmetry property of these relaxation methods on a skew-symmetric linear problem.
Results are summarized in the conclusion, section~\ref{sec:Conclusion}.
Detailed derivations of discrete linearization and adjoint formulas are given in the appendix.

\section{Theory}\label{sec:theory}

Consider the following optimal control problem:
\begin{subequations}\label{eq:optcont}
\begin{align}
	\min_{\mb u} & \;\; \mathcal C(\mb y, \mb u)\\
	\text{s.t.} &\;\; \mathcal E(\mb y,\mb u) =  0 \label{eq:state}
\end{align}
\end{subequations}
where $\mb y$ and $\mb u$ denote the vectors of state and control variables respectively. 
The state equation \ref{eq:state} specifies a system of first-order initial value problem (IVP), potentially the semi-discretization of some PDE, of the form:
\begin{subequations}\label{eq:IVP}
\begin{align}
	\mb y'(t) &= \mb f(\mb y,\mb u, t), \quad 0 < t \le T\\
	\mb y(0) &= \mb y_{\rm init}(\mb u),
\end{align}
\end{subequations}
with $\mb y, \mb y_{\rm init}, \mb f(\mb y,\mb u,t)\in \mathbb R^N$.
Following a discretize-then-optimize approach, the continuous optimal control problem \ref{eq:optcont} is replaced by the following discrete optimization problem:
\begin{subequations}\label{eq:optdisc}
\begin{align}
	\min_{\fntb u} & \;\; \mathsf C(\fntb y, \fntb u) \\
	\text{s.t.} & \;\; \fntb E(\fntb y, \fntb u) = \fntb 0 \label{eq:stated}
\end{align}
\end{subequations}
where $\mathsf C$ and $\fntb E$ denote the discretized cost and state-equation operators respectively. 
Throughout this paper, we will use Sans Serif font to denote discretized quantities.
The equality constraint \ref{eq:stated} corresponds to the discretization of IVP \ref{eq:IVP} by some time stepping scheme, which in turn informs the discretization of the state and control vectors; we give more details in section \ref{ssec:RK} when discussing the Runge-Kutta method.

If the mapping $\fntb y \mapsto \fntb E(\fntb y,\fntb u)$ is invertible, then we can use the equality constraint \ref{eq:stated} to express the state variable as a function of the control variable.
In other words,
\[
	\fntb y = \fntb y(\fntb u) \DEF \fntb E^{-1}(\fntb 0,\fntb u).
\]
which allows us to reformulate \ref{eq:optdisc} as an uncontrained optimization problem with \textit{reduced} cost function
\[
	\wt{\mathsf C}(\fntb u) \DEF \mathsf C(\fntb y(\fntb u),\fntb u).
\]
Using implicit differentiation, one can show that the gradient of the reduced cost function is given by
\begin{equation}\label{eq:gradc}
	\nabla \wt{\mathsf C}(\fntb u) =
	\nabla_{\fntb u} \mathsf C(\fntb y, \fntb u)
	- \left( \frac{\partial \fntb E}{\partial \fntb u}(\fntb y,\fntb u) \right)^\top \fnts \lambda
\end{equation}
where $\fntb y$ must satisfy the state equation \ref{eq:stated}, while the adjoint-state (or co-state) vector $\fnts \lambda$ satisfies what is known as the \textit{adjoint equation},
\begin{equation} \label{eq:adjeq}
	\left( \frac{\partial \fntb E}{\partial \fntb y}(\fntb y,\fntb u) \right)^\top \fnts \lambda  = \nabla_{\fntb y} \mathsf C(\fntb y,\fntb u).
\end{equation}
Equations \ref{eq:gradc} and \ref{eq:adjeq} can also be derived from standard optimization theory via Lagrange multipliers.

Equations \ref{eq:gradc} and \ref{eq:adjeq} show that gradient computations of the cost function hinge on the linearization and adjoint of the state-equation operator, i.e., the choice of time integrator.
Before we derive the linearization and adjoint of the relaxation Runge-Kutta method, we present the well understood updates for standard Runge-Kutta.
For the majority of the paper, we drop the control vector $\fntb u$ and simply focus on the state-equation operator $\fntb E(\fntb y)$  (its linearization and adjoint) associated with discretizations of IVP \ref{eq:IVP}.

\subsection{Linearizations and adjoints of Runge-Kutta methods}
\label{ssec:RK}

A generic $s$-stage \textit{Runge-Kutta} (RK) method, specified by its coefficients
\[
	\fntb A_s \DEF \mat{a_{11}, & a_{12} & \cdots & a_{1s} \\
				      a_{21}, & a_{22} & \cdots & a_{2s} \\
				      \vdots  & \vdots   & \vdots & \vdots \\
				      a_{s1} & a_{s2} & \cdots & a_{ss}}, \quad 
	\fntb b_s \DEF \mat{b_1\\ b_2\\ \vdots \\ b_s}, \quad
	\fntb c_s \DEF \mat{c_1 \\ c_2\\ \vdots \\ c_s},
\]
applied to IVP \ref{eq:IVP}, yields the following time-stepping formulas:
\begin{subequations}\label{eq:RK}
\begin{align}
	\fntb y_{k} &= \fntb y_{k-1} + \Delta t \sum_{i=1}^s b_i \fntb F_{k,i}, \label{eq:RKstep}\\
	\fntb Y_{k,i} &= \fntb y_{k-1} + \Delta t \sum_{j=1}^{s}a_{ij} \fntb F_{k,j},\quad \text{for} \quad i=1,...,s, \label{eq:RKint}
\end{align}
\end{subequations}
where
\[
	\fntb F_{k,i} \DEF \mb f(\fntb Y_{k,i}, t_{k-1} + c_i\Delta t).
\]
and $\fntb y_k$ is an approximation to the solution at time $t_k = t_{k-1} + \Delta t$.
The $\fntb Y_{k,i}$ are referred to as the RK internal stages.
Assuming the RK method takes a total of $K$ steps, we define vector $\fnto y$ to be the concatenation of all $\fntb y_k$ and $\fntb Y_{k,i}$.
Let
\[
	\fntb Y_k \DEF \mat{\fntb Y_{k,1} \\ \vdots \\ \fntb Y_{k,s}} \in \mathbb R^{sN},
\]
then
\begin{equation}\label{eq:ybar}
	\fnto y \DEF \mat{\fntb y_0 \\ \fntb Y_1 \\ \fntb y_1 \\ \vdots \\ \fntb Y_K \\ \fntb y_K} \in \mathbb R^{\ol N}
\end{equation}
where $\ol N \DEF N + N(s+1)K$.
Throughout the remainder of this paper we will use an overline and bold Sans Serif font to denote vectors of dimension $\ol N$, whose components are grouped and denoted in a similar fashion to what was presented for $\fnto y$ in equation \ref{eq:ybar}.
For example, $\fnto w\in\mathbb R^{\ol N}$ is interpreted as
\[
	\fnto w = \mat{\fntb w_0 \\ \fntb W_1 \\ \fntb w_1 \\ \vdots \\ \fntb W_{K}\\ \fntb w_K}
\]
with $\fntb w_k\in \mathbb R^{N}$ and
\[
	\fntb W_k = \mat{\fntb W_{k,1} \\ \vdots \\ \fntb W_{k,s}} \in \mathbb R^{sN}.
\]

It can be shown that the discrete state equation \ref{eq:stated}, associated with an RK discretization, is of the form
\begin{equation}\label{eq:RKeq}
	\fntb E(\fnto y) \DEF \fntb L \fnto y - \fntb N(\fnto y) - \fnts \chi_0 \fntb y_{\rm init} = \fnto 0
\end{equation}
where $\fntb L$ is a lower unit-triangular matrix, and $\fntb N$ is a block lower-triangular (potentially nonlinear) operator acting on $\fnto y$ that involves the evaluation of the right-hand-side function $\mb f$ at internal stages.
The matrix $\fntb \chi_k\in \mathbb R^{\ol N\times N}$ is defined such that $\fnts \chi_k^\top \fnto y = \fntb y_k$.
Thus, for $\fntb v\in\mathbb R^N$, we have that $\fnto v = \fnts \chi_k \fntb v$ is a vector of length $\ol N$ with $\fntb v_k = \fntb v$ and zeros everywhere else. 
In other words, the term $\fnts \chi_0 \fntb y_{\rm init}$ accounts for the initial condition.
We refer to the appendix for more details concerning this ``matrix representation'' of the RK method, which borrows notation from \cite{Rothauge_2016}, and is used to derive the time-stepping formulas for the discrete linearization and adjoint of RK and its relaxation variant.
Note that the inverse mapping of $\fntb E$ is guaranteed to exist (barring any issues from the potentially nonlinear function $\mb f$) since $\fntb E$ will be a block lower-triangular operator.
In fact, time-stepping formulas \ref{eq:RK} specify this inverse map, solving the discrete state equation by forward substitution.

We provide the linearization of standard RK formulas \ref{eq:RK} as a point of reference for our discussion of adjoint computations for relaxation RK, which are derived by computing the Jacobian of the discrete state operator $\fntb E(\fnto y)$ in equation \ref{eq:RKeq} with respect to $\fnto y$.
The linearized RK time-stepping formulas are
\begin{subequations}\label{eq:linRK}
\begin{align}
	\fnts \delta_{k} &= \fnts \delta_{k-1} + \Delta t \sum_{i=1}^s b_i \fntb J_{k,i} \fnts \Delta_{k,i} + \fntb w_{k} \label{eq:linRKstep}\\
	\fnts \Delta_{k,i} &= \fnts \delta_{k-1} + \Delta t \sum_{j=1}^s a_{ij} \fntb J_{k,j} \fnts \Delta_{k,j} + \fntb W_{k,i}, \quad 1,...,s, \label{eq:linRKint}
\end{align}
\end{subequations}
where
\[
	\fntb J_{k,i} \DEF \frac{\partial \mb f}{\partial \mb y}(\fntb Y_{k,i}, t_{k-1}+c_i\Delta t).
\]
We refer to $\fnts \delta_k$ and $\fnts \Delta_k$ as the \emph{linearized RK approximation and linearized RK internal stages}, respectively.
These equations represent the solution to the following block lower-triangular system via forward substitution,
\[
	\fntb E'(\fnto y) \fnto \delta = \fnto w
\]
for some given right-hand-side vector $\fnto w\in\mathbb R^{\ol N}$, which can be used to incorporate initial conditions and/or source terms.
Note that  $\fntb J_{k,i}$ is the Jacobian matrix of $\mb f$ evaluated at the $i$-th internal stage $\fntb Y_{k,i}$.
In the case that $\mb f(\mb y(t),t)$ is linear in $\mb y$ we can expect $\fntb E'(\fnto y)\fnto y = \fntb E(\fnto y)$.

The adjoint RK formulas follow from solving
\[
	\fntb E'(\fnto y)^\top \fnto \lambda = \fnto w,
\] 
a block upper-triangular system, by back substitution:
\begin{subequations}\label{eq:adjRK}
\begin{align}
	\fnts \lambda_{k-1} &= \fnts \lambda_k + \sum_{i=1}^s \fnts \Lambda_{k,i} + \fntb w_{k-1}, \label{eq:adjRKstep}\\
	 \fnts \Lambda_{k,i} &= \Delta t \, \fntb J_{k,i}^\top \Big( b_i \fnts \lambda_{k} + \sum_{j=1}^s a_{ji} \fnts \Lambda_{k,j}\Big) + \fntb W_{k,i}, \quad i=1,...,s.\label{eq:adjRKint}
\end{align}
\end{subequations}
In the adjoint formulas \ref{eq:adjRK}, right-hand-side vector $\fnto w\in\mathbb R^{\ol N}$ incorporates source terms as well as a final time condition.
We refer to $\fnts \lambda_k$ and $\fnts \Lambda_k$ as the \textit{adjoint RK approximation} and \textit{adjoint RK internal stages}, respectively.
Note that the adjoint update formula uses $\fnts \lambda_{k}$ to update $\fnts \lambda_{k-1}$, in other words, we are marching backwards in time.
We also point out that this presentation of the adjoint RK method is based on the derivation from the matrix representation, unlike other presentations that reformulate the equations above to resemble an RK method; see \cite{Hager_2000,Sandu_2006}.

\subsection{Relaxation RK methods}
\label{ssec:RRK}

Let $\eta:\mathbb R^N \to \mathbb R$ denote the {\em entropy} function (smooth and convex) associated with IVP \ref{eq:IVP}, where the time evolution of the entropy is given by
\[
	\frac{d\eta}{dt}(\mb y(t)) = \nabla\eta(\mb y(t))^\top \, \mb f(\mb y(t),t).
\]
IVP \ref{eq:IVP} is said to be {\em entropy dissipative} if
\[
	\nabla\eta(\mb y(t))^\top \, \mb f(\mb y(t),t) \le 0
\]
or {\em entropy conservative} if 
\[
	\nabla \eta(\mb y(t))^\top\, \mb f(\mb y(t),t) = 0.
\]
In the discrete setting, we wish to ensure
\[
\begin{split}
	\eta(\fntb y_{k+1}) \le \eta(\fntb y_k),&\quad \text{(for entropy dissipative)},\\
	\text{or} \quad \eta(\fntb y_{k}) = \eta(\fntb y_0),& \quad \text{(for entropy conservative)}.
\end{split}
\]

The relaxation RK method achieves discrete entropy conservation/dissipation by modifying the update step as follows:
\begin{equation}\label{eq:RRKstep}
	\fntb y_{k} = \fntb y_{k-1} + \gamma_k \Delta t \sum_{i=1}^s b_i \fntb F_{k,i}
\end{equation}
where $\gamma_k$, the \emph{relaxation parameter}, is the non-zero root (closest to one) of the nonlinear scalar function $r_k(\,\cdot\,;\fnto y)$,
\begin{equation}\label{eq:root}
	r_k(\gamma;\fnto y) \DEF \eta\paren{\fntb y_{k-1} + \gamma \Delta t \sum_{i=1}^s b_i \fntb F_{k,i}} - \eta(\fntb y_{k-1}) - \gamma \Delta t \sum_{i=1}^s b_i \nabla\eta(\fntb Y_{k,i})^\top \fntb F_{k,i}.
\end{equation}
Internal stages $\fntb Y_{k,i}$ are calculated as in equation \ref{eq:RKint}.
After computing $\gamma_k$, there are two options for determining the solution at the next time step:
\begin{enumerate}
	\item[i.] {\em Incremental direction technique} (IDT) method: $\fntb y_{k} \approx \mb y(t_{k-1}+\Delta t)$
	\item[ii.] {\em Relaxation RK} (RRK) method: $\fntb y_{k} \approx \mb y(t_{k-1}+\gamma_k \Delta t)$
\end{enumerate}
Implementation wise, both IDT and RRK require the solution to a scalar root problem at each time step, though RRK resembles more of an adaptive time-stepping scheme given that $t_{k}= t_{k-1} + \gamma_k \Delta t$ at the moment the RRK approximation is updated.
In terms of accuracy, it was shown in \cite{Ranocha_2020a} that RRK methods preserve accuracy of the underlying RK scheme.
On the other hand, IDT schemes are order $p-1$ accurate for an underlying method of order $p$.
For the purposes of a clean presentation, we first derive the linearized and adjoint formulas for the simpler IDT method, and then extend to RRK with some minor modifications.

\subsubsection{Discrete linearization and adjoint of IDT}

The state-equation operator, $\fntb E$, associated with IDT is modified in accordance to equation \ref{eq:RRKstep} by adding a dependency on the relaxation parameter vector $\fnts \gamma \DEF (\gamma_1,\dots, \gamma_{K})^\top\in\mathbb R^{K}$.
Assuming IDT takes $K$ steps, where $K$ is the number of steps taken by the underlying RK method, we have
\[
	\fntb E(\fnto y,\fnts \gamma) \DEF \fntb L \fnto y - \fntb N(\fnto y,\fnts \gamma) - \fnts\chi_0 \fntb y_{\rm init}.
\]
The relaxation parameter $\gamma_k$ is defined by the solution to a root equation at each time step.
This root equation can be written in vector form as
\[
	\fntb r(\fnts \gamma; \fnto y) 
	\DEF \mat{r_1(\gamma_1; \fnto y)\\ r_2(\gamma_2; \fnto y)\\ \vdots}
	= \mat{0 \\ 0 \\ \vdots}.
\]
Note that the equation above implicitly defines the relaxation parameter vector as a function of $\fnto y$, i.e., $\fnts \gamma = \fnts \gamma(\fnto y)$.
Using this, we denote the \textit{reduced} state-equation operator by 
\[
	\wt{\fntb E}(\fnto y)\DEF \fntb E(\fnto y, \fnts\gamma(\fnto y)).
\]

A proper linearization of the IDT method will require the Jacobian of the reduced state-equation operator,
\[
	\wt{\fntb E}'(\fnto y) = 
	\frac{\partial \fntb E}{\partial \fnto y}(\fnto y, \fnts\gamma(\fnto y)) 
	+ \frac{\partial \fntb E}{\partial \fnts \gamma}(\fnto y,\fnts\gamma(\fnto y))\, \fnts\gamma'(\fnto y).
\]
Unfortunately, one cannot directly compute $\fnts \gamma'$ since the relaxation parameters are computed numerically using some iterative root-solving algorithm.
One could bypass this issue by simply taking $\partial\fntb E/\partial\fnto y$ as the Jacobian, ignoring the second term above, essentially viewing the relaxation parameter as a constant in the linearization, as suggested by \cite{Eberhard_1999,Alexe_2009}.
This approach would result in linearized and adjoint time-stepping formulas that are almost identical to RK, equations \ref{eq:linRKstep} and \ref{eq:adjRKstep} (and equation \ref{eq:adjRKint} for the adjoint internal stages), but with weights $\fntb b_s \mapsto \gamma_k\fntb b_s$.
We will show in our numerical results that ignoring the relaxation parameter will have negative consequences.

For a proper linearization, we compute $\fnts \gamma'$ via implicit differentiation.
Note that $\gamma_k$ is dependent on $\fntb y_{k-1}$ and $\fntb Y_k$ only, thus it suffices to compute partial derivatives with respect to these variables.
In particular,
\begin{subequations}\label{eq:gradgamma}
\begin{align}
	(\nabla_y \gamma_k)^\top \DEF 
	\frac{\partial \gamma_k}{\partial \fntb y_{k-1}}(\fnto y)
		&= -\paren{\frac{\partial r_k}{\partial \gamma}}^{-1} \frac{\partial r_k}{\partial\fntb y_{k-1}}\eval{(\fnts\gamma(\fnto y);\fnto y)}{} \in \mathbb R^{1\times N}\\
	(\nabla_Y \gamma_{k,i})^\top \DEF 
	\frac{\partial \gamma_k}{\partial \fntb Y_{k,i}}(\fnto y)
		&= -\paren{\frac{\partial r_k}{\partial \gamma}}^{-1} \frac{\partial r_k}{\partial\fntb Y_{k,i}}\eval{(\fnts\gamma(\fnto y);\fnto y)}{} \in \mathbb R^{1\times N},\\
	(\nabla_Y \gamma_{k})^\top &= \Big( (\nabla_Y \gamma_{k,1})^\top, ..., (\nabla_Y \gamma_{k,s})^\top \Big) \in \mathbb R^{1\times sN},
\end{align}
\end{subequations}
where $r_k$, again, is defined in \ref{eq:root} and
\begin{subequations}\label{eq:dr}
\begin{align}
	\frac{\partial r_k}{\partial \gamma}(\fnts\gamma(\fnto y);\fnto y)
		&= \Delta t \sum_{i=1}^s b_i \Big( \nabla\eta(\fntb y_{k}) - \nabla\eta(\fntb Y_{k,i}) \Big)^\top \fntb F_{k,i}\\
	\frac{\partial r_k}{\partial \fntb y_{k-1}}(\fnts\gamma(\fnto y);\fnto y)
		&= \nabla\eta(\fntb y_{k})^\top - \nabla\eta(\fntb y_{k-1})^\top\\
	\frac{\partial r_k}{\partial \fntb Y_{k,j}}(\fnts\gamma(\fnto y);\fnto y)
		&= \gamma_k b_j \Delta t \left\{ \Big( \nabla\eta(\fntb y_{k})-\nabla\eta(\fntb Y_{k,j}) \Big)^\top \fntb J_{k,j} - \fntb F_{k,j}^\top \nabla^2\eta(\fntb Y_{k,j}) \right\}, \quad j=1,...,s.
\end{align}
\end{subequations}
We provide the remaining details in the appendix, and simply state the resulting time-stepping formulas in the following lemmas.

\begin{lemma}\label{thm:linIDT}
	The linearized IDT time-stepping formulas are
	\begin{equation}\label{eq:linIDTstep}
		\fnts \delta_{k} = \fnts \delta_{k-1} + \gamma_k \Delta t \sum_{i=1}^s b_i \fntb J_{k,i} \fnts \Delta_{k,i}
			\red{\,+\, \rho_k\; \Delta t \sum_{i=1}^s b_i \fntb F_{k,i}} + \fntb w_{k}
	\end{equation}
	where
	\[
		\red{\rho_k =  (\nabla_{y}\gamma_k)^\top \fnts \delta_{k-1} + (\nabla_{Y}\gamma_k)^\top \fnts \Delta_k}
	\]
	and the computation of the internal stages $\fnts \Delta_k$ is the same as for RK (see equation \ref{eq:linRKint}).
\end{lemma}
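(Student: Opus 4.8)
The plan is to differentiate the reduced state-equation operator $\wt{\fntb E}(\fnto y) = \fntb E(\fnto y, \fnts\gamma(\fnto y))$ by the chain rule, confirm that its Jacobian is block lower-triangular, and then read off the block row corresponding to the update step \ref{eq:RRKstep}. A useful simplification to record first: in IDT the internal stages $\fntb Y_{k,i}$ are computed exactly as in plain RK (equation \ref{eq:RKint}) and carry no dependence on $\gamma_k$. Hence their linearization is untouched and coincides with equation \ref{eq:linRKint}, which immediately discharges the second assertion of the lemma. The real work therefore concentrates on the update row alone.

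Next I would write $\wt{\fntb E}'(\fnto y) = \partial\fntb E/\partial\fnto y + (\partial\fntb E/\partial\fnts\gamma)\,\fnts\gamma'(\fnto y)$ and treat the two terms separately. The first term reproduces the ordinary RK linearization with the weights rescaled as $b_i \mapsto \gamma_k b_i$, since the only modification of the explicit $\fnto y$-dependence relative to RK is the constant factor $\gamma_k$ multiplying the stage sum in \ref{eq:RRKstep}. Reading off its update row yields exactly $\fnts\delta_{k-1} + \gamma_k \Delta t \sum_i b_i \fntb J_{k,i}\fnts\Delta_{k,i} + \fntb w_{k}$, where $\fntb w_{k}$ is the corresponding block of the right-hand side of $\wt{\fntb E}'(\fnto y)\fnto\delta = \fnto w$.

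The new ingredient is the second term $(\partial\fntb E/\partial\fnts\gamma)\,\fnts\gamma'$. Differentiating \ref{eq:RRKstep} with respect to $\gamma_k$ produces the vector $\Delta t \sum_i b_i \fntb F_{k,i}$, scaled by the total variation $\delta\gamma_k$. Because the root equation \ref{eq:root} makes $\gamma_k$ depend on $\fnto y$ only through $\fntb y_{k-1}$ and the stage block $\fntb Y_k$, I would expand $\delta\gamma_k$ using the implicit derivatives already assembled in \ref{eq:gradgamma}, identifying the variations $\delta\fntb y_{k-1}\mapsto\fnts\delta_{k-1}$ and $\delta\fntb Y_k\mapsto\fnts\Delta_k$:
\[
	\rho_k \DEF \delta\gamma_k = (\nabla_y\gamma_k)^\top \fnts\delta_{k-1} + (\nabla_Y\gamma_k)^\top\fnts\Delta_k.
\]
Substituting this scalar back gives the red contribution $\rho_k\,\Delta t \sum_i b_i \fntb F_{k,i}$ and completes equation \ref{eq:linIDTstep}.

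The one point requiring care — and the main, if modest, obstacle — is verifying that $\wt{\fntb E}'$ remains block lower-triangular, so that forward substitution is legitimate and the formulas may be stated step-by-step in the order written. The concern is that the extra coupling routed through $\fnts\gamma'$ might introduce dependence on future blocks. However, the locality of the relaxation parameter (the function $r_k$ in \ref{eq:root} reads only $\fntb y_{k-1}$ and $\fntb Y_k$) confines the new Jacobian entries to the same block row, in columns already strictly below the diagonal, so the triangular structure is preserved. Once this locality is checked, the causal ordering of the formulas follows and the lemma is established.
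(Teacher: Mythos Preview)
Your proposal is correct and follows essentially the same route as the paper's appendix: apply the chain rule to split $\wt{\fntb E}'(\fnto y)$ into the $\gamma$-constant part (RK linearization with $b_i\mapsto\gamma_k b_i$) and the rank-one correction $(\partial\fntb E/\partial\fnts\gamma)\,\fnts\gamma'$, then invoke the locality of $r_k$ to preserve block lower-triangularity and read off the update row via forward substitution. The paper merely packages the same computation by writing out the explicit block matrices $\fnts\Gamma_{y,k}=\fntb B^\top\fntb F_k(\nabla_y\gamma_k)^\top$ and $\fnts\Gamma_{Y,k}=\fntb B^\top\fntb F_k(\nabla_Y\gamma_k)^\top$ before identifying $\rho_k$, but the substance is identical.
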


\begin{lemma}\label{thm:adjIDT}
	The adjoint IDT time-stepping formulas are
	\begin{equation}\label{eq:adjIDTstep}
		\fnts \lambda_{k-1} = \fnts \lambda_{k} + \sum_{i=1}^s \fnts \Lambda_{k,i} + \red{\xi_{k}\nabla_{y}\gamma_k} +\fntb w_{k-1}
	\end{equation}
	where
	\begin{align}\label{eq:adjIDTint}
			\red{ \xi_{k} } & \red{ = \Delta t \sum_{i=1}^s b_i \fntb F_{k,i}^\top \fnts \lambda_{k} } \nonumber \\ 
			 \fnts \Lambda_{k,i} &= \Delta t\; \fntb J_{k,i}^\top \Big( \gamma_k b_i \fnts \lambda_{k} + \sum_{j=1}^s a_{ji} \fnts \Lambda_{k,j}\Big) \red{\,+\, \xi_{k}  \nabla_{Y}\gamma_{k,i} } + \fntb W_{k,i}.
	\end{align}
\end{lemma}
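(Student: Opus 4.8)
The plan is to obtain the adjoint formulas as the transpose of the forward linearization already established in Lemma~\ref{thm:linIDT}, exploiting the defining duality relation
\[
	\langle \wt{\fntb E}'(\fnto y)\,\fnto \delta,\ \fnto \lambda\rangle = \langle \fnto \delta,\ \wt{\fntb E}'(\fnto y)^\top\fnto \lambda\rangle,
\]
valid for all $\fnto\delta,\fnto\lambda\in\mathbb R^{\ol N}$. Since the adjoint time-stepping formulas are precisely the back-substitution solution of $\wt{\fntb E}'(\fnto y)^\top\fnto\lambda=\fnto w$, it suffices to expand the left-hand side of this identity, collect the result by the blocks $\fnts\delta_k$ and $\fnts\Delta_{k,i}$, read off the corresponding block components of $\wt{\fntb E}'(\fnto y)^\top\fnto\lambda$, and equate them to $\fntb w_{k}$ and $\fntb W_{k,i}$. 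This is equivalent to block-transposing the matrix representation, but organizes the bookkeeping more transparently.

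First I would rewrite Lemma~\ref{thm:linIDT} as the block action $\fnto w = \wt{\fntb E}'(\fnto y)\,\fnto\delta$, namely
\[
	[\wt{\fntb E}'\fnto\delta]_k = \fnts\delta_k - \fnts\delta_{k-1} - \gamma_k\Delta t\sum_i b_i\fntb J_{k,i}\fnts\Delta_{k,i} - \rho_k\,\Delta t\sum_i b_i\fntb F_{k,i},
\]
\[
	[\wt{\fntb E}'\fnto\delta]_{k,i} = \fnts\Delta_{k,i} - \fnts\delta_{k-1} - \Delta t\sum_j a_{ij}\fntb J_{k,j}\fnts\Delta_{k,j},
\]
the latter being identical to standard RK. Substituting these into $\langle \wt{\fntb E}'\fnto\delta,\fnto\lambda\rangle = \sum_k [\wt{\fntb E}'\fnto\delta]_k^\top\fnts\lambda_k + \sum_{k,i}[\wt{\fntb E}'\fnto\delta]_{k,i}^\top\fnts\Lambda_{k,i}$ and expanding the scalar $\rho_k = (\nabla_y\gamma_k)^\top\fnts\delta_{k-1} + \sum_i(\nabla_Y\gamma_{k,i})^\top\fnts\Delta_{k,i}$, the crucial observation is that the transpose of the rank-structured term $-\rho_k\,\Delta t\sum_i b_i\fntb F_{k,i}$ reproduces exactly the scalar $\xi_k = \Delta t\sum_i b_i\fntb F_{k,i}^\top\fnts\lambda_k$, which then redistributes onto the sensitivity gradients $\nabla_y\gamma_k$ and $\nabla_Y\gamma_{k,i}$ through
\[
	-\rho_k\Big(\Delta t\sum_i b_i\fntb F_{k,i}^\top\fnts\lambda_k\Big) = -\fnts\delta_{k-1}^\top(\xi_k\nabla_y\gamma_k) - \sum_i\fnts\Delta_{k,i}^\top(\xi_k\nabla_Y\gamma_{k,i}).
\]

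Next I would collect the coefficient of $\fnts\delta_k^\top$, reindexing $k\mapsto k+1$ in every term carrying $\fnts\delta_{k-1}$; this yields $\fnts\lambda_k - \fnts\lambda_{k+1} - \sum_i\fnts\Lambda_{k+1,i} - \xi_{k+1}\nabla_y\gamma_{k+1}$, which upon equating to $\fntb w_k$ and shifting indices gives the main adjoint update \ref{eq:adjIDTstep}. Collecting instead the coefficient of $\fnts\Delta_{k,i}^\top$—with the standard RK contribution $-\Delta t\,\fntb J_{k,i}^\top\sum_j a_{ji}\fnts\Lambda_{k,j}$ arising from the internal-stage block and $-\gamma_k\Delta t\, b_i\fntb J_{k,i}^\top\fnts\lambda_k$ from the main block—and equating to $\fntb W_{k,i}$ gives the internal-stage formula \ref{eq:adjIDTint}. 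I expect the main obstacle to be the correct transposition of the rank-structured $\rho_k$ coupling: unlike standard RK, where the adjoint is simply the transpose of a fixed block lower-triangular matrix, here the relaxation term injects an outer-product-type coupling $(\Delta t\sum_i b_i\fntb F_{k,i})\,(\nabla\gamma)^\top$ into the main update, and the entire novelty of the lemma lies in recognizing that its transpose converts the forward scalar $\rho_k$ (built from $\fnts\delta$) into the backward scalar $\xi_k$ (built from $\fnts\lambda$) multiplying the sensitivity gradients. The index bookkeeping between $\fnts\delta_{k-1}$ and $\fnts\delta_k$ is what reverses the marching direction in time and must be tracked carefully.
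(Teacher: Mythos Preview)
Your proposal is correct and amounts to the same derivation the paper gives in the appendix: the paper explicitly writes the block Jacobian $\wt{\fntb E}'(\fnto y)$ with the rank-one corrections $\fnts\Gamma_{y,k}=\fntb B^\top\fntb F_k(\nabla_y\gamma_k)^\top$ and $\fnts\Gamma_{Y,k}=\fntb B^\top\fntb F_k(\nabla_Y\gamma_k)^\top$, transposes it, and back-substitutes, while you instead read off the transpose action through the pairing $\langle\wt{\fntb E}'\fnto\delta,\fnto\lambda\rangle$. Both hinge on the same observation you state explicitly---that transposing the outer product $(\Delta t\sum_i b_i\fntb F_{k,i})(\nabla\gamma_k)^\top$ converts the forward scalar $\rho_k$ into the adjoint scalar $\xi_k$---so the two presentations differ only in bookkeeping, not substance.
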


As we show in the appendix section, linearized and adjoint IDT formulas define solutions to the following matrix systems, respectively:
\begin{align*}
	\paren{ \frac{\partial\fntb E}{\partial\fnto y} + \red{\frac{\partial\fntb E}{\partial \fntb\gamma} \fntb \gamma'} } \fnto \delta &= \fnto w, \quad \text{(linearized)}\\
	\paren{ \frac{\partial \fntb E}{\partial\fnto y}^\top + \red{ \paren{\fntb \gamma'}^\top \frac{\partial\fntb E}{\partial \fntb\gamma}^\top }} \fnto \lambda &= \fnto w, \quad \text{(adjoint)}.
\end{align*}
Terms associated with the linearization of $\gamma_k$ in the linearized/adjoint IDT update formulas above, and in Lemmas \ref{thm:linIDT} and \ref{thm:adjIDT}, are highlighted using red text.
We see that taking into account the relaxation parameter in the linearization results in having to compute gradients $\nabla_{y}\gamma_k$ and $\nabla_{Y}\gamma_k$ as well as scalars $\rho_k$ and $\xi_{k}$ at each time step.
These gradients in turn require RK approximations at two time steps ($\fntb y_{k-1}$ and $\fntb y_{k}$), internal stages $\fntb Y_k$, and the evaluation of the right-hand-side function $\mb f$ and its Jacobian, as well as the gradient and Hessian of the entropy function.

\subsubsection{Time-symmetry of IDT}
Before moving on to the discrete RRK adjoint, we discuss the special case where
\[
	\mb f(\mb y(t) , t) = \fntb S\mb y(t)
\]
for a skew-symmetric matrix $\fntb S\in\mathbb R^{N\times N}$ in IVP \ref{eq:IVP}.
It can be shown that IVP \ref{eq:IVP} is entropy/energy conservative with respect to the square entropy
\[
	\eta(\mb y(t)) = \frac{1}{2}\|\mb y(t)\|^2.
\]
Of course, given that $\mb f(\mb y)$ is linear in this case, it follows that linearization of the continuous state equation coincides with the original problem, ignoring initial conditions.
This is not obvious but still true for the IDT formulas, even though the relaxation parameter is nonlinear with respect to $\fnto y$.
 In other words, discretization by an IDT method commutes with linearization in this special case.

\begin{theorem}\label{thm:IDTskew1}
Suppose we were to apply IDT and the linearized IDT to IVP \ref{eq:IVP} with $\mb f(\mb y(t), t) = \fntb S \mb y(t)$, where $\fntb S$ is skew-symmetric and the entropy function is $\eta(\mb y) = \frac{1}{2}\|\mb y\|^2$.
It follows that IDT updates are equivalent to linearized IDT updates.
In particular, if linearized IDT is applied with $\fnts \delta_0= \fntb y_{\rm init}$, $\fntb w_k=\fntb 0$ and $\fntb W_{k}=\fntb 0$, then $\fntb \delta_k = \fntb y_k$ for all time steps.
\end{theorem}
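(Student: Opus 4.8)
The plan is to argue by induction on the time-step index $k$, showing that the linearized IDT iterates coincide with the nonlinear IDT iterates at every stage. Since $\mb f(\fntb y) = \fntb S\fntb y$ is linear, the stage Jacobians collapse to $\fntb J_{k,i} = \fntb S$ for all $k,i$, so with $\fntb W_k = \fntb 0$ the linearized stage equation \ref{eq:linRKint} and the IDT stage equation \ref{eq:RKint} become the \emph{same} linear system once the ``previous'' iterate is identified. First I would dispatch the base case $\fnts\delta_0 = \fntb y_{\rm init} = \fntb y_0$, then assume $\fnts\delta_{k-1} = \fntb y_{k-1}$. Substituting this into the two stage systems and invoking invertibility of the common stage matrix $\fntb I - \Delta t(\fntb A_s\otimes\fntb S)$ (which holds whenever the underlying RK stages are well defined), I conclude $\fnts\Delta_{k,i} = \fntb Y_{k,i}$ for every $i$.

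The crux is then the update step. Comparing the linearized update \ref{eq:linIDTstep} with the IDT update \ref{eq:RRKstep}, and using $\fnts\delta_{k-1}=\fntb y_{k-1}$, $\fnts\Delta_{k,i}=\fntb Y_{k,i}$ and $\fntb J_{k,i}=\fntb S$, every term matches \emph{except} the relaxation correction, leaving $\fnts\delta_k = \fntb y_k + \rho_k\,\Delta t\sum_i b_i \fntb F_{k,i}$. So the entire theorem reduces to proving that the scalar $\rho_k = (\nabla_y\gamma_k)^\top\fntb y_{k-1} + (\nabla_Y\gamma_k)^\top\fntb Y_k$ vanishes. This is the step I expect to be the main obstacle, and it is where the skew-symmetry of $\fntb S$ and the choice of the square entropy are essential.

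To show $\rho_k = 0$ I would use a homogeneity argument. With $\eta(\fntb y)=\tfrac12\|\fntb y\|^2$ and $\fntb F_{k,i}=\fntb S\fntb Y_{k,i}$, each of the three terms in the root function $r_k(\gamma;\fnto y)$ of \ref{eq:root} is homogeneous of degree two under the simultaneous scaling $(\fntb y_{k-1},\fntb Y_k)\mapsto(\alpha\fntb y_{k-1},\alpha\fntb Y_k)$, so $r_k(\gamma;\alpha\,\cdot\,)=\alpha^2\, r_k(\gamma;\,\cdot\,)$ for each fixed $\gamma$ and $\alpha\neq 0$. Hence the set of roots in $\gamma$ is scale-invariant, so the selected nonzero root $\gamma_k$ (closest to one), viewed as a function of $(\fntb y_{k-1},\fntb Y_k)$, is homogeneous of degree zero; it is differentiable there by the implicit function theorem already used in \ref{eq:gradgamma}. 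Euler's identity for degree-zero homogeneous functions then gives exactly $(\nabla_y\gamma_k)^\top\fntb y_{k-1} + (\nabla_Y\gamma_k)^\top\fntb Y_k = 0$, i.e. $\rho_k=0$. Plugging $\rho_k=0$ back into the update closes the induction: $\fnts\delta_k=\fntb y_k$.

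Alternatively, one can verify $\rho_k=0$ by direct computation from \ref{eq:gradgamma}--\ref{eq:dr}: specializing $\nabla\eta(\fntb y)=\fntb y$, $\nabla^2\eta=\fntb I$ and $\fntb S^\top=-\fntb S$ collapses $\partial r_k/\partial\fntb Y_{k,j}$ to $\gamma_k b_j\Delta t\,\fntb y_k^\top\fntb S$ and $\partial r_k/\partial\fntb y_{k-1}$ to $\fntb y_k^\top-\fntb y_{k-1}^\top$, so that the combination $\tfrac{\partial r_k}{\partial\fntb y_{k-1}}\fntb y_{k-1}+\sum_j\tfrac{\partial r_k}{\partial\fntb Y_{k,j}}\fntb Y_{k,j}$ telescopes to $\|\fntb y_k\|^2-\|\fntb y_{k-1}\|^2$. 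This vanishes because the defining condition $r_k(\gamma_k)=0$ forces discrete entropy conservation: each stage contribution $\nabla\eta(\fntb Y_{k,i})^\top\fntb F_{k,i}=\fntb Y_{k,i}^\top\fntb S\fntb Y_{k,i}$ is zero by skew-symmetry. Both routes isolate the same mechanism, so I would present the homogeneity argument as the main proof and keep the direct calculation as a remark.
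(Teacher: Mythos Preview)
Your proof is correct, and the overall skeleton---induction on $k$, identify the stage equations as the same linear system, then reduce the update step to proving $\rho_k=0$---matches the paper exactly. The genuine difference is in how you establish $\rho_k=0$. The paper exploits the fact that, for the square entropy, $r_k(\gamma;\fnto y)$ is \emph{quadratic} in $\gamma$ with $\fntb e_k=0$, derives the closed form $\gamma_k = -2\,\fntb y_{k-1}^\top\fntb d_k/\|\fntb d_k\|^2$, differentiates it directly to get explicit expressions for $\nabla_y\gamma_k$ and $\nabla_Y\gamma_{k,i}$, and then checks that the two contributions evaluate to $\gamma_k$ and $-\gamma_k$. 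Your main route instead observes that $r_k(\gamma;\,\cdot\,)$ is homogeneous of degree two in $(\fntb y_{k-1},\fntb Y_k)$, so the selected root $\gamma_k$ is homogeneous of degree zero and Euler's identity kills $\rho_k$ immediately. This is more conceptual and in fact uses only linearity of $\mb f$ and the quadratic entropy (skew-symmetry is not needed for this step, only for the existence/well-posedness of the relaxation root via $\fntb e_k=0$). Your alternative direct calculation from \ref{eq:gradgamma}--\ref{eq:dr} is also correct and lands on $\|\fntb y_k\|^2-\|\fntb y_{k-1}\|^2=0$ via discrete entropy conservation. The trade-off is that the paper's explicit formulas for $\gamma_k$ and its gradients are reused verbatim in the proof of Theorem~\ref{thm:IDTskew2} (the adjoint time-symmetry result), so if you go the homogeneity route you will still need to supply those formulas later; the paper's more hands-on computation pays for itself downstream.
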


\begin{proof}
Before proving equivalence between IDT and linearized IDT, we first make some observations.
In this linear case, we have
\[
	\fntb F_{k,i} = \fntb S\fntb Y_{k,i}, \quad \text{and} \quad 
	\fntb J_{k,i} = \fntb S.
\]
Moreover, the root function for computing the relaxation parameter is quadratic in $\gamma$,
\[
	r_k(\gamma; \fnto y) = \frac{1}{2}\| \fntb y_{k-1} + \gamma \fntb d_k\|^2 - \frac{1}{2}\|\fntb y_{k-1}\|^2 - \gamma \fntb e_k
\]
where
\begin{align}
	\fntb d_k &:= \Delta t \sum_{i=1}^s b_i \fntb F_{k,i} = \Delta t \sum_{i=1}^s b_i \fntb S \fntb Y_{k,i}, \label{eq:dk}\\
	\fntb e_k &:= \Delta t \sum_{i=1}^s b_i \nabla\eta(\fntb Y_{k,i})^\top \fntb F_{k,i}. \nonumber
\end{align}
The vectors $\fntb d_k$ and $\fntb e_k$ are based on notation from \cite{Ranocha_2020a} and represent the search direction (based on a projection interpretation of relaxation methods) and the entropy production at the current time step, respectively.
Note that $\fntb e_k = 0$ since $\nabla \eta(\mb y) = \mb y$ and
\[
	\nabla \eta(\fntb Y_{k,i})^\top \fntb F_{k,i} = \fntb Y_{k,i}^\top \fntb S \fntb Y_{k,i} = 0
\]
by skew-symmetry of $\fntb S$.

Since $r_k(\gamma_k;\fnto y) = 0$ is nothing more than a quadratic root problem we can come up with an explicit formula for $\gamma_k$, the non-zero root of this quadratic function:
\begin{equation}\label{eq:gammak}
	\gamma_k := -\frac{2\fntb y_{k-1}^\top \fntb d_k}{\|\fntb d_k\|^2},
\end{equation}
which is consistent with what is reported in \cite{Ketcheson_2019}.
Furthermore, the gradients with respect to $\fntb y_{k-1}$ and $\fntb Y_{k,i}$ are given by
\begin{align}
	\nabla_{y} \gamma_k &= -\frac{2}{\|\fntb d_k\|^2} \fntb d_k, \label{eq:grady}\\
	\nabla_{Y} \gamma_{k,i} 
		&= -\frac{2 b_i \Delta t}{\|\fntb d_k\|^2} \fntb S^\top \Big( \underbrace{\fntb y_{k-1} - \frac{2 \fntb y_{k-1}^\top \fntb d_k}{\|\fntb d_k\|^2} \fntb d_k}_{\fntb y_{k} = \fntb y_{k-1} + \gamma_k\fntb d_k} \Big)
		= -\frac{2 b_i \Delta t}{\|\fntb d_k\|^2} \fntb S^\top \fntb y_{k}. \label{eq:gradY}
\end{align}

We now prove that the $k$-th step of the linearized IDT method is equivalent to the $k$-th IDT step, assuming $\fnts \delta_{k-1}= \fntb y_{k-1}$.
First note that the internal stages coincide, $\fnts \Delta_{k}= \fntb Y_{k}$, since their update formulas are identical, see equations \ref{eq:RKint} and \ref{eq:linRKint}. This follows from $\fnts \delta_{k-1}= \fntb y_{k-1}$, $\fntb J_{k,j} = \fntb S$ and $\fntb F_{k,j} = \fntb S \fntb Y_{k,j}$.
Next we look at the update step for linearized IDT, equation \ref{eq:linIDTstep}.
In particular,
\begin{align*}
	\nabla_{y} \gamma_k^\top \fnts \delta_{k-1} = -\frac{2}{\|\fntb d_k\|^2} \fntb d_k^\top \fntb y_{k-1} = \gamma_k
\end{align*}
and
\begin{align*}
	\nabla_{Y} \gamma_{k}^\top \fnts \Delta_k 
	&= - \frac{2}{\|\fntb d_k\|^2} \fntb y_{k}^\top \paren{ \Delta t \sum_{i=1}^s b_i \fntb S\fntb Y_{k,i} } \\
	&= - \frac{2}{\|\fntb d_k\|^2} \fntb y_{k}^\top\fntb d_k \\
	&= - \frac{2}{\|\fntb d_k\|^2} (\fntb y_{k-1} + \gamma_k \fntb d_k)^\top\fntb d_k\\
	&= -\gamma_k,
\end{align*}
which implies that 
\[
	\rho _k = \nabla_{y} \gamma_k^\top \fnts \delta_{k-1} + \nabla_{Y} \gamma_k^\top \fnts \Delta_k = 0.
\]
Using $\fnts \delta_{k-1}=\fntb y_{k-1}$, $\fnts \Delta_{k,i} = \fntb Y_{k,i}$, and $\rho_k = 0$, we can conclude that the $k$-th step is given by
\begin{align*}
	\fnts \delta_k 
	&= \fnts \delta_{k-1} + \gamma_k \underbrace{\Delta t \sum_{i=1}^s b_i \fntb S \fnts \Delta_{k,i}}_{\fntb d_k} + \cancel{\rho_k} \Delta t \sum_{i=1}^s b_i \fntb S\fntb Y_{k,i}\\
	&= \fntb y_{k-1} + \gamma_k \fntb d_k\\
	&= \fntb y_k.
\end{align*}
Assuming $\fnts \delta_0 = \fntb y_0 = \fntb y_{\rm init}$, we can conclude by induction that $\fnts \delta_k = \fntb y_k$ and $\fnts \Delta_{k}=\fntb Y_{k}$ for all time steps.
\end{proof}

Building off of the equivalence of the forward and linearized continuous systems, and similarly for IDT and linearized IDT, we discuss an interesting relationship with their respective adjoints.
The adjoint of IVP \ref{eq:IVP}, again with $\mb f(\mb y,t) = \fntb S\mb y$, is given by
\begin{subequations}\label{eq:IVPadj}
\begin{align}
	-\mb\lambda'(t) &= \fntb S^{\top} \mb\lambda(t), \quad 0<t<T \label{eq:ODEadj}\\
	\mb\lambda(T) &= \fnts\lambda_{\rm final}
\end{align}
\end{subequations}
along with the following adjoint condition
\begin{equation}\label{eq:adjcond}
	\fnts\lambda_{\rm final}^\top \mb y(T) = \mb\lambda(0)^\top \fntb y_{\rm init}.
\end{equation}
For skew-symmetric $\fntb S$, system \ref{eq:IVPadj} is essentially the original problem but with a final time condition.
In particular, if $\fntb\lambda_{\rm final} = \mb y(T)$, then system \ref{eq:IVPadj} is equivalent to the forward problem in reverse time, i.e., $\mb \lambda(t) = \mb y(t)$ for all time $0\le t\le T$.
Note that condition \ref{eq:adjcond} is automatically satisfied here since the problem is norm conservative,
\[
	\fnts\lambda_{\rm final}^\top \mb y(T) = \|\mb y(T)\|^2 = \| \mb y(0)\|^2 = \mb\lambda(0)^\top \fntb y_{\rm init}.
\]
We refer to this equivalency between the forward and adjoint systems as a \textit{time-symmetry} property of the continuous problem. 
In Theorem~\ref{thm:IDTskew2} we prove that IDT preserves a similar time-symmetry with its adjoint.

\begin{theorem}\label{thm:IDTskew2}
Assume the underlying RK method is explicit or diagonally implicit.
Suppose we were to apply IDT to IVP \ref{eq:IVP} with $\mb f(\mb y, t) = \fntb S \mb y$, where $\fntb S$ is skew-symmetric and the entropy function is $\eta(\mb y) = \frac{1}{2}\|\mb y\|^2$.
It follows that the IDT method preserves time-symmetry.
In particular, if adjoint IDT is applied with $\fnts \lambda_{K}= \fntb y_K$ (assuming IDT takes $K$ steps), $\fntb w_k = \fntb 0$ and $\fntb W_k=\fntb 0$, then $\fnts \lambda_k = \fntb y_{k}$ for all time steps, up to machine precision.
\end{theorem}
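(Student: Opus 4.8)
The plan is to mirror the forward-versus-linearized argument of Theorem~\ref{thm:IDTskew1}, but now by \emph{backward} induction on the step index $k$ using the adjoint IDT formulas \eqref{eq:adjIDTstep}--\eqref{eq:adjIDTint}. The base case is the prescribed final condition $\fnts\lambda_K = \fntb y_K$, and at each step I would assume $\fnts\lambda_k = \fntb y_k$ and aim to deduce $\fnts\lambda_{k-1} = \fntb y_{k-1}$. Since the update \eqref{eq:RRKstep} gives $\fntb y_{k-1} = \fntb y_k - \gamma_k\fntb d_k$, while the adjoint step reads $\fnts\lambda_{k-1} = \fnts\lambda_k + \sum_{i=1}^s\fnts\Lambda_{k,i} + \xi_k\nabla_{y}\gamma_k$, the entire theorem reduces to two claims: (i) the relaxation-gradient contribution satisfies $\xi_k\nabla_{y}\gamma_k = -\gamma_k\fntb d_k$, so that it exactly supplies the difference $\fntb y_{k-1}-\fntb y_k$; and (ii) the adjoint internal stages sum to zero, $\sum_{i=1}^s\fnts\Lambda_{k,i} = \fntb 0$.

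For claim (i) I would substitute the linear-case identities from Theorem~\ref{thm:IDTskew1}, namely $\fntb F_{k,i} = \fntb S\fntb Y_{k,i}$, $\fntb J_{k,i} = \fntb S$, and the closed forms \eqref{eq:gammak}, \eqref{eq:grady}, \eqref{eq:gradY}, into the adjoint scalar $\xi_k = \Delta t\sum_{i=1}^s b_i\fntb F_{k,i}^\top\fnts\lambda_k$. Using $\fnts\lambda_k = \fntb y_k$ and the definition \eqref{eq:dk} of $\fntb d_k$, this collapses to $\xi_k = \fntb d_k^\top\fntb y_k$; then $\fntb y_k = \fntb y_{k-1}+\gamma_k\fntb d_k$ together with \eqref{eq:gammak} yields $\xi_k = \tfrac{1}{2}\gamma_k\|\fntb d_k\|^2$. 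Combining this with $\nabla_{y}\gamma_k = -\tfrac{2}{\|\fntb d_k\|^2}\fntb d_k$ from \eqref{eq:grady} gives claim (i) at once.

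The crux is claim (ii), and this is where I expect the main obstacle to lie. Writing out the adjoint internal-stage recursion \eqref{eq:adjIDTint} with $\fntb J_{k,i}^\top = \fntb S^\top$, $\fnts\lambda_k = \fntb y_k$ and $\fntb W_{k,i}=\fntb 0$, I would show that the explicit term $\gamma_k b_i\Delta t\,\fntb S^\top\fntb y_k$ is \emph{exactly cancelled} by the relaxation-gradient term $\xi_k\nabla_{Y}\gamma_{k,i}$ (using $\xi_k = \tfrac{1}{2}\gamma_k\|\fntb d_k\|^2$ and the closed form \eqref{eq:gradY}). What survives is the \emph{homogeneous} coupled system $\fnts\Lambda_{k,i} = \Delta t\,\fntb S^\top\sum_{j=1}^s a_{ji}\fnts\Lambda_{k,j}$. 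This cancellation is the key structural observation, and verifying it carefully is the most delicate bookkeeping step; everything else is routine.

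Finally, to conclude $\fnts\Lambda_{k,i} = \fntb 0$ for all $i$ from this homogeneous system, I would invoke the hypothesis that the underlying RK method is explicit or diagonally implicit, so that $\fntb A_s$ is lower triangular and the coupling coefficients $a_{ji}$ vanish for $i>j$. The stage system is then triangular in the stage index and can be solved by back substitution from $i=s$ down to $i=1$: at each stage one faces $\Big(\fntb I - \Delta t\,a_{ii}\fntb S^\top\Big)\fnts\Lambda_{k,i} = \fntb 0$, and since $\fntb S^\top=-\fntb S$ is skew-symmetric its spectrum is purely imaginary, so this matrix has eigenvalues with real part one and is invertible; hence $\fnts\Lambda_{k,i}=\fntb 0$. (For a fully implicit method one would instead have to show the block matrix $\fntb I - \Delta t\,(\fntb A_s^\top\otimes\fntb S^\top)$ is invertible, which is precisely why the triangular assumption is imposed.) With (i) and (ii) established, the inductive step closes and the backward induction gives $\fnts\lambda_k = \fntb y_k$ for all $k$; the ``up to machine precision'' caveat reflects only that $\gamma_k$ and its gradients are evaluated from numerically computed quantities, the identity itself being exact in exact arithmetic.
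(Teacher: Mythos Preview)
Your proposal is correct and follows essentially the same route as the paper: backward induction on $k$, evaluation of $\xi_k$ using $\fnts\lambda_k=\fntb y_k$, the cancellation $\xi_k\nabla_Y\gamma_{k,i}=-\gamma_k b_i\Delta t\,\fntb S^\top\fntb y_k$ reducing the stage equations to the homogeneous system, and the lower-triangular back-substitution forcing $\fnts\Lambda_{k,i}=\fntb 0$. The only cosmetic difference is that you pass through the closed form $\xi_k=\tfrac{1}{2}\gamma_k\|\fntb d_k\|^2$ whereas the paper keeps $\xi_k=\fntb d_k^\top\fntb y_k$ and simplifies $-\tfrac{2}{\|\fntb d_k\|^2}\fntb d_k^\top\fntb y_k=-\gamma_k$ directly; your spectral argument for the invertibility of $\fntb I-\Delta t\,a_{ii}\fntb S^\top$ is in fact more explicit than the paper's, which simply asserts the implication.
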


\begin{proof}
This proof makes use of some simplifications derived in the first half of the proof for Theorem~\ref{thm:IDTskew1}, based on the linearity of the problem.
In particular, we make use of $\fntb F_{k,i} = \fntb S \fntb Y_{k,i}$, $\fntb J_{k,i} = \fntb S$, $\fntb d_k$ as defined by equation \ref{eq:dk}, the explicit formula for $\gamma_k$ given by equation \ref{eq:gammak}, and gradients of $\gamma_k$ (equations \ref{eq:grady} and \ref{eq:gradY}).

Consider the $k$-th step (in reverse time) of the adjoint IDT algorithm, assuming $\fnts \lambda_{k}=\fntb y_k$.
Note that
\[
	\xi_k = \Delta t \sum_{i=1}^s b_i \fntb F_{k,i}^\top \fnts \lambda_k = \fntb d_k^\top \fntb y_k.
\]
The internal stages are given by
\begin{align*}
	\fnts \Lambda_{k,i} 
	&= \Delta t \fntb J_{k,i}^\top \paren{ \gamma_{k} b_i \fnts \lambda_k + \sum_{j=1}^s a_{ji} \fnts \Lambda_{k,j} } + \xi_{k} \Big(\nabla_{Y} \gamma_{k,i}\Big)\\
	&= \Delta t \fntb S^\top \paren{ \gamma_{k} b_i \fntb y_k + \sum_{j=1}^s a_{ji} \fnts \Lambda_{k,j} } + \paren{ \fntb d_{k}^\top \fntb y_k } \paren{ -\frac{2 \Delta t \; b_i }{\|\fntb d_{k}\|^2} \fntb S^\top \fntb y_{k} }\\
	&= \Delta t \fntb S^\top \paren{\sum_{j=1}^s a_{ji} \fnts \Lambda_{k,j} }  + \gamma_{k} b_i \Delta t  \fntb S^\top\fntb y_k  - \gamma_{k} b_i \Delta t \fntb S^T \fntb y_k\\
	&=  \Delta t \fntb S^\top \paren{ \sum_{j=1}^s a_{ji} \fnts \Lambda_{k,j} },
\end{align*}
where we made use of
\begin{align*}
	- \frac{2}{\|\fntb d_k\|^2} \fntb d_{k}^\top\fntb y_k 
	&= - \frac{2}{\|\fntb d_k\|^2} \fntb d_k^\top (\fntb y_{k-1} + \gamma_k \fntb d_k)\\
	&= \gamma_k - 2\gamma_k\\
	&= -\gamma_k,
\end{align*}

Recall that the RK coefficient matrix $\fntb A_s$ is lower triangular for explicit or diagonally implicit RK method.
Thus, we have the following set of equations for the internal stages,
\[
	\paren{\fntb I - a_{ii}\Delta t \fntb S^\top} \fnts \Lambda_{k,i} = \Delta t \fntb S^\top \sum_{j>i} a_{ji} \fnts \Lambda_{k,j}, \quad i=1,...,s.
\]
It is easy to see, that for $i=s$,
\[
	\paren{\fntb I - a_{ss}\Delta t \fntb S^\top} \fnts \Lambda_{k,s} = \fntb 0 \quad \implies \quad \fnts \Lambda_{k,s} = \fntb 0.
\]
Moreover, by induction, it can be shown that $\fnts \Lambda_{k,i}=\fntb 0$ for all $i=1,...,s$.

Since the internal stages zero-out, the $k$-th adjoint update step reduces to
\begin{align*}
	\fnts \lambda_{k-1} 
	&= \fnts \lambda_{k} + \sum_{i=1}^s \cancel{\fnts \Lambda_{k,i}} + \xi_{k}\nabla_{y} \gamma_{k}\\
	&= \fnts \lambda_{k} + \paren{ \fntb d_{k}^\top \fntb y_k } \paren{ -\frac{2}{\|\fntb d_{k}\|^2} \fntb d_{k} }\\
	&= \fntb y_{k} - \gamma_{k} \fntb d_{k}\\
	&= \fntb y_{k-1}.
\end{align*}
Assuming $\fnts \lambda_K = \fntb y_K$, we can conclude by induction that $\fnts \lambda_{k} = \fntb y_{k}$ for all subsequent time steps.
\end{proof}

Again, we emphasize that our notion of \emph{adjoint} in this paper is based the transpose of the matrix-form of the linearized state-equations induced by the time-stepping scheme.
Moreover, by \textit{time-symmetry} we refer to the relationship between the forward and adjoint continuous system as well as the forward and adjoint IDT time-stepping formulas.
In other words, for IDT, time-symmetry refers to the adjoint time-step as reversing the corresponding forward time-step.
The use of \text{adjoint} and \text{time-symmetry} should not be equated with the standard definitions used in the literature for symplectic/geometric numerical integrators; see \cite{Hairer_2006,Hernandez_2018} for other definitions of time-reversibility, time-symmetry, and adjoints of time-stepping methods.

\subsubsection{Discrete linearization and adjoint of RRK}

Recall that RRK yields the update $\fntb y_{k}\approx \mb y(t_{k-1}+\gamma_k\Delta t)$.
For this reason RRK can be interpreted as an adaptive time-stepping scheme.
Consequently, special care is taken in order to ensure that at the end of the time loop we end up with an approximation at the desired final time.
Suppose that at time step $K-1$ we have $t_{K-1}+\Delta t>T$ and $ t_{K} = t_{K-1} + \gamma_{K}\Delta t > T$.
In other words, the RRK time loop terminates after $K$ steps.
One could apply some continuation method to interpolate an approximation at $t=T$.
This interpolation step will need to be entropy stable in some sense and be accounted for in the linearization and subsequently adjoint of the RRK method.

As an interpolation-free alternative, we take an IDT final step but with a corrected step size of $\Delta t^* \DEF T-t_{K-1}$, resulting in $t_{K}=T$.
In other words, 
\begin{align*}
	\fntb y_K &= \fntb y_{K-1} + \gamma_K\Delta t^* \sum_{i=1}^s b_i \fntb F_{K,i},\\
	\fntb Y_{K,i} &= \fntb y_{K-1} + \Delta t^* \sum_{j=1}^s a_{ij} \fntb F_{K,j}, \quad i=1,...,s,
\end{align*}
where $\fntb y_K \approx \mb y(T)$.
This approach is similar to what is considered in \cite{Eberhard_1999,Alexe_2009} for generic adaptive time-stepping methods.
Note that 
\begin{equation}\label{eq:dt*}
	\Delta t^* = T - t_0 - \Delta t\sum_{\ell=1}^{K-1} \gamma_\ell,
\end{equation}
which implies that $\Delta t^*$ is dependent on $\gamma_\ell$ (and thus $\fntb y_{\ell-1}$ and $\fntb Y_\ell$) for $\ell=1,...,K-1$.
Moreover, since $\Delta t^*$ is the step size used in $r_K$ this implies $\gamma_K$ is dependent on $(\fntb y_{\ell-1},\fntb Y_\ell)$ for $\ell=1,...,K$.
These dependencies must be taken into account for a proper linearization.
Again, we provide the derivation on the appendix and summarize the results here.

\begin{lemma}\label{thm:linRRK}
Assuming RRK takes $K$ steps, and that the last step is taken as an IDT step, with $\Delta t^*=T-t_{K-1}$, then the linearized RRK approximations and internal stages, $\fnts \delta_k$ and $\fnts \Delta_k$ for $k=1,...,K-1$, are as given by the linearized IDT computations in equations \ref{eq:linIDTstep} and \ref{eq:linRKint}, respectively.
The linearized RRK update formula for the final step is given by
\[
	\fnts \delta_{K} = \fnts \delta_{K-1} + \gamma_{K}\Delta t^* \sum_{i=1}^s b_i \fntb J_{K,i} \fnts \Delta_{K,i} + \red{\rho_{K} \Delta t^* \sum_{i=1}^s b_i \fntb F_{K,i}} + \fntb w_K
\]
where
\[
\begin{split}
	 \fnts \Delta_{K,i} 
	 	&= \fnts \delta_{K-1} + \Delta t^* \sum_{j=1}^s a_{ij}  \fntb J_{K,j} \fnts \Delta_{K,j} 
		\;\blue{- \; \rho_* \Delta t \sum_{j=1}^s a_{ij} \fntb F_{K,j}} + \fntb W_{K,i},\\
 	\blue{\rho_*} &\blue{= \sum_{k=1}^{K-1}\rho_k}.
\end{split}
\]
\end{lemma}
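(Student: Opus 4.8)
The plan is to treat the $K$ RRK steps in two groups. For the first $K-1$ steps, the update and internal-stage formulas are literally the IDT formulas with fixed step size $\Delta t$; only the physical time $t_{k-1}$ at which $\mb f$ is sampled differs, and time enters the linearization merely as a parameter. I would therefore argue that differentiating $\fntb E$ block-row by block-row over these rows reproduces exactly the linearized IDT equations of Lemma~\ref{thm:linIDT} together with equation~\ref{eq:linRKint}. This supplies $\fnts \delta_k$ and $\fnts \Delta_k$ for $k=1,\dots,K-1$ at no extra cost, and in particular it fixes $\rho_k = \paren{\nabla_y\gamma_k}^\top\fnts\delta_{k-1} + \paren{\nabla_Y\gamma_k}^\top\fnts\Delta_k$ as the directional derivative $\delta\gamma_k$ of the $k$-th relaxation parameter.

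The genuinely new content is the final step, and the whole difficulty is that its step size $\Delta t^*$ is not a constant but the function of the earlier relaxation parameters given in equation~\ref{eq:dt*}. First I would differentiate \ref{eq:dt*} directly: since $\delta\gamma_\ell = \rho_\ell$ for $\ell=1,\dots,K-1$, the variation of the corrected step size is $\delta(\Delta t^*) = -\Delta t\sum_{\ell=1}^{K-1}\rho_\ell = -\Delta t\,\rho_*$, which is precisely the accumulated quantity $\rho_*$ appearing in the statement. Next I would linearize the final internal-stage equation $\fntb Y_{K,i} = \fntb y_{K-1} + \Delta t^*\sum_j a_{ij}\fntb F_{K,j}$ with the product rule applied to the factor $\Delta t^*\fntb F_{K,j}$: the piece $\Delta t^*\sum_j a_{ij}\fntb J_{K,j}\fnts\Delta_{K,j}$ is the ordinary IDT contribution coming from $\delta\fntb F_{K,j}=\fntb J_{K,j}\fnts\Delta_{K,j}$, while differentiating the prefactor produces $\delta(\Delta t^*)\sum_j a_{ij}\fntb F_{K,j} = -\rho_*\,\Delta t\sum_j a_{ij}\fntb F_{K,j}$, which is exactly the blue correction term.

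For the final update $\fntb y_K = \fntb y_{K-1} + \gamma_K\Delta t^*\sum_i b_i\fntb F_{K,i}$ I would differentiate the product $\gamma_K\Delta t^*$ as a single effective step length, so that its variation contributes $\delta\paren{\gamma_K\Delta t^*}\sum_i b_i\fntb F_{K,i}$, which I identify with $\rho_K\Delta t^*\sum_i b_i\fntb F_{K,i}$, while $\delta\fntb F_{K,i}=\fntb J_{K,i}\fnts\Delta_{K,i}$ supplies the remaining $\gamma_K\Delta t^*\sum_i b_i\fntb J_{K,i}\fnts\Delta_{K,i}$ term, recovering the stated formula. To keep the bookkeeping transparent I would phrase all of this through the matrix representation, writing the reduced Jacobian as $\partial\fntb E/\partial\fnto y + \paren{\partial\fntb E/\partial\fntb\gamma}\fntb\gamma'$ and observing that, relative to IDT, the only structural change is in the last block row of $\fntb E$, whose dependence on $\fntb\gamma$ now runs through $\Delta t^*$ and therefore couples to every $\gamma_\ell$ with $\ell<K$.

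The main obstacle I anticipate is this nonlocal coupling. Unlike standard RK or IDT, where each step's linearization is purely local, the final RRK step inherits sensitivities from all previous steps through $\rho_*$, so I must apply the chain rule through $\Delta t^*(\gamma_1,\dots,\gamma_{K-1})$ consistently in two places: once in the internal stages, where it produces the $-\rho_*$ term, and once inside $\gamma_K$ itself, since $\gamma_K$ is the root of $r_K$ evaluated with step size $\Delta t^*$ and so its variation must also absorb the $\Delta t^*$ dependence. The delicate point is verifying that these two routes combine into exactly the compact form stated, leaving no residual $\gamma_K\,\delta(\Delta t^*)$ contribution outside $\rho_K$; I would resolve this precisely by the effective-step-length grouping above, checking term-by-term against equation~\ref{eq:dr} adapted to the final step with $\Delta t\mapsto\Delta t^*$.
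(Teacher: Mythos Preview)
Your proposal is correct and follows the same overall strategy as the paper: cast RRK in the global matrix representation, observe that only the last block row differs from IDT, and push the chain rule through the dependence of $\Delta t^*$ (and hence $\gamma_K$) on all earlier relaxation parameters, arriving at $\delta(\Delta t^*)=-\Delta t\,\rho_*$ for the internal stages and the purely local $\rho_K\Delta t^*$ correction for the update.

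The one noteworthy difference is how the key cancellation is handled. The paper computes the nonlocal sensitivities of $\gamma_K$ explicitly, deriving $\partial\gamma_K/\partial\fntb y_{\ell-1}=\gamma_K\tfrac{\Delta t}{\Delta t^*}(\nabla_y\gamma_\ell)^\top$ for $\ell<K$ via implicit differentiation of $r_K$, and then checks term by term that the contribution $\fntb B_*^\top\fntb F_K\,d\gamma_K/d\fnto y$ cancels against $\gamma_K\fntb B^\top\fntb F_K\,d(\Delta t^*/\Delta t)/d\fnto y$. Your ``effective step length'' grouping is a cleaner way to see the same thing: since $r_K$ in equation~\ref{eq:root} depends on $\gamma$ and $\Delta t^*$ only through the product $\gamma\Delta t^*$ (the internal stages $\fntb Y_{K,i}$ being held as independent coordinates in $\fnto y$), the root $\gamma_K\Delta t^*$ is determined by $(\fntb y_{K-1},\fntb Y_K)$ alone, and its directional derivative is automatically local and equal to $\rho_K\Delta t^*$. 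This buys you the cancellation for free and avoids the explicit computation of $\partial\gamma_K/\partial\fntb y_{\ell-1}$ for $\ell<K$; the paper's route, on the other hand, makes those nonlocal sensitivities visible, which is what is later reused when deriving the adjoint formulas in Lemma~\ref{thm:adjRRK}.
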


\begin{lemma}\label{thm:adjRRK}
Assuming RRK takes $K$ steps, and that the last step is taken as an IDT step with $\Delta t^*=T-t_{K-1}$, then the adjoint RRK approximations and internal stages for the first adjoint step, $\fnts \lambda_{K-1}$ and $\fnts \Lambda_{K}$, are given by the adjoint IDT formulas, equations \ref{eq:adjIDTstep} and \ref{eq:adjIDTint} respectively, though with $\Delta t\mapsto \Delta t^*$.
The adjoint RRK update formulas for $k=K-1,...,1$ are given by
\[
	\fnts \lambda_{k-1} = \fnts \lambda_{k} + \sum_{i=1}^s \fnts \Lambda_{k,i} + (\red{\xi_{k}} \; \blue{-\; \xi_*}) \red{\nabla_{y}\gamma_k} + \fntb w_{k-1}
\]
where
\begin{align*}
	\fnts \Lambda_{k,i} &= \Delta t \; \fntb J_{k,i}^\top \Big( \gamma_{k} b_i \fnts \lambda_{k} + \sum_{j=1}^s a_{ji} \fnts \Lambda_{k,j}\Big) + (\red{\xi_{k}} \;\blue{-\;\xi_*}) \red{ \nabla_{Y}\gamma_{k,i}} + \fntb W_{k,i},\\
	\blue{\xi_*} &\blue{= \Delta t \sum_{i=1}^s \sum_{j=1}^s a_{ji}\fntb F_{K,i}^\top\fnts \Lambda_{K,j}}.
\end{align*}
\end{lemma}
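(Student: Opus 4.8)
The plan is to obtain the adjoint RRK formulas exactly as in the IDT case, namely as the transpose of the linearized operator of Lemma~\ref{thm:linRRK}. Writing the linearized RRK equations in matrix form as $\fntb M \fnto\delta = \fnto w$ for a block lower-triangular operator $\fntb M$, the adjoint system is $\fntb M^\top \fnto\lambda = \fnto w$, which is block upper-triangular and is solved by back substitution, marching backwards from $k=K$ to $k=1$. The key structural observation is that $\fntb M$ differs from the stepwise IDT operator only through the long-range coupling carried by $\rho_* = \sum_{k=1}^{K-1}\rho_k$: for $k=1,\dots,K-1$ the rows of $\fntb M$ are identical to IDT (Lemma~\ref{thm:linIDT}), the final update row merely uses $\Delta t^*$ in place of $\Delta t$, and only the final internal-stage rows $\fnts\Delta_{K,i}$ acquire the extra term $-\rho_*\,\Delta t \sum_j a_{ij}\fntb F_{K,j}$. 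Accordingly I would decompose $\fntb M = \fntb M_0 + \fntb C$, where $\fntb M_0$ is the purely stepwise part (IDT on steps $1,\dots,K-1$ and an IDT final step with $\Delta t \mapsto \Delta t^*$) and $\fntb C$ encodes the $\rho_*$ coupling.

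Taking transposes gives $\fntb M^\top = \fntb M_0^\top + \fntb C^\top$. First I would dispatch $\fntb M_0^\top$: because $\fntb M_0$ is exactly the IDT linearized operator (with the final step rescaled by $\Delta t^*$), its transpose reproduces the adjoint IDT formulas of Lemma~\ref{thm:adjIDT} step by step. In particular, the first adjoint step $k=K$ is unaffected by the coupling, since $\rho_*$ involves neither $\fnts\delta_{K-1}$ nor $\fnts\Delta_K$, so the columns of $\fntb M$ indexed by these quantities are clean; hence $\fnts\lambda_{K-1}$ and $\fnts\Lambda_K$ are given precisely by equations~\ref{eq:adjIDTstep} and \ref{eq:adjIDTint} with $\Delta t \mapsto \Delta t^*$, as claimed.

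The crux is transposing $\fntb C$. Recalling $\rho_* = \sum_{k<K}\big[(\nabla_y\gamma_k)^\top \fnts\delta_{k-1} + (\nabla_Y\gamma_k)^\top \fnts\Delta_k\big]$, the block $\fntb C$ maps the earlier linearized quantities $\fnts\delta_{k-1}, \fnts\Delta_k$ (through the gradients $\nabla_y\gamma_k, \nabla_Y\gamma_{k,i}$) into the final internal-stage rows via the vector $-\Delta t\sum_j a_{ij}\fntb F_{K,j}$, i.e.\ its nonzero blocks are the outer products of these two factors. Upon transposition this single coupling collapses, when paired with the final adjoint internal stages $\fnts\Lambda_{K,i}$, into the scalar
\[
	\xi_* = \Delta t \sum_{i=1}^s\sum_{j=1}^s a_{ji}\,\fntb F_{K,i}^\top \fnts\Lambda_{K,j},
\]
which is then broadcast back to every earlier step $k<K$: it multiplies $\nabla_y\gamma_k$ in the $\fnts\lambda_{k-1}$ update and $\nabla_Y\gamma_{k,i}$ in the $\fnts\Lambda_{k,i}$ update, with a minus sign, combining with the local scalar $\xi_k$ to yield the coefficient $(\xi_k - \xi_*)$ in the statement. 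The index relabelling $i\leftrightarrow j$ that identifies $a_{ij}\fntb F_{K,j}^\top \fnts\Lambda_{K,i}$ with $a_{ji}\fntb F_{K,i}^\top \fnts\Lambda_{K,j}$ is what fixes the final form of $\xi_*$.

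I expect the main obstacle to be precisely this transpose of the long-range coupling: one must verify that a single scalar $\xi_*$, assembled at the final step from $\fnts\Lambda_K$, is distributed with the correct sign to all earlier adjoint equations, and that it enters additively with each $\xi_k$ rather than being accumulated across steps. Keeping the block bookkeeping of the coupling straight --- in particular that $\rho_*$ sits in the \emph{columns} of $\fntb C$ associated with steps $k<K$ but in a \emph{single} row-block associated with the final internal stages --- is the delicate point. Once that correspondence is pinned down, back substitution from $k=K$ downward reads off the stated formulas directly.
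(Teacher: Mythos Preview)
Your approach is correct and essentially matches the paper's derivation in the appendix: both identify the Jacobian $\wt{\fntb E}'(\fnto y)$ in matrix form, transpose it, and read off the adjoint updates by back substitution. The only difference is organizational. The paper rebuilds $\wt{\fntb E}'$ from scratch (re-deriving $\frac{d\gamma_K}{d\fnto y}$ and $\frac{d\Delta t^*}{d\fnto y}$, and noting the cancellation between the two in the $\fnts\delta_K$ row), whereas you take the linearized formulas of Lemma~\ref{thm:linRRK} as the definition of $\fntb M$ and split off the long-range coupling $\fntb C$ explicitly; since Lemma~\ref{thm:linRRK} already encodes that cancellation, your route is a legitimate shortcut. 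Your rank-one reading of $\fntb C$ (outer product of $\nabla\gamma_k$ with $\Delta t\sum_j a_{ij}\fntb F_{K,j}$) makes the transpose transparent and immediately produces the scalar $\xi_*$ with the correct sign, exactly as in the paper's final display.
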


In relation to the matrix-form, linearized and adjoint IDT formulas define solutions to the following matrix systems, respectively:
\begin{align*}
	\paren{ \frac{\partial\fntb E}{\partial\fnto y} + \red{\frac{\partial\fntb E}{\partial \fntb\gamma} \fntb \gamma'} \blue{+ \frac{\partial \fntb E}{\partial \Delta t^*} \frac{d \Delta t^*}{d \fnto y} }
	} \fnto \delta &= \fnto w, \quad \text{(linearized)}\\
	\paren{ \frac{\partial \fntb E}{\partial\fnto y}^\top + \red{\paren{\fntb \gamma'}^\top \frac{\partial\fntb E}{\partial \fntb\gamma}^\top} \blue{+\paren{\frac{d\Delta t^*}{d \fnto y}}^\top \frac{\partial\fntb E}{\partial\Delta t^*}^\top } } \fnto \lambda &= \fnto w, \quad \text{(adjoint)}.
\end{align*}
Again, terms associated with linearization with respect to the relaxation parameter are in red. 
In blue we have terms related to linearization with respect to the final step size $\Delta t^*$.
We note that accounting for $\Delta t^*$ in the linearization requires computing the scalar quantity $\rho_*$, which can be done efficiently by simply accumulating the $\rho_k$ scalars.
This scalar shows up in the internal stage computations for the last time-step.
For the adjoint RRK method, we see an expected structure that is complementary to linearized RRK.
In particular, the final step is as given by the adjoint IDT formula (with $\Delta t \mapsto \Delta t^*$). 
The scalar $\xi_*$, computed from the final step, shows up as a correction term for the remaining time steps.

Consider again the special case when $\mb f(\mb y,t) = \fntb S \mb y$, where $\fntb S$ is skew-symmetric.
The same results we observed for IDT hold for RRK as well.

\begin{theorem}\label{thm:RRKskew1}
Suppose we were to apply RRK, and linearized RRK to IVP \ref{eq:IVP} with $\mb f(\mb y, t) = \fntb S \mb y$, where $\fntb S$ is skew-symmetric.
It follows that RRK updates are equivalent to linearized RRK updates.
In particular, if linearized RRK is applied with $\fntb \delta_0= \fntb y_{\rm init}$, $\fntb w_k=\fntb 0$ and $\fntb W_k=\fntb 0$, then $\fntb \delta_k = \fntb y_k$ for all time steps.
\end{theorem}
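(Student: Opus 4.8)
The plan is to reduce the claim to Theorem~\ref{thm:IDTskew1} by exploiting the structural description in Lemma~\ref{thm:linRRK}, which says that the first $K-1$ linearized RRK steps are literally the linearized IDT formulas and that the final step is an IDT step with corrected size $\Delta t^*$. First I would import the linearity simplifications already established in the proof of Theorem~\ref{thm:IDTskew1}: $\fntb F_{k,i} = \fntb S \fntb Y_{k,i}$, $\fntb J_{k,i} = \fntb S$, the explicit quadratic root $\gamma_k$ of equation~\ref{eq:gammak}, and the gradient formulas~\ref{eq:grady} and~\ref{eq:gradY}. These hold verbatim for every RRK step, since RRK and IDT share the same internal-stage equations and the same root function $r_k$ (with step size $\Delta t$ for $k < K$, and with $\Delta t^*$ for $k = K$).

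Next I would dispatch the intermediate steps $k = 1,\dots,K-1$. By Lemma~\ref{thm:linRRK} the linearized updates coincide with linearized IDT, and the forward RRK state update is identical to IDT on these steps (the methods differ only in how the time level $t_k$ is interpreted, not in the formula for $\fntb y_k$). Hence the induction from Theorem~\ref{thm:IDTskew1} applies unchanged: starting from $\fnts \delta_0 = \fntb y_{\rm init} = \fntb y_0$, one obtains $\rho_k = 0$, $\fnts \Delta_k = \fntb Y_k$, and $\fnts \delta_k = \fntb y_k$ for every $k \le K-1$. The crucial consequence is that the accumulated scalar $\rho_* = \sum_{k=1}^{K-1}\rho_k$ vanishes.

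It then remains to treat the final step. Because $\rho_* = 0$, the $\rho_*$ correction term (the blue term) in the internal-stage formula of Lemma~\ref{thm:linRRK} drops out, so the final internal stages satisfy exactly the IDT recurrence with $\Delta t \mapsto \Delta t^*$; together with $\fnts \delta_{K-1} = \fntb y_{K-1}$ this gives $\fnts \Delta_K = \fntb Y_K$. I would then repeat the $\rho_k = 0$ computation of Theorem~\ref{thm:IDTskew1} with $\fntb d_K := \Delta t^* \sum_{i=1}^s b_i \fntb F_{K,i}$, obtaining $(\nabla_{y}\gamma_K)^\top \fnts \delta_{K-1} = \gamma_K$ and $(\nabla_{Y}\gamma_K)^\top \fnts \Delta_K = -\gamma_K$, so that $\rho_K = 0$. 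The final update then collapses to $\fnts \delta_K = \fntb y_{K-1} + \gamma_K \fntb d_K = \fntb y_K$, closing the induction.

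The main obstacle is bookkeeping rather than a genuinely new idea: one must confirm that accounting for the dependence of $\Delta t^*$ on the earlier relaxation parameters (the correction governed by $\rho_*$) does not break the equivalence. The fortunate point is that this dependence is harmless precisely because skew-symmetry forces every intermediate $\rho_k$ to vanish, whence $\rho_* = 0$ and the final step behaves like an ordinary IDT step with step size $\Delta t^*$. Thus Theorem~\ref{thm:RRKskew1} follows essentially as a corollary of Theorem~\ref{thm:IDTskew1}, the only extra care being the modified step size in the final update.
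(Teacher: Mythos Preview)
Your proposal is correct and follows essentially the same route as the paper: reduce the first $K-1$ steps to the IDT case via Theorem~\ref{thm:IDTskew1}, observe that each $\rho_k$ vanishes so that $\rho_*=0$, and then note that the final step collapses to an IDT step with step size $\Delta t^*$ (where $\rho_K=0$ is proved in the same way). If anything, you spell out the treatment of the final step more explicitly than the paper does.
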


\begin{proof}
Given that the first $K-1$ steps of RRK are algorithmically identical to IDT, it follows from theorem~\ref{thm:linIDT} that $(\fnts \delta_{k-1}, \fnts \Delta_k) = (\fntb y_{k-1}, \fntb Y_k)$ for $k=1,...,K-1$.
Moreover, in the proof of theorem~\ref{thm:linIDT}, we showed that $\rho_k = 0$ for $k=1,...,K-1$ ($\rho_K=0$ can similarly be proven), thus the accumulated scalar $\rho_*$ in RRK is also zero.
From this it is easy to see that the proposition holds.
\end{proof}

\begin{theorem}\label{thm:RRKskew2}
Assume the underlying RK method is explicit or diagonally implicit.
Suppose we were to apply RRK to IVP \ref{eq:IVP} with $\mb f(\mb y, t) = \fntb S \mb y$, where $\fntb S$ is skew-symmetric.
It follows that the RRK method preserves time-symmetry.
In particular, if adjoint RRK is applied with $\fnts \lambda_{K}= \fntb y_K$ (assuming RRK takes $K$ steps), $\fntb w_k=\fntb 0$ and $\fntb W_k = \fntb 0$, then $\fnts \lambda_k = \fntb y_{k}$ for all time steps, up to machine precision.
\end{theorem}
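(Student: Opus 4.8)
The plan is to reduce the RRK adjoint recursion to the IDT adjoint recursion already handled in Theorem~\ref{thm:IDTskew2}. By Lemma~\ref{thm:adjRRK}, the first adjoint step---producing $\fnts\lambda_{K-1}$ and the internal stages $\fnts\Lambda_K$ from $\fnts\lambda_K$---is exactly an adjoint IDT step with $\Delta t\mapsto\Delta t^*$, and the only feature distinguishing the subsequent RRK updates ($k=K-1,\dots,1$) from adjoint IDT updates is the accumulated correction scalar $\xi_*$. Thus the entire argument hinges on showing $\xi_*=0$, after which each RRK update collapses onto an IDT update and Theorem~\ref{thm:IDTskew2} closes the induction.

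First I would carry over the linear simplifications established in the proof of Theorem~\ref{thm:IDTskew1}: $\fntb F_{k,i}=\fntb S\fntb Y_{k,i}$, $\fntb J_{k,i}=\fntb S$, the explicit relaxation parameter \ref{eq:gammak}, and the gradient formulas \ref{eq:grady}--\ref{eq:gradY}. Starting from $\fnts\lambda_K=\fntb y_K$, I would apply the reasoning of Theorem~\ref{thm:IDTskew2} to this first adjoint step (with step size $\Delta t^*$). This yields two conclusions: the update gives $\fnts\lambda_{K-1}=\fntb y_{K-1}$, and the internal stages vanish, $\fnts\Lambda_{K,i}=\fntb 0$ for all $i=1,\dots,s$. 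It is precisely here that the explicit/diagonally-implicit hypothesis is used: lower-triangularity of $\fntb A_s$ decouples the stage equations into $\paren{\fntb I-a_{ii}\Delta t^*\fntb S^\top}\fnts\Lambda_{K,i}=\Delta t^*\fntb S^\top\sum_{j>i}a_{ji}\fnts\Lambda_{K,j}$, which back-substitution solves to zero starting from $\fnts\Lambda_{K,s}=\fntb 0$.

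The key step is then immediate. Since $\fnts\Lambda_{K,j}=\fntb 0$ for every $j$, the correction scalar from Lemma~\ref{thm:adjRRK},
\[
	\xi_* = \Delta t \sum_{i=1}^s\sum_{j=1}^s a_{ji}\fntb F_{K,i}^\top\fnts\Lambda_{K,j},
\]
is zero. With $\xi_*=0$, every RRK adjoint formula for $k=K-1,\dots,1$ reduces exactly to the corresponding adjoint IDT formula, as the factor $(\xi_k-\xi_*)$ collapses to $\xi_k$. Consequently the remaining $K-1$ steps are governed by the adjoint IDT recursion, and the induction of Theorem~\ref{thm:IDTskew2} guarantees $\fnts\lambda_k=\fntb y_k$ for all $k$ (up to machine precision, owing to the numerical root solve for each $\gamma_k$).

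I expect the main obstacle to be the rigorous verification that the first-step internal stages $\fnts\Lambda_K$ vanish, since this is the one place where the corrected final step $\Delta t^*$ and the skew-symmetry of $\fntb S$ genuinely interact, and it is what forces $\xi_*=0$. Everything downstream is a clean reduction to the already-proven IDT case, so once the zero-out is confirmed the result follows with no further computation.
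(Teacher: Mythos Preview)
Your proposal is correct and follows essentially the same route as the paper's proof: observe that the first adjoint step is an IDT step with $\Delta t\mapsto\Delta t^*$, invoke the argument of Theorem~\ref{thm:IDTskew2} to show $\fnts\Lambda_{K,i}=\fntb 0$, deduce $\xi_*=0$, and then reduce the remaining steps to the IDT recursion. Your write-up is in fact more explicit than the paper's, which dispatches the argument in three sentences.
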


\begin{proof}
In adjoint RRK, we have to account for the auxiliary scalar $\xi_*$,
\begin{align*}
	\xi_* = \Delta t \sum_{ij} a_{ji} \fntb F_{K,i}^\top \fnts \Lambda_{K,j}. 
\end{align*}
Just as in the IDT case, the first set of internal stages, $\fnts \Lambda_{K,i}$ for $i=1,...,s$, are all zero, hence $\xi_*=0$.
This allows us to carry on and show that $\fnts \lambda_{k-1} = \fntb y_{k-1}$ for all time steps.
\end{proof}

\section{Numerical experiments and results}
\label{sec:Results}

This section presents numerical results that validate and highlight properties discussed in the previous sections.
We restrict our focus to the more interesting and applicable RRK method, though analogous results can be generated for IDT as well.
We mention in passing that a bisection algorithm, similar to that of \cite{Ranocha_2019_code}, is used to solve the scalar root subproblem when computing relaxation parameters at each time step.

The previous section presented the discrete linearization and adjoint of RRK, taking into consideration the relaxation parameter, $\gamma$, and the corrected final step-size, $\Delta t^*$.
We demonstrate various consequences of improper linearization, when $\fnts \gamma$ and $\Delta t^*$ are not considered in the linearization; we refer to these cases as the $\gamma$-constant or $\Delta t^*$-constant case respectively.
Recall that $\Delta t^*$ can be written in terms of $\gamma_\ell$ for $\ell=1,...,K-1$ (see equation \ref{eq:dt*}),
hence, in the $\gamma$-constant case $\Delta t^*$ is also considered constant.

\subsection{Mathematical models}

We consider three mathematical models throughout our numerical tests which we present at this moment.

\subsubsection*{Nonlinear pendulum}

For the nonlinear pendulum, we take the first-order form as done in \cite{Ranocha_2020a},
\begin{equation}\label{eq:nlinpen}
	\frac{d}{dt} \mat{y_1(t)\\ y_2(t)} = \mat{-\sin(y_2(t)) \\ y_1(t)}, \quad 0 < t \le T
\end{equation}
with initial condition
\[
	\mb y(0) = \fntb y_{\rm init} \DEF \mat{1.5 \\ 1}.
\]
This problem is entropy conservative with respect to the following entropy function,
\[
	\eta(\mb y) = \frac{1}{2} y_1^2 - \cos(y_2).
\]

\subsubsection*{1D compressible Euler}
The 1D compressible Euler equations are given by
\begin{subequations}\label{eq:Euler}
\begin{align}
	\frac{\partial \rho}{\partial t} + \frac{\partial(\rho u)}{\partial x} &= 0,\\
	\frac{\partial(\rho u)}{\partial t} + \frac{\partial(\rho u^2 + p)}{\partial x} &= 0,\\
	\frac{\partial E}{\partial t} + \frac{\partial(u(E+p))}{\partial x} &=0,
\end{align}
\end{subequations}
which we solve over $0<t\le 1.5$ and $-1\le x\le 1$, with initial condition
\begin{align*}
	\rho(x,0) &= 1+ \tfrac{1}{2} \exp( - 50 (x-0.1)^2),\\
	u(x,0) &= 0,\\
	 p(x,0) &= \rho(x,0)^\gamma,
\end{align*}
assuming the pressure, $p$, satisfies the following constitutive relation:
\[
	p = (\gamma-1)(E - \tfrac{1}{2}\rho u^2), \quad \gamma=1.4.
\]
We arrive at IVP \ref{eq:IVP} by discretizing in space the compressible Euler equations with an entropy stable DG scheme \cite{Chan_2018,Chan_2020}.
In particular, $\mb y(t)$ represents semi-discretized quantities related to the variables $\rho,\rho u$ and $E$.
The entropy function associated with this semi-discretized system corresponds to a discretization of the continuous total entropy,
\[
	\eta(\mb y(t)) \approx \int S(\rho,\rho u,E) \; dx
\]
where $S$ is the continuous entropy function
\[
	S(\rho,\rho u,E) \DEF -\frac{\rho s}{\gamma-1}, \quad s = \log\paren{\frac{p}{\rho^\gamma}}.
\]

\subsubsection*{Linear skew-symmetric system}

For experiments concerning time-symmetry of RRK, as detailed in theorem \ref{thm:RRKskew2}, we solve the following linear problem:
\begin{subequations}\label{eq:skewsym}
\begin{align}
	\mb y'(t) &= \fntb S \mb y(t), \quad 0<t<T,\\
	\mb y(0) &= \fntb y_{\rm init}
\end{align}
\end{subequations}
where $\fntb S\in\mathbb R^{N\times N}$ is a randomly generated skew-symmetric matrix and $\fntb y_{\rm init}\in\mathbb R^N$ is a randomly generated initial condition; we take $N=10$.
The final time is taken to be proportional to the Frobenius norm of $\fntb S$, specifically, 
\[
	T = 10\cdot \|\fntb S\|_F \approx 133.46.
\]
As previously mentioned, this system is entropy conservative with respect to 
\[
	\eta(\mb y(t)) = \frac{1}{2} \| \mb y(t)\|^2.
\] 

\subsection{RK schemes}

The following are the RK schemes used throughout our experiments:

\begin{itemize}
\item 
Heun's method, a 2-stage, second-order RK scheme which we refer to as RK2,
\[
 	\fntb A_2 = \mat{0 & 0\\ 1 & 0}, \quad
	\fntb b_2 = \mat{1/2\\ 1/2}, \quad
	\fntb c_2 = \mat{0\\1}.
\]

\item
A 3-stage, third-order RK scheme (see \cite{Shu_1988}), which we refer to as RK3,
\[
 	\fntb A_3 = \mat{0 & 0 & 0\\ 1 & 0 & 0\\ 1/4 & 1/4 & 0}, \quad
	\fntb b_3 = \mat{1/6\\ 1/6 \\ 2/3}, \quad
	\fntb c_3 = \mat{0\\1\\1/2}.
\]

\item 
The standard 4-stage, fourth-order RK scheme, referred to as RK4,
\[
	\fntb A_4 = \mat{ 0 & 0 & 0 & 0\\ 1/2 & 0 & 0 & 0\\ 0 & 1/2 & 0 & 0\\ 0 & 0 & 1& 0}, \quad
	\fntb b_4 = \mat{1/6 \\ 1/3 \\ 1/3 \\ 1/6}, \quad
	\fntb c_4 = \mat{0 \\ 1/2 \\ 1/2 \\ 1}.
\]

\item
A  3-stage, third-order DIRK scheme,  referred to as DIRK3, with
\[
	\fntb A_3 = \mat{ \alpha & 0 & 0 \\ \tau_2-\alpha & \alpha & 0 \\ b_1 & b_2 & \alpha}, \quad
	\fntb b_3 = \mat{ b_1 \\ b_2 \\ \alpha}, \quad
	\fntb c_3 = \mat{\alpha \\ \tau_2 \\ 1},
\]
where
\begin{align*}
	\alpha &= 0.435866521508459\\
	\tau_2 &= (1+\alpha)/2\\
	b_1 &= -(6\alpha^2 -16\alpha +1)/4\\
	b_2 &= (6\alpha^2 - 20\alpha + 5)/4.
\end{align*}
We refer the reader to \cite{Persson_2012,Alexander_1977} for additional details on this DIRK scheme.
\end{itemize}

All relaxation variants of a specified RK scheme are simply denoted by an extra ``R'' in front of RK.
For example, RRK4 and DIRRK3 refer to the relaxation variants of RK4 and DIRK3, respectively.

\subsection{Verifying linearizations}

The first set of numerical experiments verify our linearization RRK formulas, as well as highlight the importance of taking into consideration the relaxation parameter and the corrected final step-size in the linearization process.
Let
\[
	\fntb E(\fnto y) = \fnto w
\]
denote an abstract system of time-stepping equations for a given right-hand-side vector $\fnto w$.
Let $\fntb H$ denote the inverse of $\fntb E$, i.e., if $\fnto y$ is the solution to the equation above then $\fnto y=\fntb H(\fnto w)$.
Essentially, $\fntb H$ specifies the explicit update formulas of a given time-integration scheme.
From implicit differentiation, it follows that the directional derivative of $\fntb H$, evaluated at $\fnto w$ in direction $\fnto d$, is given by $\fnto \delta$, 
\[
	\fnto \delta = \fntb H'(\fnto w) \fnto d.
\]
In practice, we compute $\fnto \delta$ as the solution to
\[
	 \fntb E'(\fnto y) \fnto \delta = \fnto d.
\]
This motivates our work in computing the Jacobian $\fntb E'$ for the different schemes. 

To verify that our linearized code indeed computes derivatives we study the numerical convergence of a simple finite difference approximation to the directional derivative, similar to what is done in \cite{Wilcox_2015} under a different context.
In particular, we check that
\begin{equation}\label{eq:fderr}
	\left\| \frac{\fntb H(\fnto w + h \fnto d) - \fntb H(\fnto w) }{h} - \fntb H'(\fnto w)\fnto d  \right\|
	=
	\left\| \frac{\fnto y_h - \fnto y}{h} - \fnto \delta \right\|
	= \mathcal O(h)
\end{equation}
as $h \to 0^+$, where $\fnto y$, $\fnto y_h$ and $\fnto\delta$ are solutions to the following systems:
\begin{align*}
	\fntb E(\fnto y) &= \fnto w,\\
	\fntb E(\fnto y_h) &= \fnto w + h\fnto d,\\
	\fntb E'(\fnto y) \fnto \delta &= \fnto d.
\end{align*}

\subsubsection{Nonlinear pendulum results}

We use the nonlinear pendulum equations \ref{eq:nlinpen} as our first model for tests concerning proper linearization.
When computing the finite difference error in equation \ref{eq:fderr}, we take $\fnto w = \fnto 0$ and $\fnto d = \fnts \chi_0 \fntb d_0$ where $\fntb d_0$ is a random vector meant to represent a random initial condition.

Figure \ref{fig:nlpen_linerr} plots finite difference error \ref{eq:fderr} for different choices of Jacobian $\fntb E'$ representing linearized RRK formulas with proper linearization or for the $\gamma$-constant or $\Delta t^*$-constant cases.
As expected, we observe that proper linearization achieves linear convergence while the other two cases do not, which is apparent for lower order RK schemes.
Interestingly enough, the $\Delta t^*$-constant case yields significantly smaller errors than the $\fntb\gamma$-constant case, though both fail to converge.

\begin{figure}[!h]
	\centering
	\begin{subfigure}{0.45\textwidth}
		\centering
		\includegraphics[width=\textwidth]{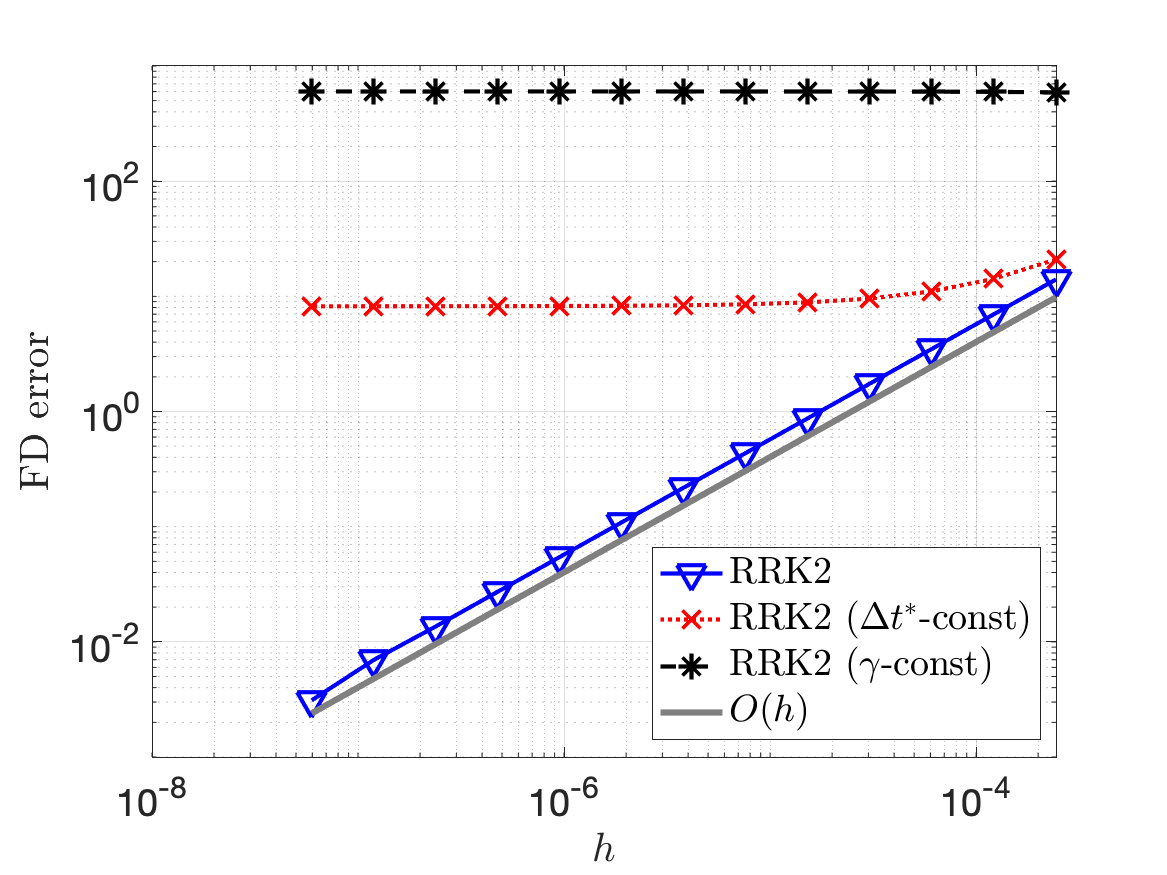}
		\caption{Using RK2.}
		\label{fig:nlpen_linerr_RRK2}
	\end{subfigure}
	\hspace{1em}
	\begin{subfigure}{0.45\textwidth}
		\centering
		\includegraphics[width=\textwidth]{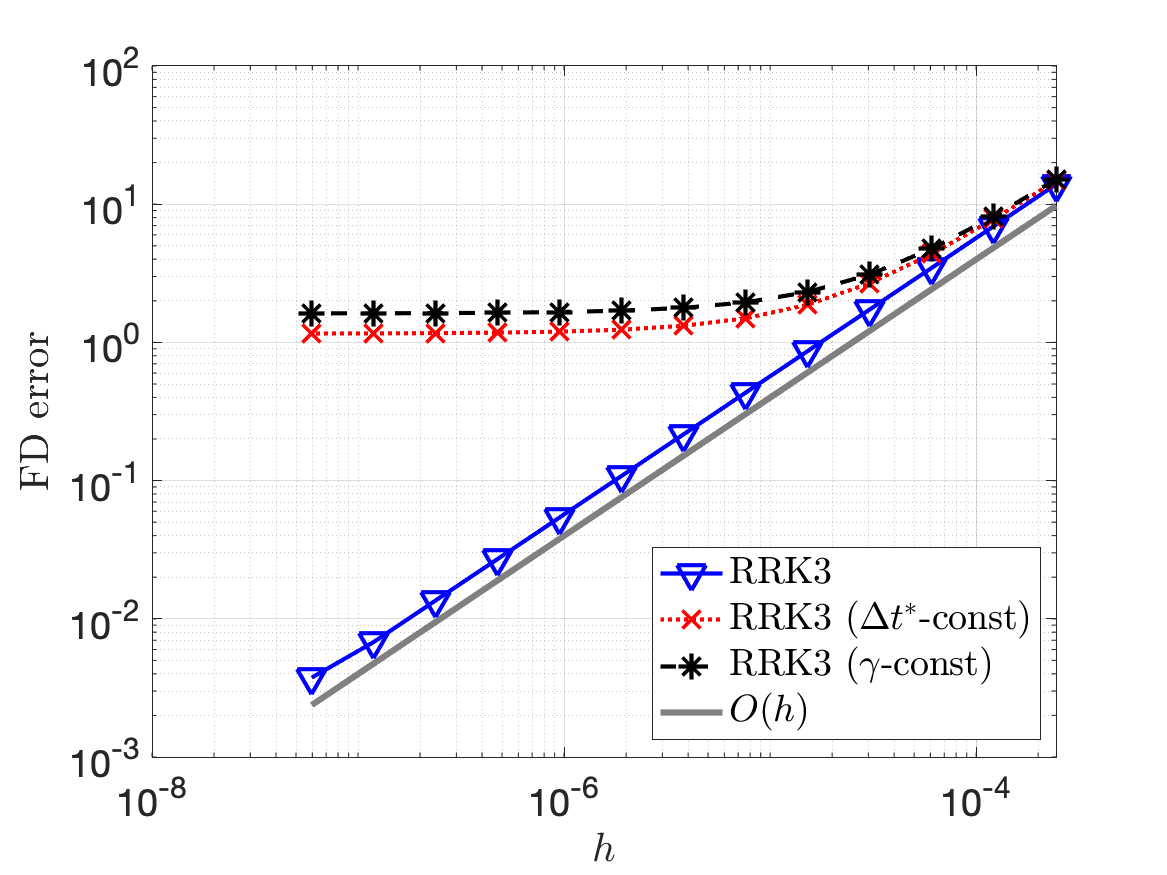}
		\caption{Using RK3.}
		\label{fig:nlpen_linerr_RRK3}
	\end{subfigure}
	\begin{subfigure}{0.45\textwidth}
		\centering
		\includegraphics[width=\textwidth]{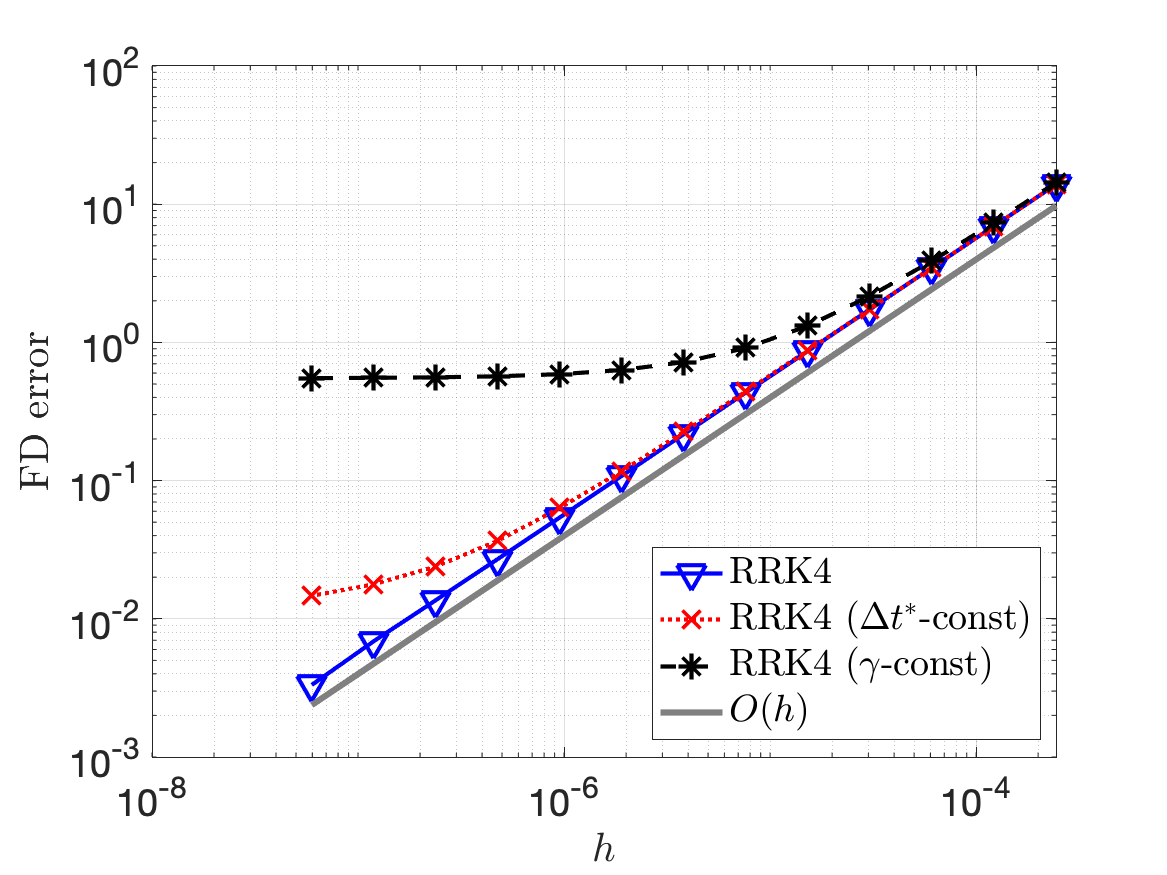}
		\caption{Using RK4.}
		\label{fig:nlpen_linerr_RRK4}
	\end{subfigure}
	\caption{Convergence of the finite difference error \ref{eq:fderr} for RRK with nonlinear pendulum model; $\Delta t = 0.1$ and $T=200$.}
	\label{fig:nlpen_linerr}
\end{figure}

\subsubsection{1D Compressible Euler equations results}

We run the same FD convergence test with proper and improper linearizations of RRK when solving the semi-discretization of the one-dimensional compressible Euler equations \ref{eq:Euler}.
For the step size, we use 
\[
	\Delta t = \text{CFL} \times \frac{h}{C_N}, \quad C_N = \frac{(N+1)^2}{2}
\]
where $h=1/16$ is the size of the DG element, $N=3$ the polynomial order of the DG method, and $\text{CFL}$ is a user-defined constant.
	
Figure~\ref{fig:euler_linerr} demonstrates the FD convergence, or lack of, for different choice of $\text{CFL}$ constant.
As before, linear convergence is achieved clearly under proper linearization.
Larger step sizes reveal that indeed the improper linearizations fail to converge.
Unlike the nonlinear pendulum example, however, the $\Delta t^*$-constant case yielded larger errors than the $\gamma$-constant case.

\begin{figure}[!h]
	\centering
	\begin{subfigure}{0.45\textwidth}
		\centering
		\includegraphics[width=\textwidth]{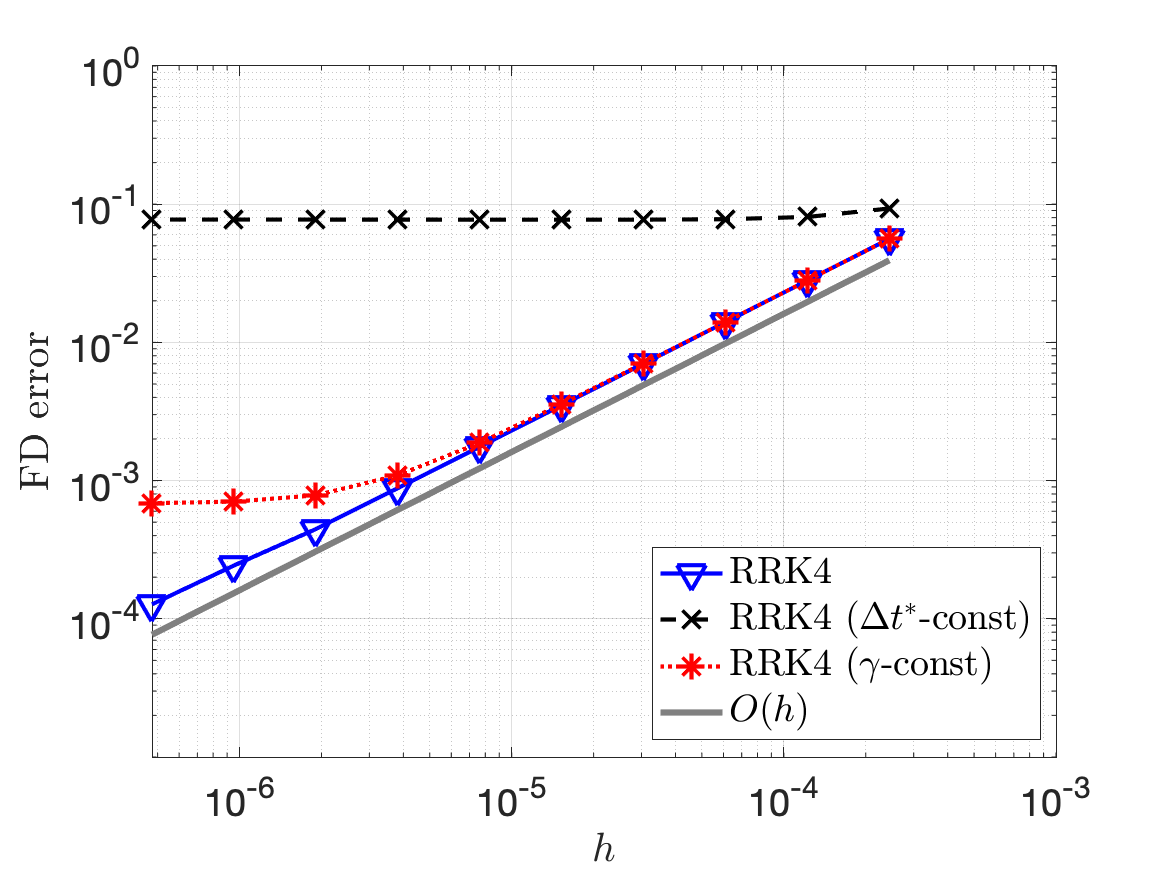}
		\caption{Using RK4, $\text{CFL}=1$.}
		\label{fig:euler_linerr_CFL1}
	\end{subfigure}
	\hspace{1em}
	\begin{subfigure}{0.45\textwidth}
		\centering
		\includegraphics[width=\textwidth]{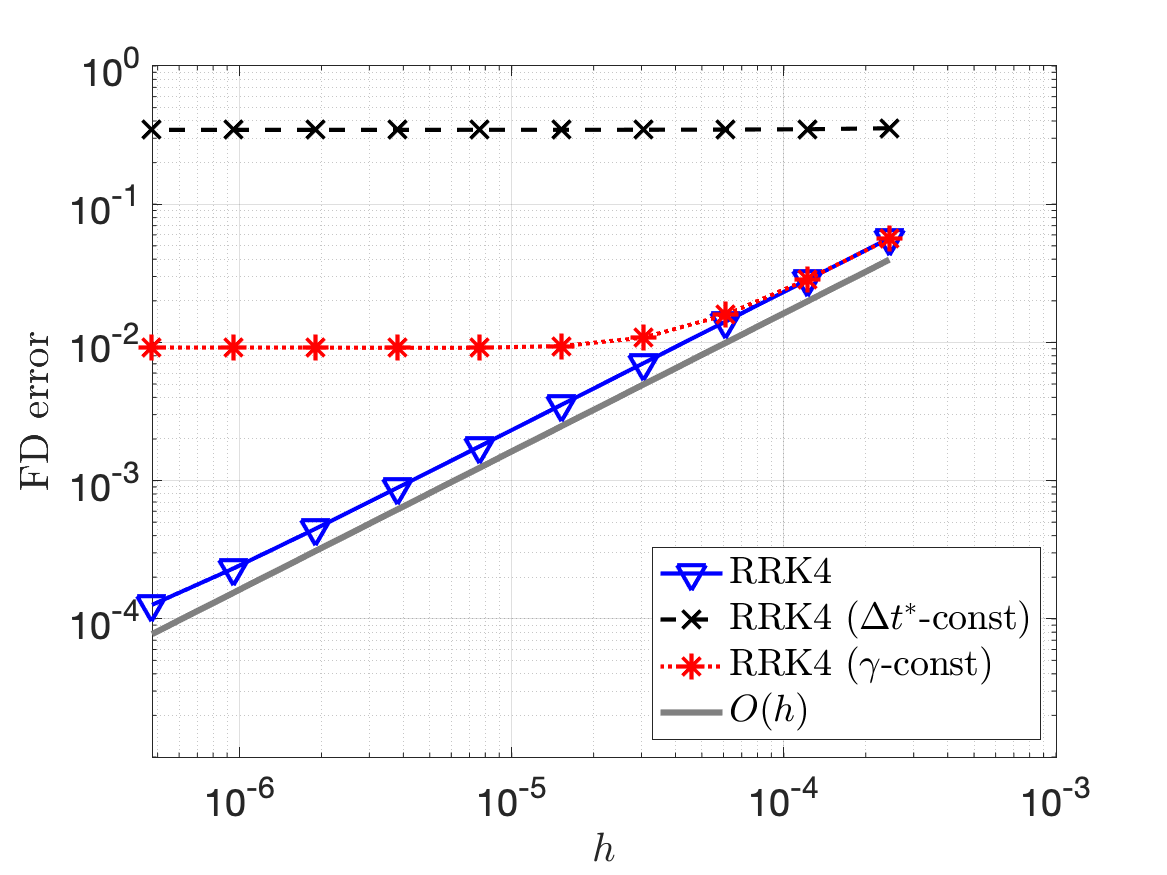}
		\caption{Using RK4, $\text{CFL}=1.5$.}
		\label{fig:euler_linerr_CFL2}
	\end{subfigure}
	\caption{Convergence of the finite difference error \ref{eq:fderr} for RRK with 1D compressible Euler model.}
	\label{fig:euler_linerr}
\end{figure}

\subsection{Consistency of discrete RRK adjoints}

As mentioned in the introduction, the discretize-then-optimize approach yields a discrete adjoint equation \ref{eq:adjeq} for gradient computations of cost functions.
An optimize-then-discretize approach would have resulted in a continuous analog, with some continuous adjoint equation.
The discrete adjoint equation is said to be \textit{consistent} if its solution converges to the solution of the continuous adjoint equation.

Depending on the choice of time-integrator, the resulting adjoint equation may or may not be consistent.
For RK time-integrators, it has been shown that the discrete adjoint equation correspond to an RK discretization (of same order) of the continuous adjoint equation, \cite{Sandu_2006}.
Concerning adaptive time-integrators, \cite{Eberhard_1999,Alexe_2009} argue that taking into consideration the adaptive step-size in the linearization process can result in inconsistent adjoint scheme.
We present here a convergence study of the discrete RRK adjoint to address these concerns and verify (at least numerically) consistency.

Consider control problem \ref{eq:optcont} with cost function
\begin{align*}
	\mathcal C(\fntb u) =  \frac{1}{2}\| \mb y(T)\|^2,
\end{align*}
subject to
\begin{align*}
	\frac{d}{dt} \mat{y_1(t)\\ y_2(t)} &= \mat{-\sin(y_2(t))\\ y_1(t)}, \quad 0<t\le T,\\
	\mb y(0) &= \fntb u,
\end{align*}
where the initial condition is the control variable.
The discretized optimal control problem is given by \ref{eq:optdisc} with
\begin{align*}
	\mathsf C(\fnto y,\fntb u) 
		&= \frac{1}{2} \| \fntb y_K\|^2 = \frac{1}{2} \fnto y^\top \fnts\chi_K \fnts\chi_K^\top \fnto y,\\
	\fntb E(\fnto y,\fntb u) 
		&= \fntb L - \fntb N(\fnto y) - \fnts \chi_0 \fntb u,
\end{align*}
assuming the state equation has been discretized by an RK/RRK scheme, taking $K$ steps to arrive at the final time.
The gradient of the reduced, discrete cost function is
\[
	\nabla \wt{\mathsf C}(\fntb u) = \fnts \chi_0^\top \fnto \lambda = \fnts \lambda_0
\]
where $\fnto\lambda$ is the solution to the discrete adjoint equation,
\begin{equation}\label{eq:adjeq2}
	\paren{ \frac{\partial \fntb E}{\partial \fnto y}(\fnto y,\fntb u)}^\top \fnto \lambda = \fnts\chi_K\fnts\chi_K^\top \fnto y.
\end{equation}
Note that $\fnto y$ in the discrete adjoint problem corresponds to the solution of the forward problem, that is, the state equation $\fntb E(\fnto y,\fntb u) = \fnto 0$.
Moreover, the right-hand-side of the adjoint equation here simply dictates the final time condition, specified by $\fntb y_K$.

In the numerical experiments presented here, we examine the convergence of 
\[
	\text{forward error} = \frac{\|\fntb y_K - \mb y_{\rm ref}(T)\|}{\|\mb y_{\rm ref}(T)\|}, 
	\quad
	\text{adjoint error} = \frac{\|\fntb \lambda_0 -  \lambda_{\rm ref}(0)\|}{\| \lambda_{\rm ref}(0)\|}
\]
as $\Delta t\to 0$, where $\mb y_{\rm ref}$ and $\lambda_{\rm ref}$ are computed using RK4 (and its adjoint) with a small step-size of $\Delta t=10^{-5}$.
Given that RK4 and its adjoint are consistent discretizations of the continuous state and adjoint state equations, we can expect that our reference solution will be sufficiently accurate for these tests.

Figure~\ref{fig:nlpen_conv_RRK_fwd} shows the well expected convergence of RK and RRK schemes of different order for the forward problem.
In the adjoint problem, figure~\ref{fig:nlpen_conv_RRK_adj}, we observe that adjoint RRK, along with its improper linearization variants, is indeed consistent. 
Moreover, optimal convergence rates are achieved by adjoint RRK with proper linearization and the $\gamma$-constant case.
Interestingly enough, we see that we lose an order of convergence in the $\Delta t^*$-constant case.

\begin{figure}[!h]
	\centering
	\includegraphics[width=0.7\textwidth]{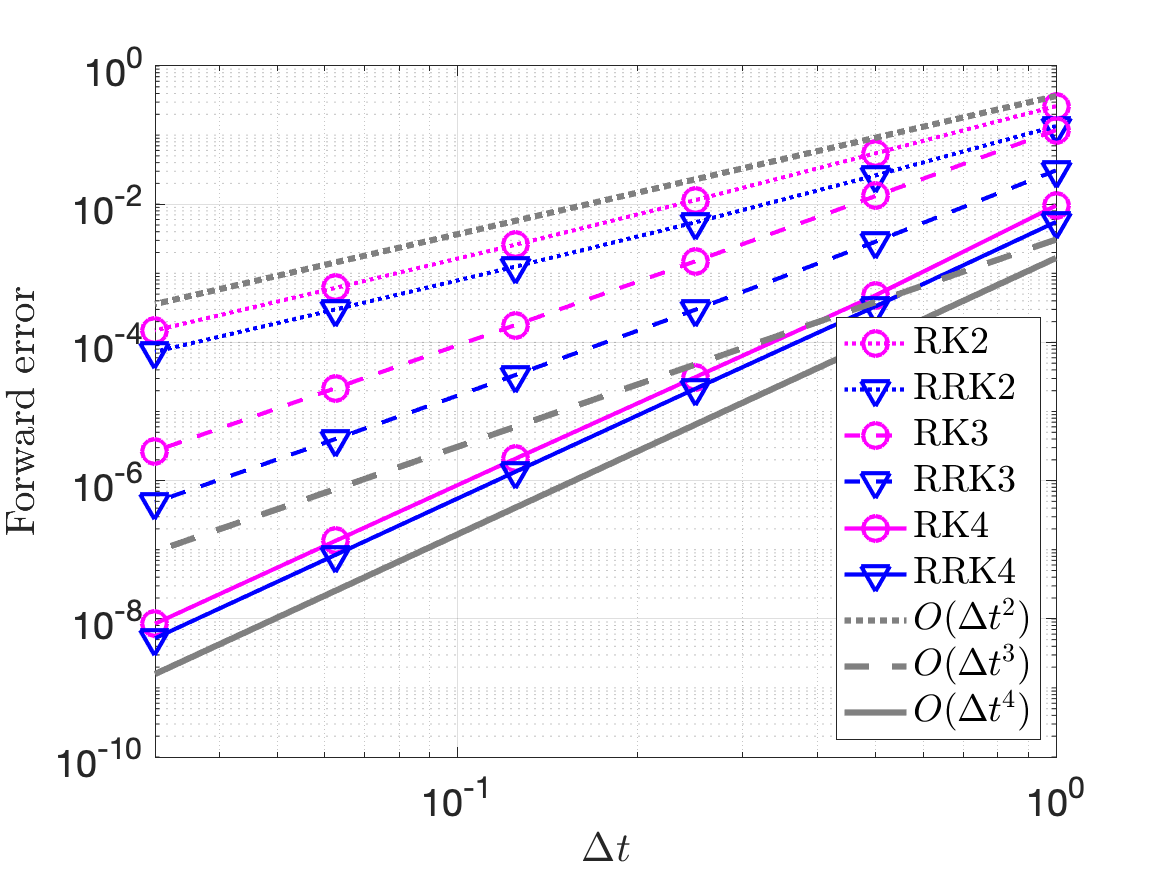}
	\caption{Convergence of RK and RRK discretizations of nonlinear pendulum problem; $T=2$.}
	\label{fig:nlpen_conv_RRK_fwd}
\end{figure}

\begin{figure}[!h]
	\centering	
	\begin{subfigure}{0.45\textwidth}
		\centering
		\includegraphics[width=\textwidth]{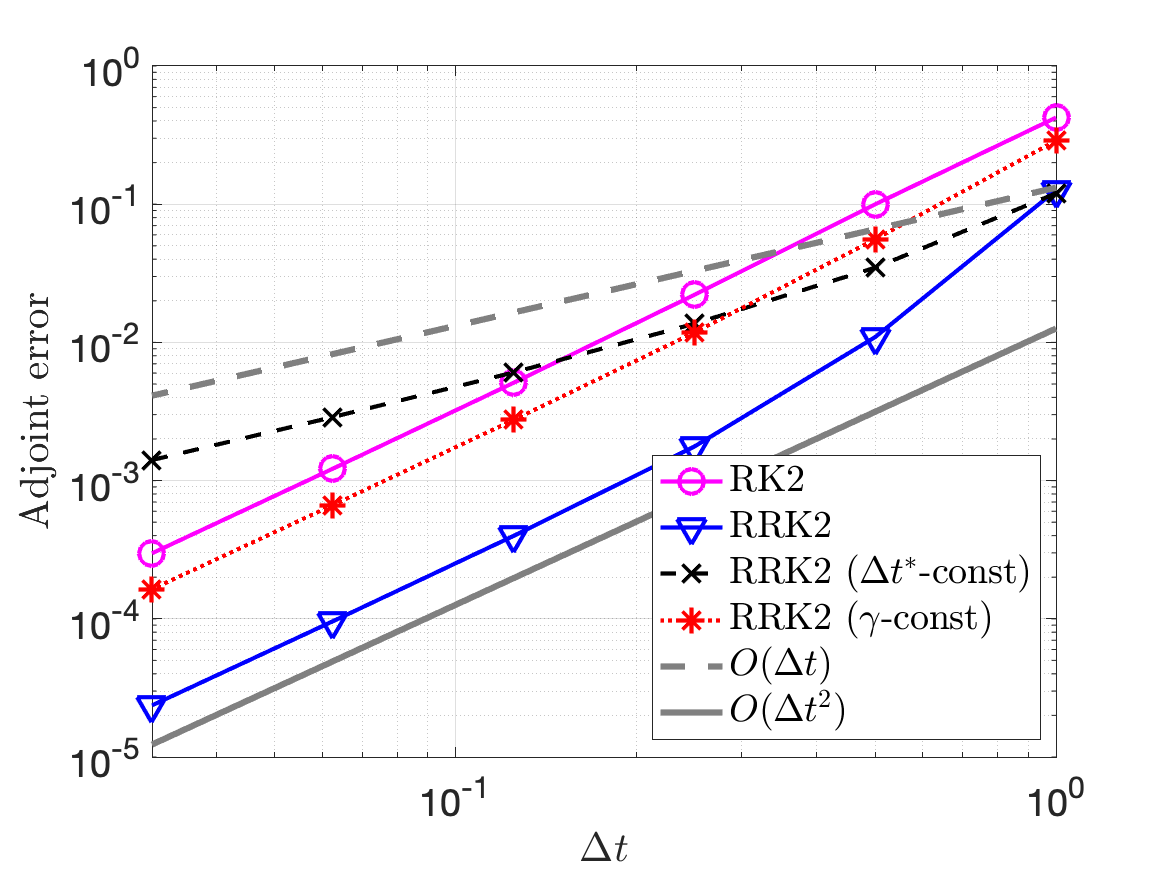}
		\caption{Using RK2.}
		\label{fig:nlpen_conv_RRK2_adj}
	\end{subfigure}
	\hspace{1em}
	\begin{subfigure}{0.45\textwidth}
		\centering
		\includegraphics[width=\textwidth]{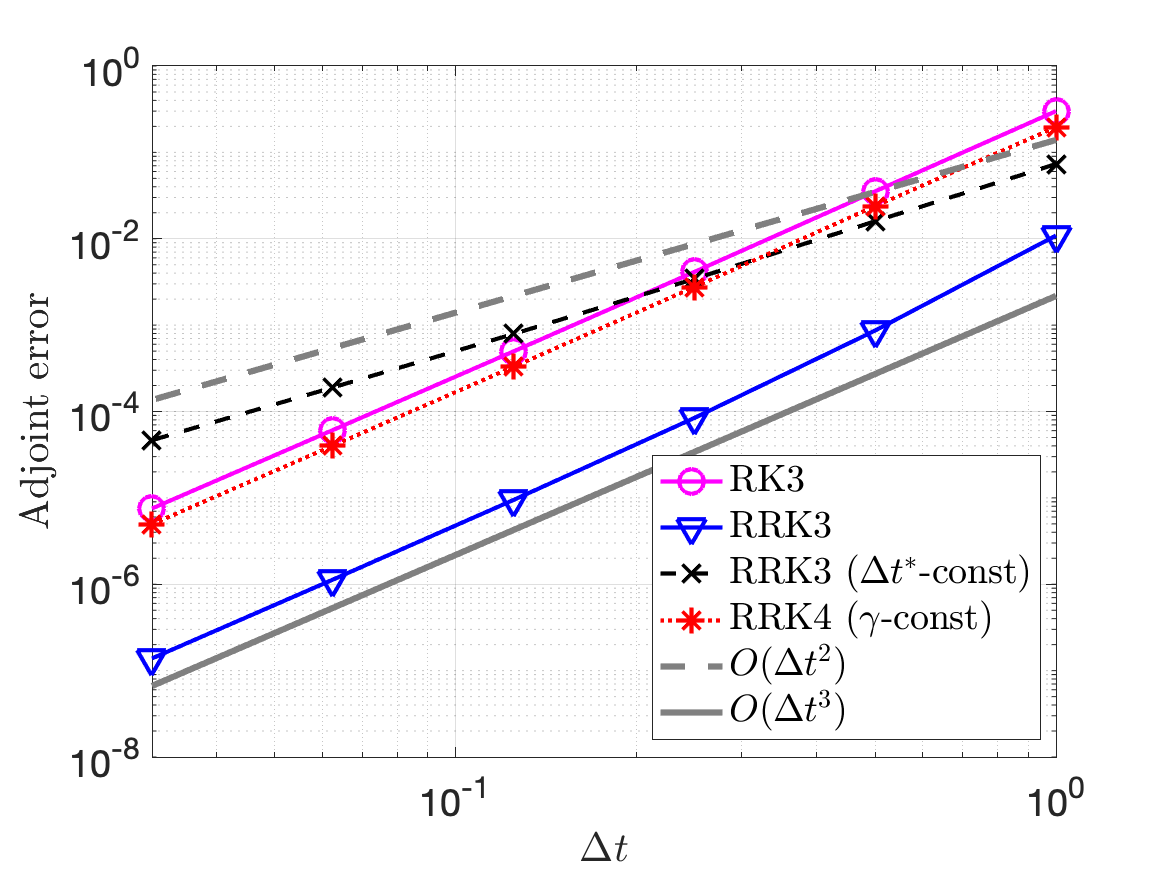}
		\caption{Using RK3.}
		\label{fig:nlpen_conv_RRK3_adj}
	\end{subfigure}
	\\
	\begin{subfigure}{0.45\textwidth}
		\centering
		\includegraphics[width=\textwidth]{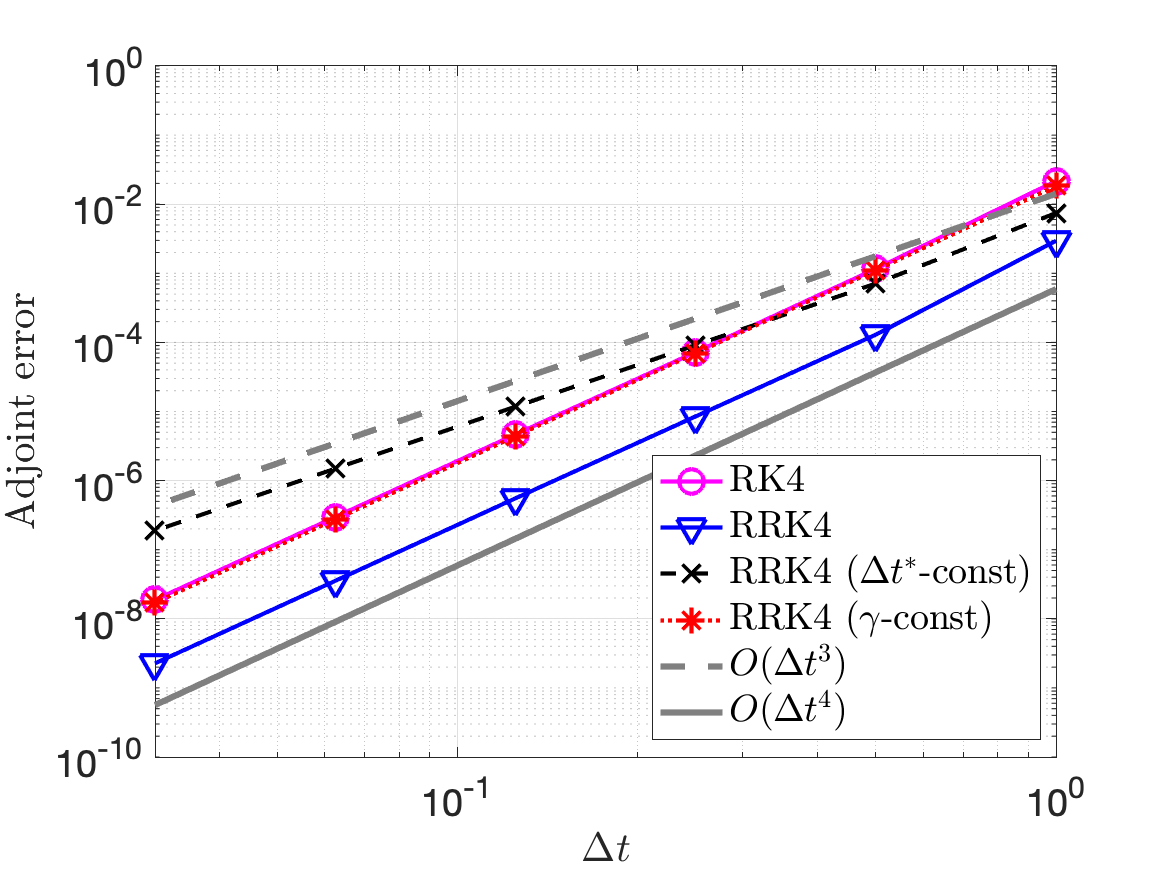}
		\caption{Using RK4.}
		\label{fig:nlpen_conv_RRK4_adj}
	\end{subfigure}
	\caption{Convergence of RK, and improperly linearized RRK, discretizations of the adjoint nonlinear pendulum problem; $T=2$.}
	\label{fig:nlpen_conv_RRK_adj}
\end{figure}

\subsection{Qualitative behavior of adjoint solutions}

The adjoint convergence plots in figure \ref{fig:nlpen_conv_RRK_adj} not only demonstrate that the adjoint RRK method (with proper linearization) is consistent but is also the most accurate out of all of the other methods, including standard RK.
We explore this in these next set of experiments, using again the nonlinear pendulum as our state equations with a much larger final time of $T=200$.

In figure~\ref{fig:norm_adj} and \ref{fig:nlpen_adj_norm_big}, we plot the norm of the adjoint solution as a function or time.
Note that the plots are presented with the time axis in reverse, in spirit with the back-propagation of adjoint numerical methods.
We see that all of the methods for computing the adjoint solution yield consistent results in the case where  we use both RK4 and a small step-size of $\Delta t=0.1$, figure~\ref{fig:nlpen_adj_norm_RRK4}.
However, when using RK2, we begin to see standard RK2 and RRK2 ($\gamma$-constant) deteriorate, figure~\ref{fig:nlpen_adj_norm_RRK2}.
Standard RK2 yields significant differences from the onset but eventually recovers a similar growth in the adjoint solution.
Conversely, RRK2 ($\gamma$-constant) yields reasonably accurate results as it begins stepping backwards in time, but soon after becomes erratic. 
RRK with proper linearization and with $\Delta t^*$-constant produce accurate results for all three choices of RK schemes, at least for the smaller step-size, $\Delta t=0.1$.

For the larger step-size, figure~\ref{fig:nlpen_adj_norm_big}, we can observe more significant differences between RRK with proper linearization and the $\Delta t^*$-constant case, which becomes more apparent as we progress backwards in time.
Moreover, the $\gamma$-constant case results in a substantial growth of the adjoint solution by the time we arrive at the initial time.
The standard adjoint RK4 method yields highly inaccurate results, in part due to the numerical dissipation observed in its forward solution, see figure~\ref{fig:nlpen_fwd_sol_big}.

\begin{figure}[!h]
	\centering
	\begin{subfigure}{0.45\textwidth}
		\centering
		\includegraphics[width=\textwidth]{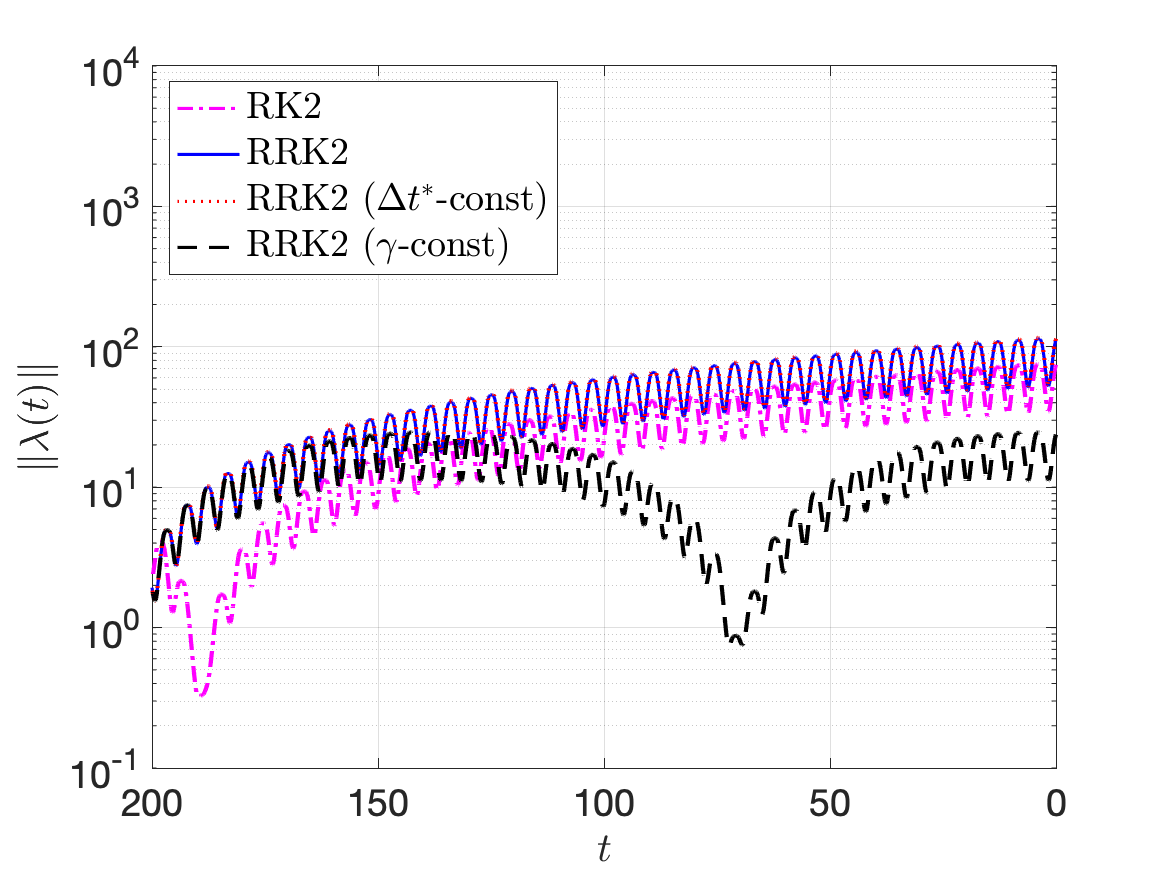}
		\caption{Using RK2.}
		\label{fig:nlpen_adj_norm_RRK2}
	\end{subfigure}
	\hspace{1em}
	\begin{subfigure}{0.45\textwidth}
		\centering
		\includegraphics[width=\textwidth]{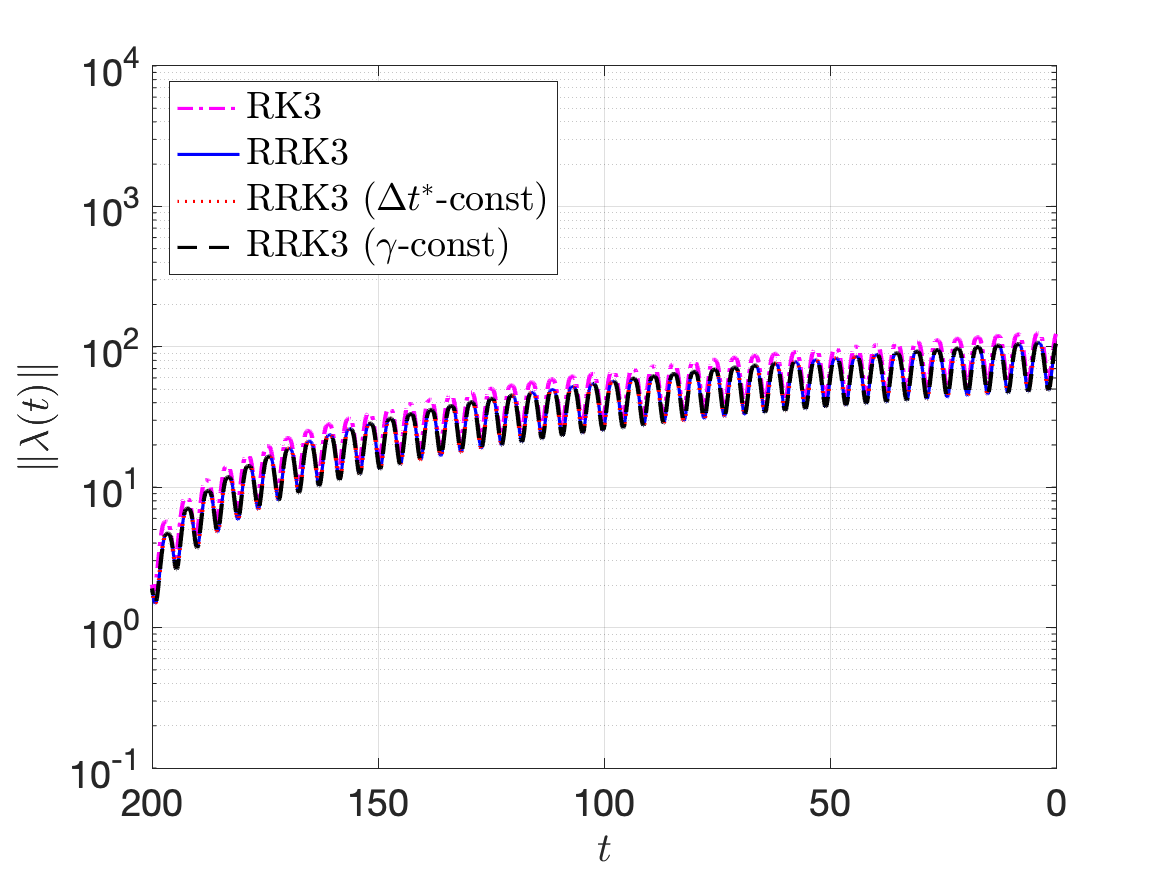}
		\caption{Using RK3.}
		\label{fig:nlpen_adj_norm_RRK3}
	\end{subfigure}
	\\
	\begin{subfigure}{0.45\textwidth}
		\centering
		\includegraphics[width=\textwidth]{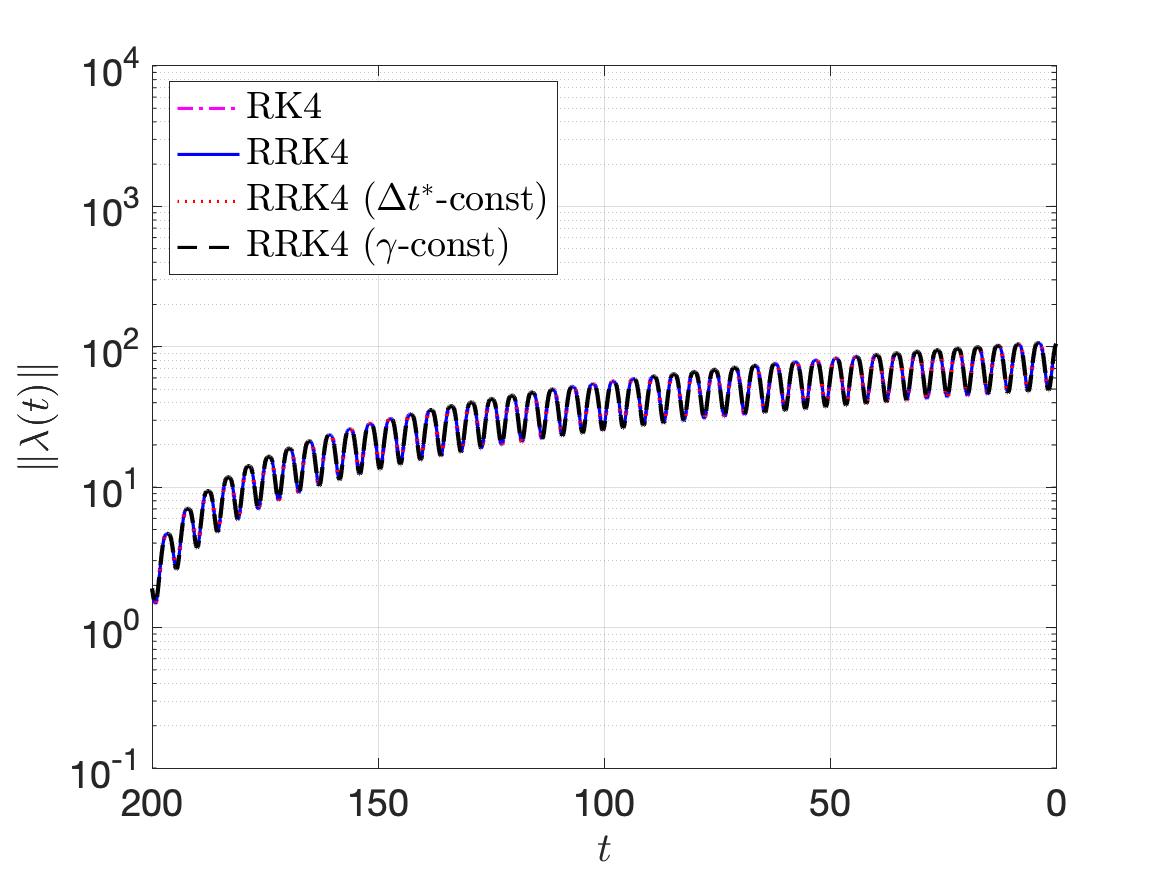}
		\caption{Using RK4.}
		\label{fig:nlpen_adj_norm_RRK4}
	\end{subfigure}

	\caption{Norm of adjoint solutions to the nonlinear pendulum model; $\Delta t=0.1$ and $T=200$.}
	\label{fig:norm_adj}
\end{figure}

\begin{figure}[!h]
	\centering
	\begin{subfigure}{0.45\textwidth}
		\centering
		\includegraphics[width=\textwidth]{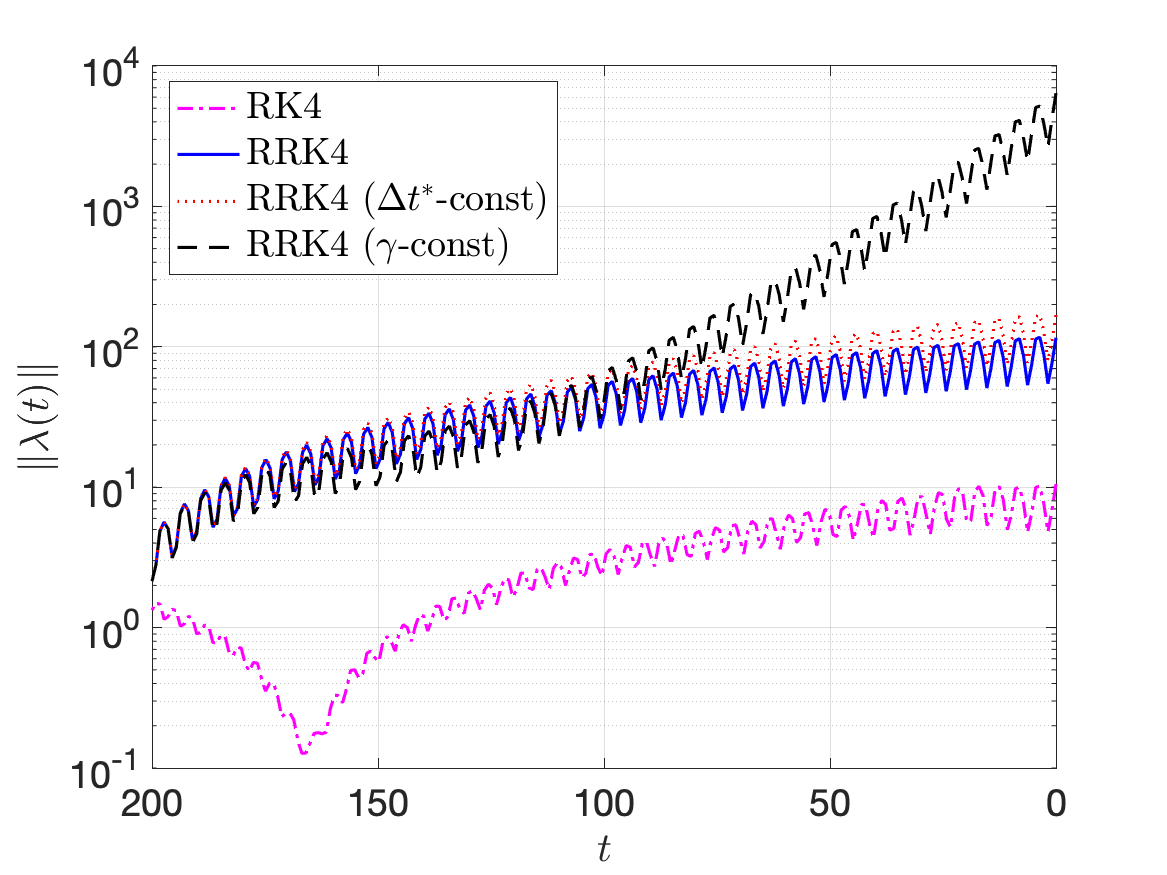}
		\caption{Norm of adjoint solutions.}
		\label{fig:nlpen_adj_norm_big}
	\end{subfigure}
	\hspace{1em}
	\begin{subfigure}{0.45\textwidth}
		\centering
		\includegraphics[width=\textwidth]{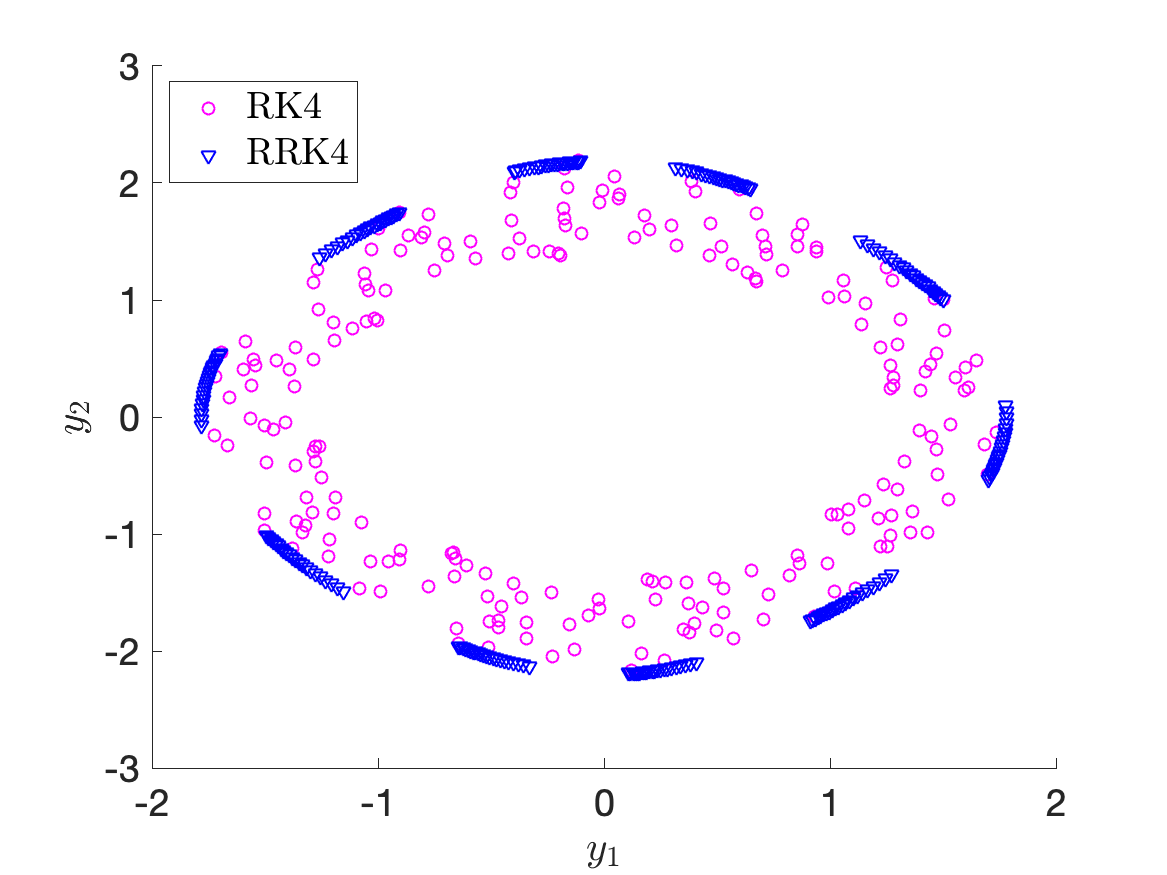}
		\caption{Phase plot of forward solutions.}
		\label{fig:nlpen_fwd_sol_big}
	\end{subfigure}

	\caption{Norm of adjoint solutions and scattered plot of forward solution to the nonlinear pendulum model, using RK4; $\Delta t=0.9$ and $T=200$.}
	\label{fig:nlpen_big}
\end{figure}

\subsection{Preservation of time-symmetry}

The next set of results verify the time-symmetry property of RRK schemes for model skew-symmetric problems \ref{eq:skewsym}, as detailed in theorem \ref{thm:RRKskew2}.
To showcase time-symmetry (or the lack thereof) we solve the skew-symmetric problem and use the numerical solution at the final time as the final-time condition for the adjoint solution.
If a scheme is time symmetric then the initial condition and the numerical adjoint solution should coincide up to machine precision.
Figure \ref{fig:time_reverse} plots the error
\[
	\frac{\| \fntb \lambda_0 - \fntb y_{\rm init}\|}{\|\fntb y_{\rm init}\|}
\]
versus $\Delta t$.

Numerical results demonstrate the ability for RRK to preserve time-symmetry in its adjoint, as observed by the small errors for both explicit and implicit RK schemes.
Time-symmetry is violated in both the standard RK and RRK with $\fntb \gamma$-constant cases.
Interestingly enough, numerical results show that RK4 and RK2 (e.g., even order RK schemes) converge at a rate higher than anticipated (fifth and third order convergence respectively) while RK3 and DIRK3 maintain their third order convergence.
The authors are unaware as to why RK4 and RK2 exhibit this super convergence behavior.
It is also observed that the $\Delta t^*$-constant case demonstrates time-symmetry, which can be explain by the fact that the terms appearing in the proper linearization (specifically scalar $\rho_*$ and $\xi_*$ in Lemma \ref{thm:linRRK} and \ref{thm:adjRRK}) that would otherwise be missing in the $\Delta t^*$-constant case actually zero-out for this linear problem.
In other words, proper linearization and the $\Delta t^*$-constant case are equivalent for this linear problem with square entropy. 

\begin{figure}[!h]
	\centering
	\begin{subfigure}{0.45\textwidth}
		\centering
		\includegraphics[width=\textwidth]{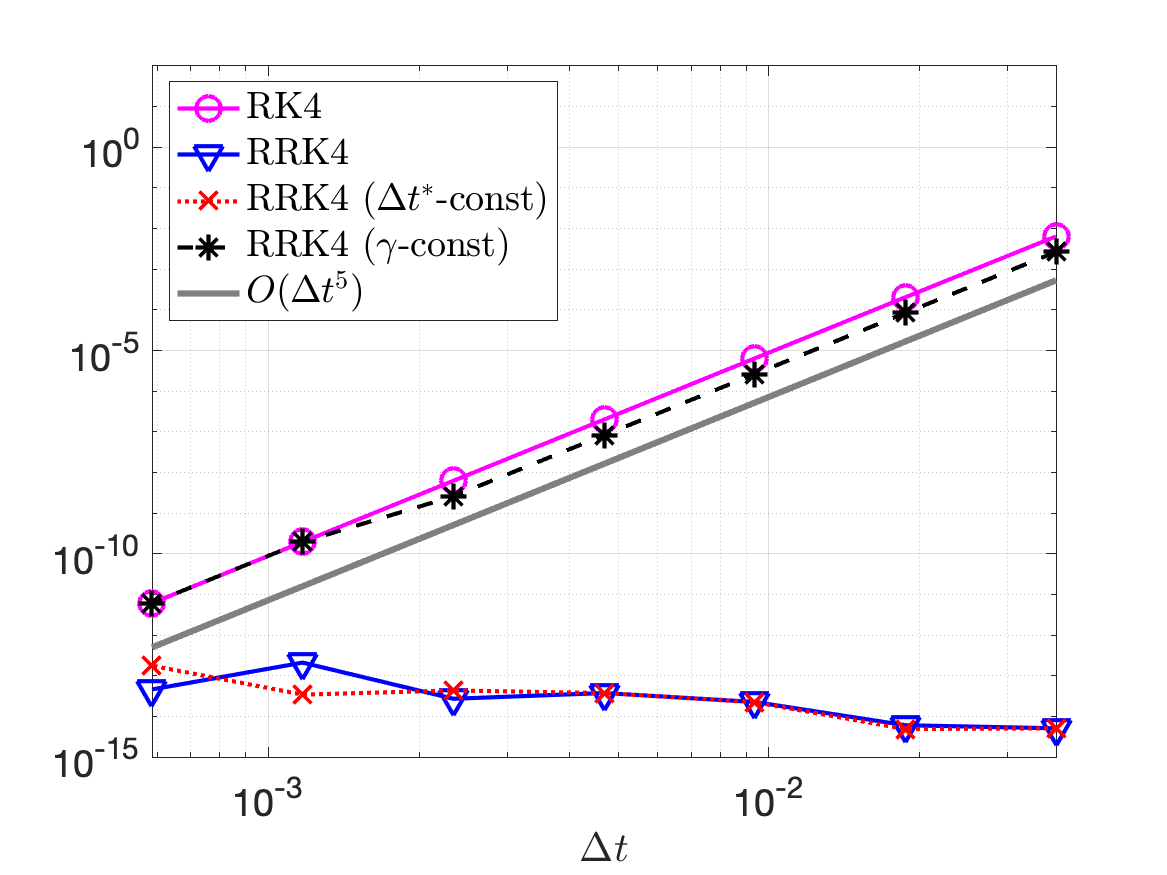}
		\caption{RK4 results.}
		\label{fig:ssymm_RK4}
	\end{subfigure}
	\hspace{1em}
	\begin{subfigure}{0.45\textwidth}
		\centering
		\includegraphics[width=\textwidth]{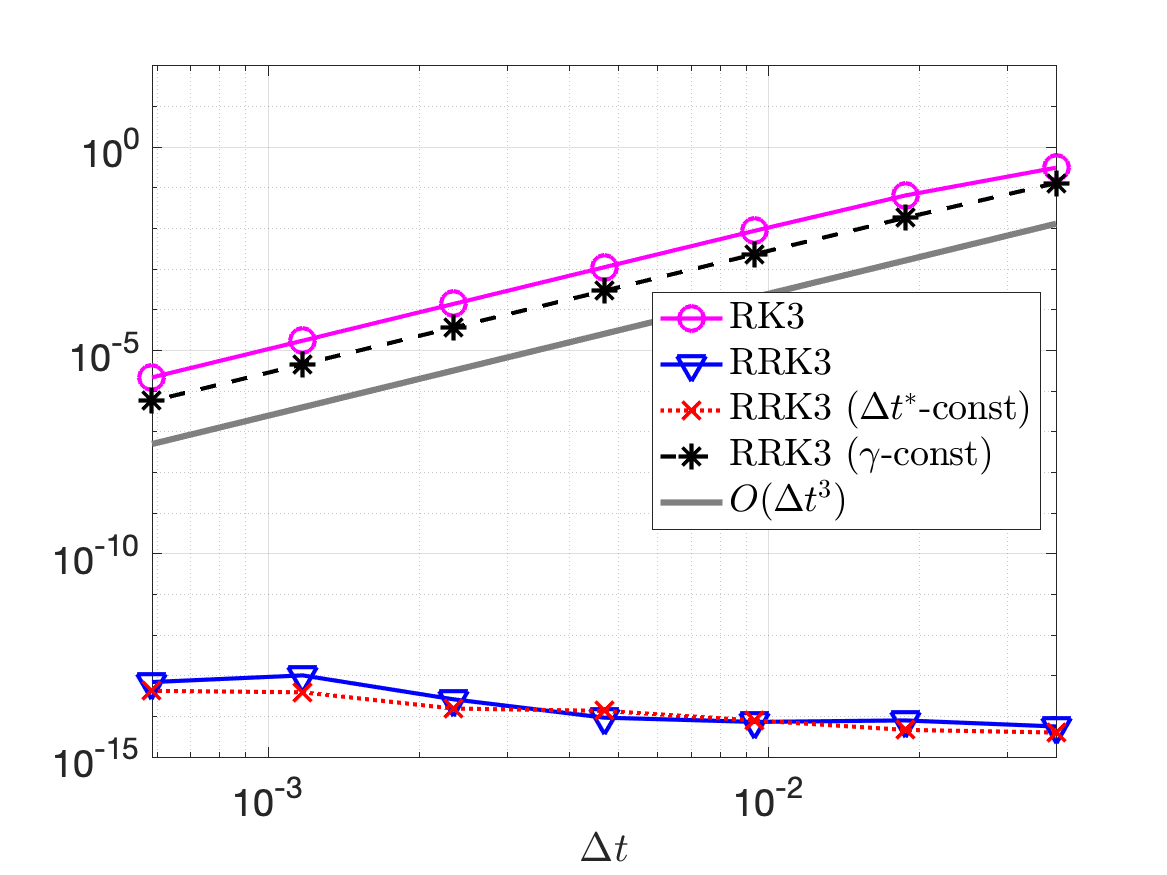}
		\caption{RK3 results.}
		\label{fig:ssymm_RK3}
	\end{subfigure}
	\\
	\begin{subfigure}{0.45\textwidth}
		\centering
		\includegraphics[width=\textwidth]{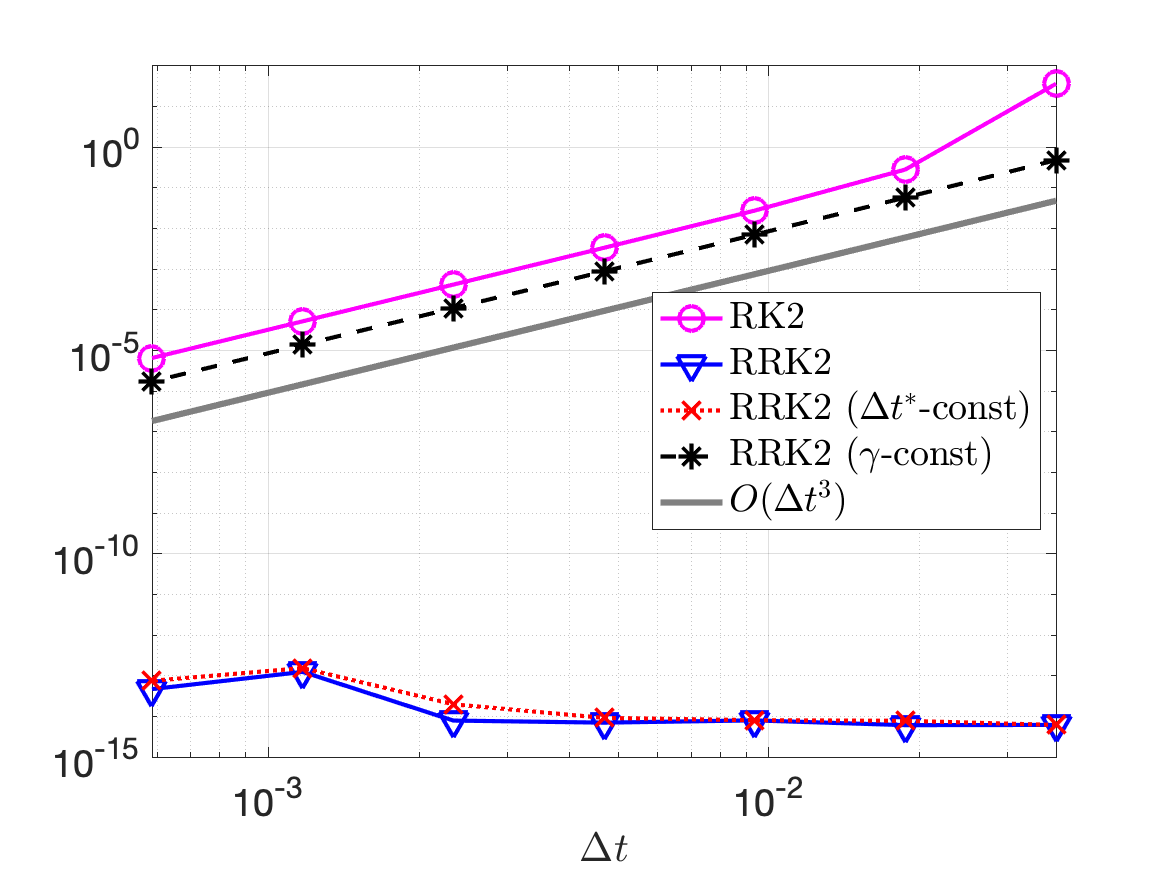}
		\caption{RK2 results.}
		\label{fig:ssymm_RK2}
	\end{subfigure}
	\hspace{1em}
	\begin{subfigure}{0.45\textwidth}
		\centering
		\includegraphics[width=\textwidth]{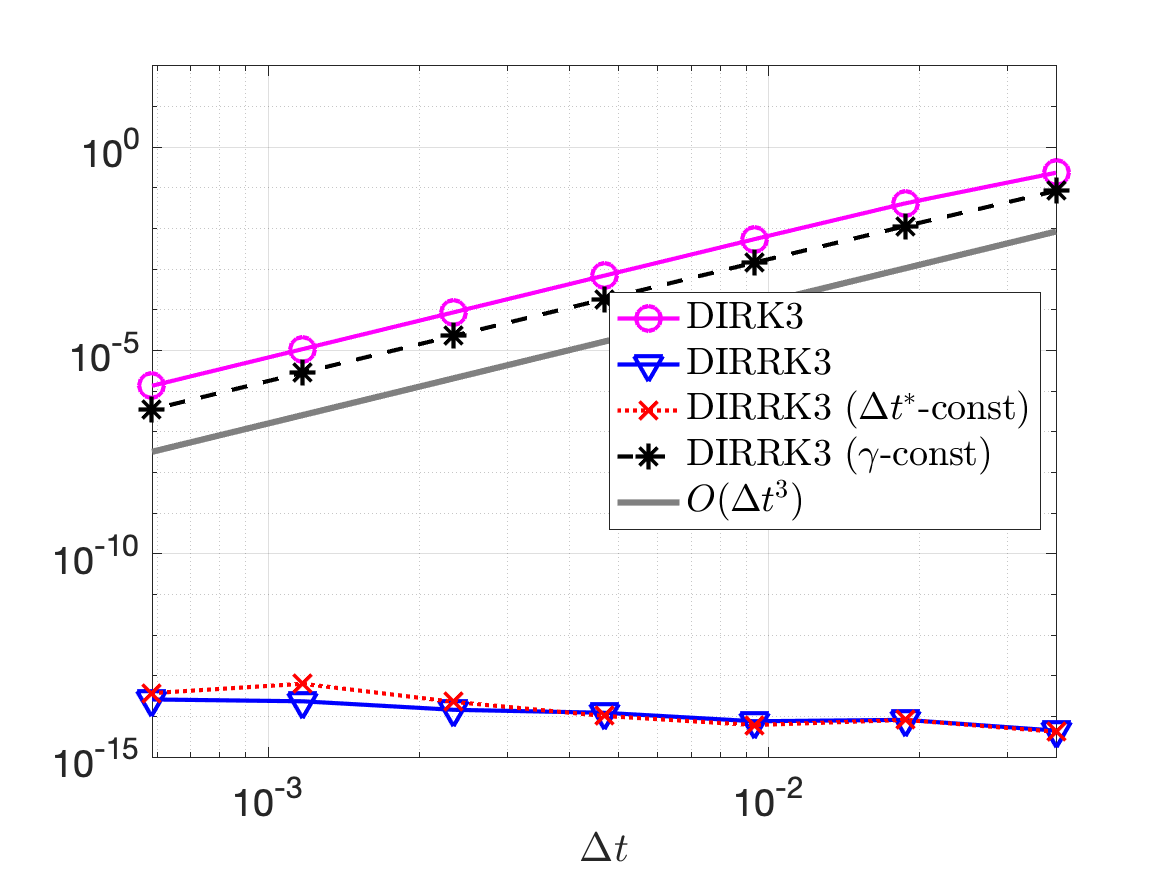}
		\caption{DIRK3 results.}
		\label{fig:ssymm_DIRK3}
	\end{subfigure}
	\caption{Time-symmetry results.}
	\label{fig:time_reverse}
\end{figure}

\pagebreak
\section{Conclusion}
\label{sec:Conclusion}
We have presented the discrete linearization and adjoint of relaxation RK methods.
Our approach is based on implicit differentiation and a global matrix representation of the time-stepping equations.
Even though the relaxation parameter is a nonlinear function of the state variables, we are able to prove that the relaxation RK method is equivalent to its linearization when applied to a skew-symmetric linear problem.
Moreover, the relaxation method is proven to be \textit{time-symmetric} for explicit and diagonally implicit RK schemes on skew-symmetric linear problems, in the sense that the adjoint time-step reverses the forward time-step.
Numerical results also demonstrate the importance of proper linearization, in particular, the importance of taking into account the relaxation parameter, and the corrected final step-size for our implementation of RRK.
Numerical results also show that the discrete RRK adjoint is not only consistent (with optimal convergence), but is more accurate in computing adjoint solutions.

\section*{Appendix}
\label{sec:Appendix}

In this appendix we present in detail derivations of linearization and adjoint computations for RK and its relaxation variant using a matrix representation.
We use an approach and notation similar to \cite{Rothauge_2016}.
The plan is to interpret time-stepping algorithms as solutions to global matrix-vector systems and use the Jacobians of said systems to deduce a time stepping scheme for the discrete linearization and adjoint.

We first recap and introduce some notation.
\begin{itemize}

\item 
$N$ is the dimension of the state vector $\mb y(t)$ in IVP \ref{eq:IVP};

\item 
$K$ is the total number of time steps taken by a given time-stepping scheme;

\item 
$s$ is the number of internal stages for an RK method;

\item
$\fntb A_s, \fntb b_s, \fntb c_s$ are the coefficients of a specified $s$-stage RK method;

\item
The concatenation of vectors indexed by internal stages is denoted by simply removing the internal stage index, e.g.,
\[
	\fntb Y_k \DEF \mat{\fntb Y_{k,1} \\ \vdots \\ \fntb Y_{k,s}} \in\mathbb R^{sN};
\]

\item
Vectors of size $\ol N \DEF N + N(s+1)K$ are denoted using bold font and an overline and have components denoted as follows:
\[
	\fnto y \DEF \mat{\fntb y_0 \\ \fntb Y_1 \\ \fntb y_1 \\ \vdots \\ \fntb Y_{K} \\ \fntb y_{K}},
\]
with $\fntb y_k\in\mathbb R^N$ and $\fntb Y_k\in\mathbb R^{sN}$;

\item 
Matrix $\fnts \chi_k\in\mathbb R^{\ol N\times N}$ is defined such that $\fnts \chi_k^\top \fnto y = \fntb y_k$, extracting the $k$-th step vector.
Moreover, for a given $\fntb v\in\mathbb R^N$, then $\fnto v \DEF \fnts \chi_k \fntb v$ is vector of length $\ol N$ with zero entries everywhere except at $\fntb v_k = \fntb v$;

\item
$\fntb I_{M}$ denotes the $M\times M$ identity matrix;

\item
$\fntb 0_{M}$ is the zero vector of dimension $M$;

\item 
$\fntb 0_{M_1\times M_2}$ is the $M_1\times M_2$ zero matrix;

\item
$\otimes$ denotes the Kronecker product.

\end{itemize}

\subsection*{RK Matrix-representation}

A single step of the RK method, as specified by equations \ref{eq:RK}, can be written in matrix form as
\[
	\mat{ -\fntb C & \fntb I_{sN} &  \\ -\fntb I_N &   & \fntb I_N }
	\mat{\fntb y_{k-1} \\ \fntb Y_{k} \\ \fntb y_k } - \mat{ \fntb A \fntb F_{k} \\ \fntb B^\top \fntb F_{k}} 
	= \mat{ \fntb 0_{sN} \\ \fntb 0_N}
\]
where
\begin{itemize}
	\item $\fntb A \DEF \Delta t \fntb A_s \otimes \fntb I_N \in\mathbb R^{sN\times sN}$,
	\item $\fntb B \DEF \Delta t \fntb b_s \otimes \fntb I_N \in \mathbb R^{sN\times N}$,
	\item $\fntb C \DEF \fntb 1_s \otimes \fntb I_N \in \mathbb R^{sN\times N}$, with $\fntb 1_s \DEF (1,...,1)^\top\in\mathbb R^{s},$
	\item $\fntb F_k$ is the concatenation of the $\fntb F_{k,i}\DEF \mb f(\fntb Y_{k,i},t_{k-1}+c_i \Delta t)$, and subsequently can be viewed as a vector valued function of $\fntb Y_k$.
\end{itemize}
The RK method as a whole can be represented as a concatenation of the matrix systems above, resulting in a global system of time-stepping equations:
\[
	\fntb E(\fnto y) \DEF \fntb L \fnto y - \fntb N(\fnto y) - \fnts \chi_0 \fntb y_{\rm init} = \fnto 0
\]
with
\[
	\fntb L \DEF
	\MAT{\fntb I_N}{-\fntb C}{\fntb I_{sN}}{-\fntb I_N}{}{-\fntb C}{\fntb I_{sN}}{-\fntb I_N}{},
	\quad
	\fntb N(\fnto y) \DEF
	\mat{\fntb 0_{N} \\ \hline \fntb A \fntb F_1 \\ \fntb B^\top \fntb F_1\\ \hline \fntb A\fntb F_2 \\ \fntb B^\top \fntb F_2 \\ \hline  \vdots }.
\]
We make some remarks and observations:
\begin{itemize}
	\item 
	Boxes are meant to help visually separate blocks associated with different time steps in both $\fntb L$ and $\fntb N(\fnto y)$.
	
	\item 
	The unboxed $\fntb I_N$ in the top left corner of $\fntb L$ is related to the enforcement of the initial condition.

	\item 
	$\fntb L\in\mathbb R^{\ol N\times \ol N}$ is lower unit triangular, though not quite block diagonal due to some slight overlap in columns.

	\item 
	Given the repeating block structure of matrices presented here, we only write out the blocks associated the initial/final conditions and two subsequent time steps.
	Dots indicate a repeating pattern with the understanding that the block structure repeats $K$ times with appropriate indexing when relevant.

	\item 
	$\fntb N:\mathbb R^{\ol N} \to \mathbb R^{\ol N}$ is block lower triangular in the sense that the $k$-th $N(s+1)$ block of the output does not depend on the $j$-th $N(s+1)$ block of the input, for $j>k$.
	For example, the $N(s+1)$ block of $\fntb N(\fnto y)$ associated with the $k$-th time step, is
\[
	\mat{\fntb A\fntb F_k \\ \fntb B^\top\fntb F_k}
\]
which only depends on $\fntb Y_k$, and not on $\fnto y$ at later time steps.
\end{itemize}

Linearized RK formulas \ref{eq:linRK} are derived by computing the Jacobian, $\fntb E'$, and interpreting the solution to linear system $\fntb E'(\fnto y) \fnto \delta = \fnto w$ (via forward substitution) as a time-stepping algorithm.
The Jacobian of the global time-stepping equation operator $\fntb E(\fnto y)$ is given by
\[
	\fntb E'(\fnto y) = \fntb L - \fntb N'(\fnto y)
\]
with
\[
	\fntb N'(\fnto y) = 
\MAT{\fntb 0_{N\times N}}{}{\fntb A\fntb J_1}{}{\fntb B^\top \fntb J_1}{}{\fntb A\fntb J_2}{}{\fntb B^\top \fntb J_2}
\]
\begin{align*}
	\fntb J_k &\DEF {\rm diag}(\fntb J_{k,1}, \cdots, \fntb J_{k,s}) \in \mathbb R^{sN\times sN},\\
	\fntb J_{k,i} &\DEF \frac{\partial \mb f}{\partial \mb y}(\fntb Y_{k,i},t_{k-1}+c_i\Delta t) \in \mathbb R^{N\times N}.
\end{align*}
We see that the block lower triangular structure of the operator $\fntb N$ yields a proper block lower triangular Jacobian $\fntb N'$.
Putting it together, we have
\[
	\fntb E'(\fnto y) = \MAT{\fntb I_N}{-\fntb C}{\fntb I_{sN}-\fntb A\fntb J_1}{-\fntb I_N}{-\fntb B^\top \fntb J_1}{-\fntb C}{\fntb I_{sN}-\fntb A\fntb J_2}{-\fntb I_N}{-\fntb B^\top \fntb J_2}
\]
In particular, each time step is associated with solving the following system for $\fnts \Delta_k$ and $\fnts \delta_k$, with $\fnts \delta_{k-1}$ given by the previous time step:
\[
\begin{split}
	& \quad 
		\mat{-\fntb C & \fntb I_{sN} - \fntb A \fntb J_k \\ -\fntb I_N & -\fntb B^\top \fntb J_k & \fntb I_N}
		\mat{\fnts \delta_{k-1} \\ \fnts \Delta_k \\ \fnts\delta_{k}} =
		\mat{\fntb W_k\\ \fntb w_k},\\
	\implies & \quad
	\left\{\begin{split}
		\fnts \Delta_{k,i} &= \fnts \delta_{k-1} + \Delta t \sum_{j=1}^s a_{ij} \fntb J_{k,j}\fnts \Delta_{k,j} + \fntb W_{k,i},\quad i=1,...,s, \\
		\fnts \delta_k &= \fnts \delta_{k-1} + \Delta t\sum_{i=1}^s b_i \fntb J_{k,i} \fnts \Delta_{k,i} + \fntb w_k.
	\end{split}\right.
\end{split}
\]

Adjoint RK formulas \ref{eq:adjRK} are derived in a similar fashion, but with the transpose of $\fntb E'$, which results in a block upper triangular matrix,
\[
	\fntb E'(\fnto y)^\top = \MATT{\fntb I_N}{-\fntb C^\top}{-\fntb I_N}{\fntb I_{sN}-\fntb J^\top_{K-1}\fntb A^\top}{-\fntb J^\top_{K-1}\fntb B}{-\fntb C^\top}{-\fntb I_N}{\fntb I_{sN}-\fntb J^\top_{K}\fntb A^\top}{-\fntb J^\top_{K}\fntb B}.
\]
Analogous to $\fntb L$, we see a repeating block structure with overlapping columns, though with an identity block at the lower right corner, associated with the final time condition.
We interpret the solution to linear system $\fntb E'(\fnto y)^\top \fnto\lambda=\fnto w$ (via back substitution) as a time-stepping algorithm running in reverse time.
Each time step is associated with solving the following system for $\fnts \Lambda_{k}$ and $\fnts \lambda_{k-1}$, with $\fnts \lambda_{k}$ given by the previous adjoint time step:
\[
		\mat{ \fntb I_N & -\fntb C^\top & -\fntb I_N \\ & \fntb I_{sN}-\fntb J^\top_{k}\fntb A^\top & -\fntb J^\top_{k}\fntb B}
		\mat{\fnts \lambda_{k-1} \\ \fnts \Lambda_k \\ \fnts \lambda_k} =
		\mat{\fntb w_{k-1} \\ \fntb W_k},
\]
\[
	\implies \quad
		\left\{\begin{split}
			\fnts \lambda_{k-1} &= \fnts \lambda_k + \sum_{i=1}^s \fnts \Lambda_{k,i} + \fntb w_{k-1}, \\
			\fnts \Lambda_{k,i} &= b_i \Delta t \fntb J_{k,i}^\top \fnts \lambda_{k} + \Delta t \sum_{j=1}^s a_{ji}\fntb J^\top_{k,i} \fnts \Lambda_{k,j} + \fntb W_{k,i}, \quad i=1,...,s.
		\end{split}\right.
\]

\subsection*{IDT matrix representation}

The matrix representation of the IDT method is very similar to what we derived for RK,
\[
	\fntb E(\fnto y,\fnts \gamma) \DEF \fntb L\fnto y - \fntb N(\fnto y,\fnts \gamma) - \fnts \chi_0 \fntb y_{\rm init} = \fnto 0
\]
where the relaxation parameters $\fnts \gamma = (\gamma_1,...,\gamma_k)$ appear on the (nonlinear) term $\fntb N$ only, i.e.,
\begin{equation}\label{eq:N_IDT}
	\fntb N(\fnto y,\fntb\gamma) \DEF \mat{ \fntb 0_{N} \\ \hline \fntb A\fntb F_1 \\ \gamma_1\fntb B^\top \fntb F_1\\ \hline \fntb A\fntb F_2 \\ \gamma_2\fntb B^\top\fntb F_2 \\ \hline \vdots }.
\end{equation}
Recall that $\gamma_k$ is defined as the positive root near $1$ (for $\Delta t$ small enough) of the root function $r(\gamma; \fntb y_{k-1},\fntb Y_k)$, equation \ref{eq:root}.
In other words the relaxation parameters depend implicitly on $\fnto y$, i.e., $\fnts \gamma=\fnts \gamma(\fnto y)$.
Let $\wt{\fntb E}(\fnto y)$ denote the reduced state-equation operator, that is,
\[
	\wt{\fntb E}(\fnto y) \DEF \fntb E(\fnto y,\fnts\gamma(\fnto y)).
\]
The Jacobian is given by
\[
	\wt{\fntb E}'(\fnto y) 
	= \fntb L - \underbrace{\left(\frac{\partial\fntb N}{\partial\fnto y}(\fnto y,\fnts \gamma(\fnto y)) \red{\, + \, \frac{\partial\fntb N}{\partial\fnts \gamma}(\fnto y,\fnts \gamma(\fnto y)) \fnts \gamma'(\fnto y)}\right)}_{\wt{\fntb N}'(\fnto y)}.
\]
In particular,
\[
\wt{\fntb N}'(\fnto y) = 
\MAT{\fntb 0_{N\times N}}{}{\fntb A\fntb J_1}{\red{\fnts \Gamma_{y,1}}}{\gamma_1\fntb B^\top\fntb J_1 \red{\,+\, \fnts \Gamma_{Y,1}}}{}{\fntb A\fntb J_2}{\red{\fnts \Gamma_{y,2}}}{\gamma_2\fntb B^\top\fntb J_2 \red{\,+\, \fnts \Gamma_{Y,2}}}
\]
where
\[
	\fnts \Gamma_{y,k} \DEF \fntb B^\top\fntb F_k (\nabla_{y}\gamma_k)^\top,
	\quad 
	\fnts \Gamma_{Y,k} \DEF \fntb B^\top\fntb F_k (\nabla_{Y}\gamma_k)^\top
\]
are associated with the term $\red{\frac{\partial \fntb N}{\partial \fnto y}\fnts \gamma'}$, which we highlight using red text.
Again, gradient terms in $\fnts \Gamma_{y,k}$ and $\fnts \Gamma_{Y,k}$ correspond to gradients of $\gamma_k$ with respect to $\fntb y_{k-1}$ and $\fntb Y_k$ respectively, and are computed via implicit differentiation; see equations \ref{eq:gradgamma} and \ref{eq:dr}.

The Jacobian matrix for IDT is thus given by
\[
\wt{\fntb E}'(\fnto y) = 
	\MAT{\fntb I_N}{-\fntb C}{\fntb I_{sN}-\fntb A\fntb J_1}{-\fntb I_N\red{\,-\,\fnts \Gamma_{y,1}}}{-\gamma_1\fntb B^\top\fntb J_1 \red{\,-\,\fnts \Gamma_{Y,1}}}{-\fntb C}{\fntb I_{sN}-\fntb A\fntb J_2}{-\fntb I_N \red{\,-\,\fnts \Gamma_{y,2}}}{-\gamma_2\fntb B^\top\fntb J_2 \red{\,-\,\fnts \Gamma_{Y,2}}}.
\]
Solving $\wt{\fntb E}'(\fnto y)\fnto \delta = \fnto w$ via forward substitution results in solving at each time step the following system for $\fnts \Delta_k$ and $\fnts \delta_k$, with $\fnts \delta_{k-1}$ given by the previous time step, deriving the linearized IDT formulas in lemma \ref{thm:linIDT}:
\[
\begin{split}
	&\quad
	\mat{-\fntb C & \fntb I_{sN}-\fntb A\fntb J_k \\ -\fntb I_N \red{ - \fnts \Gamma_{y,k}} & -\gamma_k\fntb B^\top \fntb J_k \red{ - \fnts \Gamma_{Y,k}} & \fntb I_N}
	\mat{\fnts \delta_{k-1}\\ \fnts \Delta_k \\ \fnts \delta_k} =
	\mat{\fntb W_k\\ \fntb w_k} \\
	\implies &\quad
	\left\{\begin{split}
		\fnts \Delta_{k,i} &= \fnts \delta_{k-1} + \Delta t \sum_{j=1}^s a_{ij} \fntb J_{k,j} \fnts \Delta_{k,j} + \fntb W_{k,i}, \quad i=1,...,s,\\
		\fnts \delta_k &= \fnts \delta_{k-1} + \gamma_k \Delta t\sum_{i=1}^s b_i \fntb J_{k,i} \fnts \Delta_{k,i} \red{\,+\, \fnts \Gamma_{y,k}\fnts \delta_{k-1} + \fnts \Gamma_{Y,k}\fnts \Delta_k } + \fntb w_k.
	\end{split}\right.
\end{split}
\]
Note that
\[
	\fnts \Gamma_{y,k}\fnts \delta_{k-1} + \fnts \Gamma_{Y,k}\fnts \Delta_k =
	\rho_k \Delta t\sum_{i=1}^s b_i \fntb F_{k,i}
\]
with scalar $\rho_k \DEF \nabla_{y} \gamma_k^\top\fnts \delta_{k-1}+ \nabla_{Y} \gamma_k^\top\fnts \Delta_k$.

The transpose of the Jacobian is given by
{\footnotesize
\[
\wt{\fntb E}'(\fnto y)^\top = 
	\MATT{\fntb I_N}{-\fntb C^\top}{-\fntb I_N \red{-\fnts \Gamma^\top_{y,K-1}}}{\fntb I_{sN}-\fntb J_{K-1}^\top\fntb A^\top}{-\gamma_{K-1} \fntb J^\top_{K-1}\fntb B \red{- \fnts \Gamma^\top_{Y,K-1}}}{-\fntb C^\top}{-\fntb I_N \red{-\fnts \Gamma^\top_{y,K}}}{\fntb I_{sN}-\fntb J^\top_{K}\fntb A^\top}{-\gamma_{K}\fntb J^\top_{K}\fntb B \red{- \fnts \Gamma^\top_{Y,K}}}.
\]}
Solving $\wt{\fntb E}'(\fnto y)^\top\fnto \lambda = \fnto w$ via back substitution results in solving at each time step the following system for $\fnts \Lambda_k$ and $\fnts \lambda_k-1$, with $\fnts \lambda_{k}$ given by the previous time step, deriving the adjoint IDT formulas in lemma \ref{thm:adjIDT}:
\[
\begin{split}
	& \quad
	\mat{\fntb I_N & -\fntb C^\top & - \fntb I_N \red{\,-\, \fnts \Gamma_{y,k}^\top} \\ & \fntb I_{sN}-\fntb J_{k}^\top \fntb A^\top & -\gamma_k \fntb J_k^\top \fntb B \red{\,-\, \fnts \Gamma_{Y,k}^\top}}
	\mat{\fnts \lambda_{k-1} \\ \fnts \Lambda_k \\ \fnts \lambda_k} = 
	\mat{\fntb w_{k-1} \\ \fntb W_k} \\ 
	\implies &\quad
	\left\{\begin{split}
		\fnts \lambda_{k-1} &= \sum_{i=1}^s \fnts \Lambda_{k,i} + \fnts \lambda_k  \red{\,+\, \fnts \Gamma_{y,k}^\top \fnts \lambda_k} + \fntb w_{k-1}, \\
		\fnts \Lambda_{k,i} &=  \Delta t \, \fntb J_{k,i}^\top \sum_{j=1}^s a_{ji} \fnts \Lambda_{k,j} + \gamma_k b_i \Delta t\, \fntb J_{k,i}^\top \fnts \lambda_k  \red{ \,+\, \fnts \Gamma_{Y,k}^\top \fnts \lambda_k} + \fntb W_{k,i}, \quad i=1,...,s.
	\end{split}\right.
\end{split}
\]
Note that
\begin{align*}
	\fnts \Gamma_{y,k}^\top \fntb \lambda_{k} 
		&= (\nabla_{y}\gamma_k) \; \fntb F_k^\top \fntb B \fnts \lambda_{k}
		 	= \xi_k (\nabla_{y}\gamma_k)\\
	\fnts \Gamma_{Y,k}^\top \fntb \lambda_{k} 
		&= (\nabla_{Y}\gamma_k) \fntb F_k^\top \fntb B \fnts \lambda_{k}
			= \xi_{k} (\nabla_{Y}\gamma_k)
\end{align*}
with scalar
\[
	\xi_{k} \DEF \fntb F_{k}^\top \fntb B \fnts \lambda_k = \Delta t \sum_{i=1}^s b_i \fntb F_{k,i}^\top \fnts \lambda_{k}.
\]

\subsection{RRK matrix representation}

The matrix representation for RRK is quite similar to what we derived for IDT:
\[
	\fntb E(\fnto y,\fnts \gamma, \Delta t^*) \DEF \fntb L \fnto y - \fntb N(\fnto y,\fnts \gamma, \Delta t^*) - \fnts \chi_0 \fntb y_{\rm init} = \fnto 0,
\]
where we have made the operator $\fntb N$ dependent on the modified step size $\Delta t^* \DEF T-t_{K-1}$ as well.
Only the last $N(s+1)$ rows of $\fntb N$, associated with the last time step, differ from what was presented in the IDT case (equation \ref{eq:N_IDT}).
These last  last $N(s+1)$ rows of $\fntb N$ are specified by
\[
	\mat{ \fntb A_*\fntb F_K \\ 
		 \gamma_K\fntb B_*^\top \fntb F_K }
\]
with
\[
\begin{split}
	\fntb A_* &\DEF \Delta t^* \fntb A_s \otimes \fntb I_N = \frac{\Delta t^*}{\Delta t} \fntb A,\\
	\fntb B_* &\DEF \Delta t^* \fntb b_s \otimes \fntb I_N = \frac{\Delta t^*}{\Delta t} \fntb B.
\end{split}
\]

Recall that in RRK we have $t_k = t_{k-1} + \gamma_k \Delta t$ for $k=1,...,K-1$, and hence
\[
	\Delta t^* = T - t_0 - \Delta t \sum_{\ell=1}^{K-1} \gamma_\ell
\]
which makes $\Delta t^*$ a function of $\fntb y_{\ell-1}$ and $\fntb Y_\ell$ for $\ell=1,...,K-1$, i.e., $\Delta t^* = \Delta t^*(\fnto y)$.
With this is mind, let $\wt{\fntb E}(\fnto y)$ denote the reduced state-equation operator, 
\[
	\wt{\fntb E}(\fnto y) \DEF \fntb E(\fnto y, \fnts\gamma(\fnto y), \Delta t^*(\fnto y)).
\]
Given how $\fntb N$ is modified in RRK, it follows that the Jacobian $\frac{d\wt{\fntb E}}{d\fnto y}$ will coincide with what we derived for IDT except at the last $N(s+1)$ rows.
In particular, computing
\begin{align*}
	\frac{d}{d\fnto y} \Big( \fntb A_* \fntb F_K \Big) 
		&=  \fntb A_* \frac{d \fntb F_K}{d\fntb y} 
		+ \fntb A \fntb F_K \paren{\frac{d}{d\fnto y}\frac{\Delta t^*}{\Delta t}},\\
	\frac{d}{d\fnto y} \paren{ \gamma_K \fntb B_*^\top \fntb F_K } 
		&= \gamma_K \fntb B_*^\top \frac{d\fntb F_K}{d\fnto y} 
		+ \fntb B_*^\top \fntb F_K \paren{\frac{d\gamma_K}{d\fnto y}} 
		+ \gamma_K \fntb B^\top \fntb F_K \paren{ \frac{d}{d\fnto y}\frac{\Delta t^*}{\Delta t}},
\end{align*}
will require the derivatives of $\gamma_K$ and $\Delta t^*$ with respect to $\fnto y$.

The derivatives of $\Delta t^*$ can be expressed in terms of derivatives of the relaxation parameters as follows:
\begin{align*}
	\frac{d}{d\fnto y} \frac{\Delta t^*}{\Delta t}(\fnto y)
	&= - \paren{
		\frac{\partial \gamma_1}{\partial \fntb y_1}, \;\, 
		\frac{\partial \gamma_1}{\partial \fntb Y_1}, \;\,
		\cdots,\;\,
		\frac{\partial \gamma_{K-1}}{\partial \fntb y_{K-1}}, \;\,
		\frac{\partial \gamma_{K-1}}{\partial \fntb Y_{K-1}}, \;\,
		\fntb 0_{N(s+2)}^\top
		}\eval{\fnto y}{}\\
	&= - \paren{ \nabla \gamma_1^\top,\;\, \cdots,\;\, \nabla\gamma_{K-1}^\top,\;\, \fntb 0_{N(s+2)}^\top },
\end{align*}
where
\[
	\nabla \gamma_k \DEF \mat{ \nabla_{y}\gamma_k \\ \nabla_{Y}\gamma_k}.
\]

An added complication is that $\Delta t^*$ is also the step size used in $r_K$, thus implying that $\gamma_K$ is dependent on information from all previous time steps.
As before, we can compute $\frac{d\gamma_K}{d\fnto y}$ via implicit differentiation, though we will have to compute partial derivatives of $\gamma_K$ with respect to $\fntb y_{\ell-1}$ and $\fntb Y_\ell$ for all $\ell=1,...,K$;
\begin{align*}
	\frac{\partial \gamma_K}{\partial \fntb y_{\ell-1}} &= -\paren{\frac{\partial r_K}{\partial \gamma}}^{-1} \frac{\partial r_K}{\partial \fntb y_{\ell-1}},\\
	\frac{\partial \gamma_K}{\partial \fntb Y_\ell} &= -\paren{\frac{\partial r_K}{\partial \gamma}}^{-1} \frac{\partial r_K}{\partial \fntb Y_{\ell}}.
\end{align*}
The partial derivatives of $r_K$ with respect to $\gamma$, $\fntb y_{K-1}$ and $\fntb Y_{K}$, evaluated at $(\gamma_K, \fntb y_{K-1}, \fntb Y_K)$ are as given in equation \ref{eq:dr}, with $k\mapsto K$ and $\Delta t\mapsto \Delta t^*$.
Just as in equation \ref{eq:gradgamma}, we use $\nabla_{y}\gamma_K$ and $\nabla_{Y}\gamma_K$ to denote the gradient of $\gamma_K$ with respect to $\fntb y_{K-1}$ and $\fntb Y_{K}$ respectively.
For the remaining $\ell=1,...,K-1$, 
\[
\begin{split}
	\frac{\partial r_K}{\partial \fntb y_{\ell-1}}(\fnts \gamma(\fnto y),\fnto y)
	&= \gamma_K \sum_{i=1}^s b_i \Big( \nabla\eta(\fntb y_{K}) - \nabla\eta(\fntb Y_{K,i})\Big)^\top \fntb F_{K,i} \frac{\partial\Delta t^*}{\partial \fntb y_{\ell-1}}(\fnto y)\\
	&= - \gamma_K \paren{\frac{\Delta t}{\Delta t^*} \frac{\partial r_K}{\partial \gamma}(\fnts\gamma(\fnto y),\fnto y)} (\nabla_{y} \gamma_\ell)^\top
\end{split}
\]
where we have used
\[
	\frac{\partial \Delta t^*}{\partial \fntb y_{\ell-1}}(\fnto y) = -\Delta t \, (\nabla_{y} \gamma_\ell)^\top.
\]
Similarly,
\[
\begin{split}
	\frac{\partial r_K}{\partial \fntb Y_{\ell,j}}(\fnts\gamma(\fnto y),\fnto y)
	&= \gamma_K \sum_{i=1}^s b_i \Big( \nabla\eta(\fntb y_{K}) - \nabla\eta(\fntb Y_{K,i})\Big)^\top \fntb F_{K,i} \frac{\partial\Delta t^*}{\partial \fntb Y_{\ell,j}}(\fnto y)\\
	&= - \gamma_K \paren{\frac{\Delta t}{\Delta t^*} \frac{\partial r_K}{\partial \gamma}(\fnts\gamma(\fnto y),\fnto y)} (\nabla_{Y} \gamma_{\ell,j})^\top.
\end{split}
\]
Thus,
\[
\begin{split}
	\frac{\partial \gamma_K}{\partial \fntb y_{\ell-1}}(\fnto y) &= \gamma_K \frac{\Delta t}{\Delta t^*} (\nabla_y \gamma_{\ell})^\top,\\
	\frac{\partial \gamma_K}{\partial \fntb Y_\ell}(\fnto y) &= \gamma_K \frac{\Delta t}{\Delta t^*} (\nabla_Y \gamma_{\ell})^\top.
\end{split}
\]
Putting it all together, we have 
\[
\begin{split}
	\frac{d\gamma_K}{d\fnto y}(\fnto y) 
	&= \Big( \gamma_K \tfrac{\Delta t}{\Delta t^*}\nabla \gamma_1^\top , \cdots, \gamma_K\tfrac{\Delta t}{\Delta t^*}\nabla\gamma_{K-1}^\top, \nabla \gamma_K^\top, \fntb 0_{N}^\top \Big) \\
	&= -\gamma_K \frac{\Delta t}{\Delta t^*} \paren{\frac{d}{d\fnto y} \frac{\Delta t^*}{\Delta t}(\fnto y)} 
	+ \paren{ \fntb 0_{N(s+1)(K-1)}^\top,\;\, \nabla \gamma_K^\top,\,\; \fntb 0_{N}^\top}.
\end{split}
\]
In summary,
\begin{align*}
	\frac{d}{d\fntb y} \Big(\fntb A_*\fntb F_K \Big)\eval{\fnto y}{}
	&= - \paren{ 
		\fntb A\fntb F_K \nabla\gamma_1^\top,\;\, 
		\cdots, \;\,
		\fntb A\fntb F_K \nabla\gamma_{K-1}^\top,\;\,
		\fntb 0_{sN\times N}, \;\,
		-\fntb A_*\fntb J_K,\;\,
		\fntb 0_{sN\times N}
		}\\
	\frac{d}{d\fnto y} \Big( \gamma_K \fntb B_*^\top \fntb F_K \Big)\eval{\fnto y}{}
	&= \paren{ \fntb 0_{N(s+1)(K-1)}^\top, \;\, \fnts \Gamma_{y,K}, \;\, \fnts\Gamma_{Y,K}, \;\, \fntb 0_{N}^\top}
\end{align*}
with
\[
	\fnts \Gamma_{y,K} =  \fntb B_*^\top \fntb F_{K} (\nabla_{y}\gamma_K)^\top,
	\quad
	\fnts \Gamma_{Y,K} =  \fntb B_*^\top \fntb F_{K} (\nabla_{Y}\gamma_K)^\top.
\]

We jump forward to interpreting the solution of $\wt{\fntb E}'(\fnto y)\fnto \delta = \fnto w$ via forward substitution as a time stepping scheme.
Again, the first $K-1$ steps are as given by IDT.
The last step, as shown in lemma \ref{thm:linIDT}, is derived from the solution of the following system for $\fnts \Delta_{K}$ and $\fnts \delta_K$, with $(\fnts\delta_{\ell-1},\fnts \Delta_{\ell})$ for $\ell=1,...,K-1$ given by the the previous time steps:
{\footnotesize
\[
	\mat{
	\blue{\fntb A \fntb F_K \nabla\gamma_1^\top} & 
	\cdots &
	\blue{\fntb A \fntb F_K \nabla\gamma_{K-1}^\top} &
	-\fntb C &
	\fntb I_{sN} - \fntb A_* \fntb J_K \\ 
	&&& -\fntb I_N \red{\,-\,\fnts \Gamma_{y,K}} & -\gamma_K \fntb B_*^\top \fntb J_K \red{\,-\, \fnts\Gamma_{Y,K}} & \fntb I_N}
	\mat{\mat{\fnts \delta_0 \\ \fnts \Delta_1}\\ \vdots \\ \mat{\fnts \delta_{K-2}\\ \fnts \Delta_{K-1}}\\ \fnts \delta_{K-1}\\ \fnts\Delta_K \\ \fnts \delta_K}
	= \mat{\fntb W_{K}\\ \fntb w_K}
\]}
{
\[
	\implies
	\left\{\begin{split}
		\fnts \Delta_{K,i} 
			&= 
			\fnts \delta_{K-1} 
			+ \Delta t^* \sum_{j=1}^s a_{ij} \fntb J_{K,j} \fnts \Delta_{K,j} 
			\blue{- \rho_* \Delta t\sum_{j=1}^s a_{ij}\fntb F_{K,j} } 
			+ \fntb W_{K,i}, \quad i=1,...,s,\\
		\fnts \delta_{K}
			&= \delta_{K-1} + \gamma_K \Delta t^* \sum_{i=1}^s b_i \fntb J_{K,i} \fnts \Delta_{K,i} 
			\red{\;+\; \fnts \Gamma_{y,K} \fnts \delta_{K-1} + \fnts \Gamma_{Y,K}\fnts \Delta_{K}}
			+ \fntb w_K
	\end{split}\right.
\]}
with scalar
\[
	\rho_* \DEF \sum_{\ell=1}^{K-1}\Big( (\nabla_{y}\gamma_\ell)^\top \fnts\delta_{\ell-1} + (\nabla_{Y}\gamma_\ell)^\top \fnts \Delta_{\ell} \Big).
\]
Similar to before, 
\[
	\fnts \Gamma_{y,K}\fnts \delta_{K-1} + \fnts \Gamma_{Y,K}\fnts \Delta_K 
		= \rho_K \Delta t^*\sum_{i=1}^s b_i \fntb F_{K,i},
\]
with scalar $\rho_K \DEF (\nabla_{y} \gamma_K)^\top\fnts \delta_{K-1}+ (\nabla_{Y} \gamma_K)^\top\fnts \Delta_K$.

The transpose of the Jacobian is
{\footnotesize
\[
\paren{\frac{d\wt{\fntb E}}{d\fnto y}(\fnto y)}^\top = 
\left( \begin{array}{c|cc|cc|c|}
	\multicolumn{1}{c}{\ddots} & 
	&
	\multicolumn{1}{c}{} & 
	& 
	\multicolumn{1}{c}{\vdots}
	\\
	\cline{2-5} &
	\fntb I_N & 
	\multicolumn{1}{c}{-\fntb C^\top} & 
	-\fntb I_N \red{\;-\; \fnts \Gamma_{y,K-1}^\top} & 
	\blue{\nabla_{y}\gamma_{K-1} \fntb F_{K}^\top \fntb A^\top}
	\\
	&
	&
	\multicolumn{1}{c}{\fntb I_{sN}-\fntb J_{K-1}^\top \fntb A_*^\top} & 
	-\gamma_{K-1} \fntb J_{K-1}^\top \fntb B_* \red{\;-\; \fnts \Gamma_{Y,K-1}^\top} & 
	\blue{\nabla_{Y}\gamma_{K-1} \fntb F_{K}^\top \fntb A^\top} 
	\\ 
	\cline{2-6} \multicolumn{1}{c}{}&
	&
	&
	\fntb I_N & \multicolumn{1}{c}{-\fntb C^\top} & 
	-\fntb I_N \red{\;-\; \fnts \Gamma_{y,K}^\top}
	\\
	\multicolumn{1}{c}{}&
	&
	&
	& 
	\multicolumn{1}{c}{\fntb I_{sN}-\fntb J_K^\top \fntb A_*^\top} & 
	-\gamma_K \fntb J_K^\top \fntb B_* \red{\;-\; \fnts \Gamma_{Y,K}^\top} \\ 
	\cline{4-6} \multicolumn{1}{c}{}&
	&
	\multicolumn{1}{c}{}&
	&
	\multicolumn{1}{c}{}& 
	\multicolumn{1}{c}{\fntb I_N}
\end{array}\right)
\]}
We now interpret the solution of $\wt{\fntb E}'(\fnto y)^\top\fnto \lambda = \fnto w$ via back substitution as a time stepping scheme.
Again, the last step (or first step in reverse-time) is as given by the adjoint IDT formulas in lemma \ref{thm:adjIDT}, but with $\Delta t \mapsto \Delta t^*$.
The remaining $K-1$ steps, as given in lemma \ref{thm:adjRRK}, are derived from the solution to the following systems for $\fnts \Lambda_k$ and $\fnts \lambda_{k-1}$, with $\fnts \lambda_k$ given by previous time step and $\fnts \Lambda_K$ given by the last step,
\[
\mat{
	\fntb I_N & 
	-\fntb C^\top & 
	-\fntb I_N \red{\,-\, \fnts \Gamma_{y,k}^\top} &
	\fntb 0_{N\times P} &
	\blue{\nabla_{y}\gamma_{k}^\top \fntb F_K^\top \fntb A^\top }
	\\ 
	& 
	\fntb I_{sN}-\fntb J_{k}^\top \fntb A^\top & 
	-\gamma_k \fntb J_k^\top \fntb B \red{\,-\, \fnts \Gamma_{Y,k}^\top} &
	\fntb 0_{sN\times P} &
	\blue{\nabla_{Y}\gamma_{k}^\top \fntb F_K^\top \fntb A^\top }
}
\mat{\fnts \lambda_{k-1} \\ \fnts \Lambda_k \\ \fnts \lambda_k\\ \vdots \\ \fnts \Lambda_K} = 
\mat{\fntb w_{k-1} \\ \fntb W_k}
\]
where $P=N(s+1)((K-1)-k)$, which gives
\[
\implies \quad
\left\{\begin{split}
	\fnts \lambda_{k-1} &= 
		\sum_{i=1}^s \fnts \Lambda_{k,i} 
		+ \fnts \lambda_k  
		\red{\,+\, \fnts \Gamma_{y,k}^\top \fnts \lambda_k} 
		\blue{\,+\, \xi_* \nabla_{y}\gamma_k^\top }
		+ \fntb w_{k-1}, \\
	\fnts \Lambda_{k,i} &= 
		\Delta t \, \fntb J_{k,i}^\top \sum_{j=1}^s a_{ji} \fnts \Lambda_{k,j} 
		+ \gamma_k b_i \Delta t\, \fntb J_{k,i}^\top \fnts \lambda_k  
		\red{ \,+\, \fnts \Gamma_{Y,k}^\top \fnts \lambda_k} 
		\blue{\,-\, \xi_* \nabla_{Y}\gamma_k^\top}
		+ \fntb W_{k,i}, \quad i=1,...,s,
\end{split}\right.
\]
with scalar
\[
	\xi_* \DEF \Delta t\sum_{i=1}^s \sum_{j=1}^s a_{ji} \fntb F_{K,i} \fnts \Lambda_{K,j} = \fntb F_K^\top \fntb A^\top \fnts \Lambda_K.
\]

\newpage

\bibliography{refs}

\begin{thebibliography}{10}
\expandafter\ifx\csname url\endcsname\relax
  \def\url#1{\texttt{#1}}\fi
\expandafter\ifx\csname urlprefix\endcsname\relax\def\urlprefix{URL }\fi
\expandafter\ifx\csname href\endcsname\relax
  \def\href#1#2{#2} \def\path#1{#1}\fi

\bibitem{Ketcheson_2019}
D.~I. Ketcheson, Relaxation {R}unge--{K}utta methods: Conservation and
  stability for inner-product norms, SIAM Journal on Numerical Analysis 57~(6)
  (2019) 2850--2870.

\bibitem{Ranocha_2020a}
H.~Ranocha, M.~Sayyari, L.~Dalcin, M.~Parsani, D.~I. Ketcheson, Relaxation
  {R}unge--{K}utta methods: Fully discrete explicit entropy-stable schemes for
  the compressible {E}uler and {N}avier--{S}tokes equations, SIAM Journal on
  Scientific Computing 42~(2) (2020) A612--A638.

\bibitem{Ranocha_2020b}
H.~Ranocha, L.~L{\'o}czi, D.~I. Ketcheson, General relaxation methods for
  initial-value problems with application to multistep schemes, Numerische
  Mathematik 146~(4) (2020) 875--906.

\bibitem{Rothauge_2016}
K.~Rothauge, The discrete adjoint method for high-order time-stepping methods,
  Ph.D. thesis, University of British Columbia (2016).

\bibitem{Gunzburger_2002}
M.~D. Gunzburger, Perspectives in flow control and optimization, SIAM, 2002.

\bibitem{Antil_2018}
H.~Antil, D.~Leykekhman, A brief introduction to {PDE}-constrained
  optimization, in: Frontiers in PDE-constrained optimization, Springer, 2018,
  pp. 3--40.

\bibitem{Griewank_2008}
A.~Griewank, A.~Walther, Evaluating derivatives: principles and techniques of
  algorithmic differentiation, SIAM, 2008.

\bibitem{Griewank_2003}
A.~Griewank, A mathematical view of automatic differentiation, Acta Numerica 12
  (2003) 321--398.

\bibitem{Sirkes_1997}
Z.~Sirkes, E.~Tziperman, Finite difference of adjoint or adjoint of finite
  difference?, Monthly weather review 125~(12) (1997) 3373--3378.

\bibitem{Hager_2000}
W.~W. Hager, {R}unge--{K}utta methods in optimal control and the transformed
  adjoint system, Numerische Mathematik 87~(2) (2000) 247--282.

\bibitem{Walther_2007}
A.~Walther, Automatic differentiation of explicit {R}unge-{K}utta methods for
  optimal control, Computational Optimization and Applications 36~(1) (2007)
  83--108.

\bibitem{Sandu_2006}
A.~Sandu, On the properties of {R}unge--{K}utta discrete adjoints, in:
  International Conference on Computational Science, Springer, 2006, pp.
  550--557.

\bibitem{Serna_2016}
J.~M. Sanz-Serna, Symplectic {R}unge--{K}utta schemes for adjoint equations,
  automatic differentiation, optimal control, and more, SIAM review 58~(1)
  (2016) 3--33.

\bibitem{Griewank_1993}
A.~Griewank, C.~Bischof, G.~Corliss, A.~Carle, K.~Williamson, Derivative
  convergence for iterative equation solvers, Optimization methods and software
  2~(3-4) (1993) 321--355.

\bibitem{Eberhard_1999}
P.~Eberhard, C.~Bischof, Automatic differentiation of numerical integration
  algorithms, Mathematics of Computation 68~(226) (1999) 717--731.

\bibitem{Alexe_2009}
M.~Alexe, A.~Sandu, On the discrete adjoints of adaptive time stepping
  algorithms, Journal of Computational and Applied Mathematics 233~(4) (2009)
  1005--1020.

\bibitem{Hairer_2006}
E.~Hairer, C.~Lubich, G.~Wanner, Geometric numerical integration:
  structure-preserving algorithms for ordinary differential equations, Vol.~31,
  Springer Science \& Business Media, 2006.

\bibitem{Hernandez_2018}
D.~M. Hernandez, E.~Bertschinger, Time-symmetric integration in astrophysics,
  Monthly Notices of the Royal Astronomical Society 475~(4) (2018) 5570--5584.

\bibitem{Ranocha_2019_code}
H.~Ranocha, D.~I. Ketcheson, {ConvexRelaxationRungeKutta}. {R}elaxation
  {R}unge--{K}utta methods for convex functionals,
  \url{https://github.com/ranocha/ConvexRelaxationRungeKutta} (05 2019).
\newblock \href {https://doi.org/10.5281/zenodo.3066518}
  {\path{doi:10.5281/zenodo.3066518}}.

\bibitem{Chan_2018}
J.~Chan, On discretely entropy conservative and entropy stable discontinuous
  {G}alerkin methods, Journal of Computational Physics 362 (2018) 346--374.

\bibitem{Chan_2020}
J.~Chan, C.~Taylor, Explicit {J}acobian matrix formulas for entropy stable
  summation-by-parts schemes, arXiv preprint arXiv:2006.07504 (2020).

\bibitem{Shu_1988}
C.-W. Shu, S.~Osher, Efficient implementation of essentially non-oscillatory
  shock-capturing schemes, Journal of computational physics 77~(2) (1988)
  439--471.

\bibitem{Persson_2012}
P.-O. Persson, High-order {N}avier--{S}tokes simulations using a sparse
  line-based discontinuous {G}alerkin method, in: 50th AIAA Aerospace Sciences
  Meeting including the New Horizons Forum and Aerospace Exposition, 2012, p.
  456.

\bibitem{Alexander_1977}
R.~Alexander, Diagonally implicit {R}unge--{K}utta methods for stiff {ODE}’s,
  SIAM Journal on Numerical Analysis 14~(6) (1977) 1006--1021.

\bibitem{Wilcox_2015}
L.~C. Wilcox, G.~Stadler, T.~Bui-Thanh, O.~Ghattas, Discretely exact
  derivatives for hyperbolic {PDE}-constrained optimization problems
  discretized by the discontinuous {G}alerkin method, Journal of Scientific
  Computing 63~(1) (2015) 138--162.

\end{thebibliography}

\end{document}